\newcommand{\numberset}{\mathbb}
\newcommand{\N}{\numberset{N}}
\newcommand{\R}{\numberset{R}}
\newcommand{\F}{\numberset{F}}
\newcommand{\Mat}{\operatorname{Mat}}
\newcommand{\mC}{\mathcal{C}}
\newcommand{\Fq}{\F_q}
\newcommand{\rk}{\textnormal{rk}}
\newcommand{\bfn}{\mathbf {n}}
\newcommand{\bfm}{\mathbf {m}}
\newcommand{\spac}{\Mat(\bfn,\bfm,\Fq)}
\newcommand{\srk}{\mathrm{srk}}
\newcommand{\st}{\,:\,}
\newcommand{\floor}[1]{{\left\lfloor{#1}\right\rfloor}}
\newcommand{\ceil}[1]{{\left\lceil{#1}\right\rceil}}
\newlength{\mynodespace}
\newcommand{\erase}[2][says]
\newtheorem{theorem}{Theorem}
\newtheorem{lemma}[theorem]{Lemma}
\newtheorem{corollary}[theorem]{Corollary}
\newtheorem{proposition}[theorem]{Proposition}
\theoremstyle{definition}
\newtheorem{remark}[theorem]{Remark}
\newtheorem{definition}[theorem]{Definition}
\newtheorem{notation}{Notation}
\title{{Eigenvalue Bounds for Sum-Rank-Metric Codes}}
\author{Aida Abiad\thanks{\texttt{a.abiad.monge@tue.nl},  Department of Mathematics and Computer Science, Eindhoven University of Technology, the Netherlands\\ Department of Mathematics: Analysis, Logic and Discrete Mathematics, Ghent University, Belgium\\ Department of Mathematics and Data Science, Vrije Universiteit Brussel, Belgium}\quad Antonina P. Khramova\thanks{\texttt{a.khramova@tue.nl}, Department of Mathematics and Computer Science, Eindhoven University of Technology, the Netherlands}\quad Alberto Ravagnani\thanks{\texttt{a.ravagnani@tue.nl}, Department of Mathematics and Computer Science, Eindhoven University of Technology, the Netherlands}}
\date{}
\begin{document}

\maketitle

\begin{abstract}
We consider the problem of deriving upper bounds on the parameters of sum-rank-metric codes, with focus on their dimension and block length.
The sum-rank metric is a combination of the Hamming and the rank metric, and most of the available techniques to investigate it seem to be unable to fully capture 
its hybrid nature.
In this paper, we introduce a new approach 
based on sum-rank-metric graphs, in which the vertices are tuples of matrices over a finite field, and where two such tuples are connected when their sum-rank distance is equal to one.
We establish various structural properties of sum-rank-metric graphs and combine them with eigenvalue techniques to obtain bounds on the cardinality of sum-rank-metric codes.
The bounds we derive improve on the best known bounds for several choices of the parameters. While our bounds 
are explicit only for small values of the minimum distance, they clearly indicate that spectral theory is able to capture the nature of the sum-rank-metric better than the currently available methods. They also allow us to establish new non-existence results for (possibly nonlinear) MSRD codes.

\bigskip

\noindent \textbf{Keywords:}  sum-rank-metric code, bound, network coding, graph, spectral graph theory, $k$-independence number, eigenvalues, MSRD code



\end{abstract}

\bigskip

\section{Introduction}

In the past decade, \textit{sum-rank-metric codes} have featured prominently in the coding theory literature, mainly in connection with multi-shot network coding; see \cite{byrne2021fundamental,MP18,MP20a,MPK19,MPK19a,PRR20,neri2022twisted,martinez2022general,neri2023geometry,bartz2021decoding,ott2022covering,byrne2022anticodes,chen2023new,borello2023geometric,camps2022optimal} among many others.
In that context, sum-rank-metric codes can significantly reduce the alphabet size with respect to schemes based on ordinary rank-metric codes.
Sum-rank-metric codes
have been proposed for other applications as well, including 
space-time coding and 
distributed storage;
see for instance~\cite{ShKsch20,ElGH03,LuKu05,MPK19a}. 

A sum-rank-metric code is a subspace of the Cartesian product of (possibly different) matrix spaces over a finite field $\F_q$. The \textit{sum-rank} of a matrix tuple is the sum of the ranks of its components, and the \textit{sum-rank distance} between matrix tuples is the sum-rank of their difference. A code has good correction capability when it has large cardinality and all distinct matrix tuples are at large 
sum-rank distance from each other.

When investigating the sum-rank metric, it is often helpful to observe that it is a \textit{hybrid} between the 
rank and the Hamming metric; see~\cite{macwilliams1977theory} for a general reference about the latter. In fact, sum-rank-metric codes exhibit a behaviour
similar to Hamming-metric codes when all the the matrix spaces in the Cartesian product are small, and similar to rank-metric codes when the number of matrix spaces in the product is small. We are however not aware of any rigorous formalization of this behavior. Furthermore, the techniques that have so far been proposed to investigate sum-rank-metric codes seem to perform well only in one of the two extreme cases (either small matrix spaces, or few blocks).

Even though 
several mathematical properties of sum-rank-metric codes have been discovered in the past years (see all the references above), it is still widely unknown what the largest cardinality of a sum-rank-metric code with given correction capability can be. This number has been computed in various special cases, most notably when the size of the underlying field size is large and when the matrix spaces are all of the same type. 
Several bounds and constructions of sum-rank-metric codes are available in the literature, most of which are surveyed or established in~\cite{byrne2021fundamental}.

In this paper, we introduce a novel eigenvalue method to establish upper bounds on the cardinality of a sum-rank-metric code with given correction capability. 
The method applies to any sum-rank-metric ambient space over any field. In fact, our results even apply to nonlinear codes.
Our approach uses algebraic graph theory and linear optimization: we construct a graph that naturally encodes the sum-rank metric, and we show that the size of the largest codes 
coincides with a generalization of the independence number of such graph (the so-called $k$-independence number).
We then use results from spectral graph theory and linear programming~\cite{ACF2019,F2020} (in particular, the so-called \textit{Ratio-Type} bound on the $k$-independence number, and a linear optimization method to calculate it) to estimate this quantity and derive the new eigenvalue bounds.

It turns out that our spectral bounds outperform for several instances the best bounds currently available.
Moreover, our bounds can be made explicit for small values of the minimum distance ($d=3$ and $d=4$), and the results indicate very clearly that spectral methods are able to capture the nature of the sum-rank metric better than the known methods. For general values of the minimum distance $d$, a linear optimization method is provided in order to calculate the best value of the general eigenvalue bound.
Our bounds heavily rely on spectral graph theory techniques and do not appear to have any elementary derivation.

As a by-product, this paper also initiates the study of sum-rank-metric graphs and their mathematical properties.  
More precisely, we prove that said graphs are vertex-transitive, $\delta$-regular (where we explicitly compute the value of $\delta$), and walk-regular. Note that some of these properties are
indeed 
needed for our eigenvalue machinery to work. We also characterize when sum-rank-metric graphs are distance-regular. Moreover, we  compute the entire spectrum of sum-rank-metric graphs, which is the key ingredient needed for using our bounds on the cardinality of sum-rank-metric codes.

\paragraph{Outline.} 
This paper is organized as follows. In Section~\ref{sec:2} we define the sum-rank metric and sum-rank-metric codes, surveying the known bounds for their cardinality.
In Section~\ref{sec:3} we introduce and study the combinatorial properties of sum-rank-metric graphs.
In Section~\ref{sec:4} we present our eigenvalue bounds for sum-rank-metric graphs, and in Section~\ref{sec:5} we compare them with the state of the art.
Finally, in Section~\ref{sec:6} we show the implications of our bounds for the block length of MSRD codes, obtaining various
non-existence results for (possibly nonlinear) MSRD codes.


\bigskip

\section{Sum-Rank-Metric Codes}
\label{sec:srmc}
\label{sec:2}

Throughout this paper, 
$q$ denotes a prime power and $\Fq$ is the finite field with $q$ elements.
We let $t$ be a positive integer
and $\bfn=(n_1,\ldots,n_t)$, $\bfm=(m_1,\ldots,m_t) \in~\N^t$ are
ordered tuples with $m_1 \geq m_2 \geq \cdots \geq m_t$ and $m_i\geq n_i$ for all $i\in\{1,\dots,t\}$.
We also put 
$N\coloneqq n_1+\cdots+n_t$ for ease of notation.

\begin{definition}
    The \textbf{sum-rank-metric space} is the direct sum
$$\spac\coloneqq \bigoplus_{i=1}^t \F_q^{n_i \times m_i},$$
which is an $\F_q$-linear space itself.
The \textbf{sum-rank}
of $(X_1,\ldots, X_t)\in\spac$
is 
$\srk(X) = \sum_{i=1}^t \rk(X_i)$.
The \textbf{sum-rank distance} between $X,Y \in \spac$ 
is $\srk(X - Y)$.  
\end{definition}

It is known and very easy to see
that via $(X, Y ) \mapsto \srk(X - Y)$ is indeed a distance, i.e., it is symmetric, positive, meaning that for any $X$ and $Y$ $\srk(X-Y)\geq 0$ with $\srk(X-Y) = 0$ if and only if $X=Y$, and also satisfies the triangular inequality. 

\begin{remark} \label{rem:reduce} Note that the sum-rank distance is a generalization of both rank distance, which is reached for $t=1$, and the Hamming distance, which is reached for $n_i=m_i=1$ for $i\in\{1,\dots,t\}$.
\end{remark}

The key players of this paper are sum-rank-metric codes, defined as follows.

\begin{definition}
A (\textbf{sum-rank-metric}) \textbf{code} is a non-empty
subset
$\mC \subseteq \spac$.
The \textbf{minimum} (\textbf{sum-rank}) \textbf{distance} of  a code $\mC$ with $|\mC| \ge 2$ is defined via $$\srk(\mC)\coloneqq\min\{\srk(X-Y) \st X, Y \in \mC, \, X \neq Y\}.$$
In this context, the elements of $\mC$ are called \textbf{codewords}.
\end{definition}

This paper focuses on the largest cardinality a sum-rank-metric code can have for the given minimum distance. 

\begin{notation}
    We let
    $A_q(\bfn,\bfm,d)$ to denote the largest cardinality of a sum-rank-metric code in $\spac$ with the property that
    any two distinct codewords 
    of $\mC$ are at sum-rank distance at least $d$ from each other.
\end{notation}

In our approach, we will study sum-rank-metric codes using techniques from spectral graph theory.
We thus introduce the following object.

\begin{definition}
    The \textbf{sum-rank-metric graph} is the graph
    whose vertices are the  elements of
$\spac$ and whose edges are the pairs $(X,Y) \in\spac^2$ with the property that  $\srk(X-Y) = 1$.
We denote it by $\Gamma(\spac)$.
\end{definition}

In the case where $t=1$, $\bfn=(n)$, and $\bfm=(m)$, the notation $\Mat(n,m,\F_q)$ and $\Gamma(\Mat(n,m,\F_q))$ will sometimes be used instead of $\Mat(\bfn,\bfm,\F_q)$ and $\Gamma(\Mat(\bfn,\bfm,\F_q))$. Such graph is called \textbf{rank-metric graph}, which is also known in the literature as \textbf{bilinear forms graph} (e.g. see~\cite{BCNbookDRG}). For the rank metric, the Singleton bound is always met~\cite{D1978}, so there has not been a combinatorial investigation of the graphs associated with it.

Even if the sum-rank-metric graph is defined by capturing only the pairs $(X,Y)$ at distance 1, it encodes all distances between pairs of tuples of matrices. This fact follows from the following result.

\begin{proposition}[see~\text{\cite[Proposition 4.3]{byrne2022anticodes}}]
\label{prop:alberto}
The geodesic distance of $\Gamma(\spac)$ coincides with the sum-rank distance.
\end{proposition}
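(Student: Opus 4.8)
The plan is to establish the two inequalities $d_\Gamma(X,Y)\ge\srk(X-Y)$ and $d_\Gamma(X,Y)\le\srk(X-Y)$ separately, where $d_\Gamma$ denotes the geodesic distance in $\Gamma(\spac)$. I will use throughout that both quantities depend only on the difference: setting $Z\coloneqq Y-X$, one has $\srk(X-Y)=\srk(Z)$, while a walk $X=W_0,W_1,\dots,W_\ell=Y$ in $\Gamma(\spac)$ is the same datum as a list of increments $D_j\coloneqq W_j-W_{j-1}$, each of sum-rank one, whose sum is $Z$. In these terms the proposition reduces to the single claim that the least number of sum-rank-one tuples summing to $Z$ equals $\srk(Z)$.

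For the lower bound I would simply invoke the triangle inequality for the sum-rank distance, already recorded above as part of its being a metric. Given any walk from $X$ to $Y$ of length $\ell$ with increments $D_1,\dots,D_\ell$, subadditivity yields
$$\srk(Z)=\srk\!\left(\sum_{j=1}^{\ell}D_j\right)\le\sum_{j=1}^{\ell}\srk(D_j)=\ell,$$
and minimising over all walks gives $\srk(X-Y)\le d_\Gamma(X,Y)$. This direction is immediate and essentially forced by the metric axioms.

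The content of the argument is the upper bound, for which I must produce a walk of length exactly $s\coloneqq\srk(Z)$. Writing $Z=(Z_1,\dots,Z_t)$ with $r_i\coloneqq\rk(Z_i)$ and $s=\sum_{i=1}^t r_i$, I would use the rank factorisation $Z_i=A_iB_i$ over $\Fq$, with $A_i\in\F_q^{n_i\times r_i}$ and $B_i\in\F_q^{r_i\times m_i}$ of full rank $r_i$; expanding the product column-by-row writes $Z_i$ as a sum of exactly $r_i$ rank-one matrices. Regarding each such rank-one matrix as a tuple supported on block $i$ and zero elsewhere, I obtain $s$ tuples of sum-rank one summing to $Z$; their partial sums, translated by $X$, trace a walk from $X$ to $Y$ whose every consecutive difference has sum-rank one. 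Hence $d_\Gamma(X,Y)\le s=\srk(X-Y)$, and together with the lower bound this gives the claimed equality; as a by-product the same walk shows that $\Gamma(\spac)$ is connected, so that $d_\Gamma$ is everywhere finite.

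I expect the only genuine obstacle to lie in this upper-bound construction, and within it the sole point needing care is that a rank-$r_i$ matrix decomposes into exactly $r_i$ rank-one summands --- a standard fact over any field requiring no metric-specific input. It is worth stressing that the intermediate vertices of the walk are allowed to be arbitrary elements of $\spac$, so no control over the ranks of the partial sums is required: only that each single increment has sum-rank one. This is what keeps the whole construction elementary.
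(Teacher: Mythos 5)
Your proposal is correct, and both inequalities are carried out without gaps: the lower bound is exactly the subadditivity of $\srk$ applied to the increments of a walk, and the upper bound correctly reduces to the fact that a rank-$r_i$ matrix over $\Fq$ is a sum of exactly $r_i$ rank-one matrices (via the factorisation $Z_i=A_iB_i$ with nonzero columns and rows, so every increment has sum-rank exactly one and consecutive vertices of your walk are genuinely adjacent). Note, however, that the paper itself offers no proof of this statement: it is imported wholesale as Proposition 4.3 of the cited reference \cite{byrne2022anticodes}, so the honest comparison is with that external argument, which proceeds along the same two lines (triangle inequality for one direction, decomposition into sum-rank-one elements for the other). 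What your write-up buys relative to the paper is self-containedness --- the equality $A_q(\bfn,\bfm,d)=\alpha_{d-1}\bigl(\Gamma(\spac)\bigr)$ of Corollary~\ref{coro:equivalence}, on which everything later rests, would then not depend on an outside citation --- together with the explicit observation, worth keeping, that no control on the ranks of the partial sums is needed and that connectivity of $\Gamma(\spac)$ falls out of the same construction.
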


\subsection{Overview of Known Bounds}\label{sec:code-bounds}


In this subsection we
concisely recall the main bounds currently known on sum-rank-metric codes.
To our best knowledge, 
the overview
offered in 
\cite{byrne2021fundamental}
is up to date and we refer to that for more details.
 At the end of the subsection we also present a new bound based on Ramsey theory. While the bound is not expected to be good in general, it shows an interesting connection between coding theory and Ramsey theory.

In~\cite{byrne2021fundamental}, the authors
observe that if $\mC$ is a sum-rank-metric code in $\spac$ of size at least $2$ with $\srk(\mC) \ge d$,
then the size of $\mC$ is upper bounded by the size of the largest Hamming-metric code over field $\F_{q^m}$ of length $N$ and minimum distance at least $d$, where
$m=\max\{m_1,\dots,m_t\}$.
This observation implies the following bounds.

\begin{theorem}[see~\text{\cite[Theorem III.1]{byrne2021fundamental}}]\label{thm:induced} Let $m=\max\{m_1,\dots,m_t\}$ and let $\mC\subseteq\spac$ be a sum-rank-metric code with $|\mC|\geq 2$ and $\srk(\mC) \ge d$. The following hold.
\begin{center}
    \begin{tabular}{ll}
     \textbf{Induced Singleton bound:} & $|\mC|\leq q^{m(N-d+1)}$, \\[0.5cm]
     \textbf{Induced Hamming bound:} & $|\mC|\leq \floor{\frac{q^{mN}}{\sum_{s=0}^{\floor{(d-1)/2}}\binom{N}{s}(q^m-1)^s}}$, \\[0.5cm]
     \textbf{Induced Plotkin bound:} & $|\mC|\leq \floor{\frac{q^{m}d}{q^md-(q^m-1)N}}$ if $d>(q^m-1)N/q^m$, \\[0.5cm]
     \textbf{Induced Elias bound:} & $|\mC|\leq \floor{\frac{Nd(q^m-1)}{q^mw^2-2Nw(q^m-1)+(q^m-1)Nd}\cdot\frac{q^{mN}}{V_w(\F^N_{q^m})}}$. \\[0.5cm]
\end{tabular}
\end{center}
In the Induced Elias bound, $w$ is any integer between $0$ and $N(q^m-1)/q^m$ such that the denominator is positive, and $V_w(\F_{q^m}^N)=\sum_{i=0}^w {N\choose i}(q^m-1)^i$, i.e. the cardinality of any ball of radius $w$ in $\F_{q^m}^N$ with respect to the Hamming distance.
\end{theorem}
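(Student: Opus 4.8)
The plan is to prove all four bounds simultaneously by reducing a sum-rank-metric code to a classical Hamming-metric code over the extension field $\Fm$, and then invoking the textbook Singleton, sphere-packing, Plotkin, and Elias bounds for $q^m$-ary codes of length $N$. This is precisely the strategy behind the observation recalled just before the statement, so the substance lies in making that observation explicit and checking that the four translated bounds coincide with the stated formulas.

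First I would construct the reduction map. Fix an $\F_q$-basis of $\Fm$, which identifies $\Fm$ with $\F_q^m$ as $\F_q$-vector spaces. Since $m_i \le m$ for every $i$, each row of a block $X_i \in \F_q^{n_i\times m_i}$ can be regarded (after padding with zeros) as an element of $\F_q^m \cong \Fm$. Sending each of the $n_i$ rows of $X_i$ to the corresponding field element and concatenating over $i=1,\dots,t$ yields an $\F_q$-linear injection $\phi\colon\spac\to\Fm^N$. The key inequality is that the $\F_q$-rank of a matrix is at most its number of nonzero rows; applied blockwise this gives $\srk(X)\le \wt_{\HH}(\phi(X))$, where $\wt_{\HH}$ denotes the Hamming weight in $\Fm^N$. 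By $\F_q$-linearity of $\phi$ we then obtain, for distinct $X,Y\in\mC$,
\[
  \wt_{\HH}\big(\phi(X)-\phi(Y)\big)=\wt_{\HH}\big(\phi(X-Y)\big)\ge \srk(X-Y)\ge d .
\]
Since $\phi$ is injective, $\phi(\mC)$ is a $q^m$-ary code of length $N$, cardinality $|\mC|$, and minimum Hamming distance at least $d$. Hence $|\mC|$ is bounded by the largest size of such a Hamming-metric code.

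Next I would simply quote the four classical Hamming-metric bounds for $q^m$-ary codes of length $N$ and minimum distance $d$ (see~\cite{macwilliams1977theory}) and verify they match the stated expressions. The Singleton bound gives $|\mC|\le (q^m)^{N-d+1}=q^{m(N-d+1)}$; the sphere-packing bound gives $|\mC|\le q^{mN}$ divided by the volume $\sum_{s=0}^{\floor{(d-1)/2}}\binom{N}{s}(q^m-1)^s$ of a Hamming ball of radius $\floor{(d-1)/2}$; the Plotkin bound, valid exactly when $d>(q^m-1)N/q^m$, gives $|\mC|\le q^m d/\big(q^m d-(q^m-1)N\big)$; and the Elias bound, after clearing the factor $1/q^m$ from the standard form (using $1-1/q^m=(q^m-1)/q^m$), yields the stated fraction times $q^{mN}/V_w(\Fm^N)$. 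In each case the cardinality is an integer, so the real-valued right-hand sides may be replaced by their floors.

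The only genuinely substantive step is the reduction, i.e.\ the inequality $\srk(X)\le\wt_{\HH}(\phi(X))$ together with the injectivity and $\F_q$-linearity of $\phi$; once this is in place, every bound is a direct specialization of a known $q^m$-ary bound. The hard part will therefore be purely bookkeeping: matching the normalization of the Elias bound to the form displayed in the statement (the algebraic rearrangement that turns the $(1-1/q^m)$-factors into the denominator $q^m w^2-2Nw(q^m-1)+(q^m-1)Nd$), and confirming the positivity conditions on the denominators in the Plotkin and Elias cases, which are exactly the hypotheses under which the classical bounds are stated.
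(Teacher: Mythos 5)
Your proposal is correct and follows essentially the same route as the paper, which simply invokes the observation from \cite[Theorem III.1]{byrne2021fundamental} that a sum-rank-metric code in $\spac$ with minimum distance $d$ embeds (via exactly the row-wise, zero-padded identification $\phi\colon\spac\to\F_{q^m}^N$ you describe, using $\rk(X_i)\le\#\{\text{nonzero rows of }X_i\}$) into a $q^m$-ary Hamming-metric code of length $N$ and minimum distance at least $d$, after which the four classical bounds apply verbatim. Your explicit verification of the reduction and of the algebraic form of the Elias bound fills in the bookkeeping that the paper leaves to the citation.
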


The following results are also taken from~\cite{byrne2021fundamental}, but are not induced by Hamming-metric codes.
Each of the bounds has a separate proof, for which the reader is referred to~\cite{byrne2021fundamental}.

\begin{theorem}[see \text{\cite[Theorems III.2, III.6--III.8]{byrne2021fundamental}}]\label{thm:non-induced} Let $\mC\subseteq\spac$ be a code with $|\mC|\geq2$ and $\srk(\mC)\ge d$. Let $j$ and $\delta$ be the unique integers satisfying $d-1=\sum_{i=1}^{j-1} n_i + \delta$ and $0\leq\delta\leq n_j-1$. Let $\ell\leq t-1$ and $\delta'\leq n_{\ell+1}-1$ be the unique positive integers such that $\smash{d-3=\sum_{j=1}^\ell n_j+\delta'}$.
Define $\smash{\bfn'=(n_{\ell+1}-\delta',n_{\ell+2},\dots,n_t)}$ and $\bfm'=(m_{\ell+1},m_{\ell+2},\dots,m_t)$. Finally, let $Q=\sum_{i=1}^t q^{-m_i}$.
The following hold.
\begin{center}
    \begin{tabular}{ll}
     \textbf{Singleton bound:} & $|\mC|\leq q^{\sum_{i=j}^t m_in_i-m_j\delta}$, \\[0.5cm]
     \textbf{Sphere-Packing bound:} & $|\mC|\leq\floor{\frac{|\spac|}{V_r(\spac)}}$, where $r=\floor{(d-1)/2}$, \\[0.5cm]
     \textbf{Projective Sphere-Packing bound:} & $|\mC|\leq\floor{\frac{|\Mat(\bfn',\bfm',\Fq)|}{V_1(\Mat(\bfn',\bfm',\Fq))}}$ if $3\leq d\leq N$, \\[0.5cm]
     \textbf{Total Distance bound:} & $|\mC|\leq\frac{d-N+t}{d-N+Q}$ if $d>N-Q$. \\[0.5cm]
\end{tabular}
\end{center}
In the Sphere-Packing and Projective Sphere-Packing bounds, the denominator
denotes the cardinality of any ball in the sum-rank-metric of radius $r$. For example,
$$V_r(\spac)=|\{(X_1,\dots,X_t)\in\spac \mid \srk(X_1,\dots,X_t)\leq r\}|,$$
and a closed formula is provided in~\cite{byrne2021fundamental}.
\end{theorem}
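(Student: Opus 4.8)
The statement collects four bounds of classical flavour, so the plan is to treat each one separately, exploiting two structural features of the sum-rank metric: it is a genuine metric (as recalled after the definition) and the ambient space $\spac$ is homogeneous under the translations $X\mapsto X+Z$, so that every metric ball has the same cardinality. I would prove the Singleton and Projective Sphere-Packing bounds by \emph{puncturing}, the Sphere-Packing bound by a volume argument, and the Total Distance bound by a Plotkin-type double count.

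For the \textbf{Singleton bound}, I would introduce the projection $\pi\colon\spac\to\Mat(\bfn'',\bfm'',\Fq)$ that forgets the blocks $1,\dots,j-1$ entirely and deletes $\delta$ rows from block $j$, where $\bfn''=(n_j-\delta,n_{j+1},\dots,n_t)$ and $\bfm''=(m_j,\dots,m_t)$. The key point is that $\pi$ is injective on $\mC$: if $X,Y\in\mC$ satisfy $\pi(X)=\pi(Y)$, then $X-Y$ is supported on the deleted coordinates, whence
$$\srk(X-Y)\le \sum_{i=1}^{j-1}n_i+\delta = d-1<d,$$
forcing $X=Y$. Injectivity then gives $|\mC|\le|\Mat(\bfn'',\bfm'',\Fq)|=q^{\sum_{i=j}^t m_in_i-m_j\delta}$, which is exactly the claimed bound.

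The \textbf{Sphere-Packing bound} is the standard packing argument: for $r=\floor{(d-1)/2}$ the balls of radius $r$ centred at distinct codewords are pairwise disjoint, since a common point would place two codewords within sum-rank distance $2r\le d-1$ of each other, contradicting $\srk(\mC)\ge d$. As all balls have cardinality $V_r(\spac)$ by homogeneity, summing volumes yields $|\mC|\,V_r(\spac)\le|\spac|$. For the \textbf{Projective Sphere-Packing bound} I would first puncture away $d-3$ units of capacity (the blocks $1,\dots,\ell$ and $\delta'$ rows of block $\ell+1$), obtaining a code in $\Mat(\bfn',\bfm',\Fq)$; exactly as above this projection is injective and the projected code has minimum distance at least $d-(d-3)=3$. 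Applying the Sphere-Packing bound with $r=1$ to this smaller code gives $|\mC|\le\floor{|\Mat(\bfn',\bfm',\Fq)|/V_1(\Mat(\bfn',\bfm',\Fq))}$.

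Finally, for the \textbf{Total Distance bound} I would double count the total pairwise distance $S=\sum_{X,Y\in\mC}\srk(X-Y)$. The minimum-distance hypothesis gives the lower bound $S\ge|\mC|(|\mC|-1)d$, so it remains to bound $S$ from above block by block, writing $S=\sum_{i=1}^t\sum_{X,Y}\rk(X_i-Y_i)$ and estimating the maximal average rank distance of a configuration of $|\mC|$ matrices in a single block $\F_q^{n_i\times m_i}$. I expect this per-block estimate to be the main obstacle: the crude bound $\rk\le n_i$ only yields a factor $1-q^{-n_im_i}$, whereas the sharp constant involves $q^{-m_i}$ and hence $Q$. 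The sharp inequality
$$\sum_{X,Y}\rk(X_i-Y_i)\le |\mC|^2\bigl(n_i-q^{-m_i}\bigr)-|\mC|(n_i-1)$$
is tight — it is attained by an equidistant, MRD-type configuration of $q^{m_i}$ matrices — and must exploit the rank structure rather than mere distinctness; establishing it is the crux of the argument. Summing over $i$, using $\sum_i(n_i-q^{-m_i})=N-Q$ and $\sum_i(n_i-1)=N-t$, and combining with the lower bound yields $|\mC|(d-N+Q)\le d-N+t$, i.e. the stated bound whenever $d>N-Q$.
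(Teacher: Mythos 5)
A preliminary remark: the paper does not prove this theorem at all --- it is quoted from \cite{byrne2021fundamental}, with the reader referred there for the (separate) proofs --- so your attempt must be judged on its own merits. Your arguments for the first three bounds are correct and complete, and they are essentially the standard ones: the Singleton bound follows from injectivity of the puncturing map on $\mC$ (a difference supported on the first $j-1$ blocks and on $\delta$ rows of block $j$ has sum-rank at most $\sum_{i=1}^{j-1}n_i+\delta=d-1<d$); the Sphere-Packing bound from disjointness of radius-$r$ balls together with translation invariance of the metric; and the Projective Sphere-Packing bound from puncturing away $d-3$ units of sum-rank, which leaves an equally large code of minimum distance at least $3$, to which the radius-$1$ packing argument applies.

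The genuine gap is in the Total Distance bound. Your double-counting skeleton is right, and the per-block inequality you isolate, namely that any $M$ matrices $Z_1,\dots,Z_M\in\F_q^{n_i\times m_i}$ satisfy $\sum_{a,b}\rk(Z_a-Z_b)\le M^2(n_i-q^{-m_i})-M(n_i-1)$, is both true and exactly what is needed (a cruder bound such as $M^2n_i(1-q^{-m_i})$ alone would not produce the stated constant $Q$). But you state it without proof and explicitly concede it is ``the crux of the argument''; a proof whose crux is missing is not a proof. The lemma can in fact be established along the following lines, so your route is viable. If $M\le q^{m_i}$, the inequality follows from the trivial estimate $\rk\le n_i$ on the $M(M-1)$ off-diagonal pairs, since your right-hand side exceeds $M(M-1)n_i$ by $M(1-Mq^{-m_i})\ge0$. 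If $M\ge q^{m_i}$, count triples $(v,a,b)$ with $v\in\F_q^{1\times n_i}\setminus\{0\}$, $a\ne b$ and $vZ_a=vZ_b$: for fixed $v$, Cauchy--Schwarz over the at most $q^{m_i}$ possible values of $vZ_a$ gives at least $M^2q^{-m_i}-M$ such pairs, while for fixed $(a,b)$ the number of admissible $v$ is $q^{n_i-\rk(Z_a-Z_b)}-1$; combining these with the convexity estimate $q^s-1\le\frac{s}{n_i}\left(q^{n_i}-1\right)$ for integers $0\le s\le n_i$ and simplifying yields $\sum_{a\ne b}\rk(Z_a-Z_b)\le M^2n_i(1-q^{-m_i})$, which is at most your right-hand side precisely because $(n_i-1)M(Mq^{-m_i}-1)\ge0$ in this regime. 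With this lemma in place, your summation over blocks and the comparison with the lower bound $M(M-1)d$ do yield the stated bound.
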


Using the same connection to the size of the largest Hamming-metric code, we can extend the Ramsey-type bound given in~\cite[Theorem 2.1]{CFHMSV22} to the sum-rank-metric case. Let $R(k;r,s)$ be a \textbf{set-coloring Ramsey number}, which is the minimum positive integer $n$ such that in every edge-coloring $\chi:E(K_n)\to{r\choose s}$, where each edge is mapped onto a set of $s$ colors chosen from a palette of $r$ colors, there exists a monochromatic $k$-clique.

The following theorem establishes the first connection between sum-rank-metric codes and set-coloring Ramsey numbers.

\begin{theorem} Let $m=\max\{m_1,\dots,m_t\}$. Let $\mC\subseteq\spac$ be a sum-rank-metric code with $|\mC|\geq 2$ and $\srk(\mC) \ge d$ with $d<N$. Also, let $a$ and $b$ be positive integers such that $b<a$. Finally, let $r=aN$ and $s=db$. If $q^m=R(k;a,b)-1$
then there exist constants $c,c'>0$ such that, for any integers $k,r,s$ with $k\geq3$ and $r>s\geq1$ we have $$|\mC|<R(k;r,s)\leq 2^{c'k(r-s)^2r^{-1}\log\left(\frac{r}{\min(s,r-s)}\right)}.$$
\end{theorem}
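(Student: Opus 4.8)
The plan is to transport the statement to the Hamming-metric setting, where the corresponding estimate is exactly~\cite[Theorem 2.1]{CFHMSV22}, by means of the alphabet extension already used to derive the induced bounds of Theorem~\ref{thm:induced}. Setting $m=\max\{m_1,\dots,m_t\}$, I would first read each block $X_i\in\Fq^{n_i\times m_i}$ as the list of its $n_i$ rows, each of which is a vector of $\Fq^{m_i}\subseteq\Fq^{m}\cong\Fm$, and concatenate these over $i=1,\dots,t$ to obtain an injective $\Fq$-linear map $\phi\colon\spac\to\Fm^{N}$. Since the rank of a matrix never exceeds its number of nonzero rows, one has $\rk(X_i-Y_i)\le\#\{\text{nonzero rows of }X_i-Y_i\}$ for every $i$, and summing over $i$ gives $\srk(X-Y)\le d_{\mathrm H}(\phi(X),\phi(Y))$, the Hamming distance over $\Fm$. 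Consequently $\phi(\mC)$ is a length-$N$ Hamming-metric code over $\Fm$ of minimum distance at least $\srk(\mC)\ge d$, and $\phi$ is injective on $\mC$, so $|\phi(\mC)|=|\mC|$.

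Next I would reproduce, now over the alphabet $\Fm$, the code-to-Ramsey construction behind~\cite[Theorem 2.1]{CFHMSV22}. The hypothesis $q^m=R(k;a,b)-1$ means precisely that the complete graph $K_{q^m}$ on the $|\Fm|=q^m$ alphabet symbols admits a set-coloring $\psi\colon E(K_{q^m})\to\binom{a}{b}$ with no monochromatic $k$-clique. I would then color the complete graph on the $|\phi(\mC)|$ codewords with the palette $\{1,\dots,a\}\times\{1,\dots,N\}$, of size $r=aN$: for an edge $\{x,y\}$ pick, by a fixed rule, exactly $d$ coordinates in which $x$ and $y$ differ (available because their Hamming distance is at least $d$) and assign the set of colors $\{(c,i)\st c\in\psi(\{x_i,y_i\})\}$ over the chosen coordinates $i$. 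Each selected coordinate contributes $b$ colors living in its own slot, so every edge receives exactly $db=s$ colors, and the coloring is a genuine map into $\binom{r}{s}$.

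The crucial observation is that this coloring has no monochromatic $k$-clique. Indeed, if codewords $z^1,\dots,z^k$ all shared a color $(c,i)$, then for each pair coordinate $i$ is among its selected differing coordinates, so $z^j_i\ne z^{j'}_i$ and $c\in\psi(\{z^j_i,z^{j'}_i\})$; hence $z^1_i,\dots,z^k_i$ are $k$ distinct symbols forming a monochromatic $k$-clique of color $c$ in $K_{q^m}$, contradicting the choice of $\psi$. By the definition of the set-coloring Ramsey number this yields $|\mC|=|\phi(\mC)|<R(k;r,s)$; substituting the general upper bound $R(k;r,s)\le 2^{c'k(r-s)^2r^{-1}\log(r/\min(s,r-s))}$ of~\cite{CFHMSV22}, valid for $k\ge3$ and $r>s\ge1$, completes the chain. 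The hypotheses $d<N$ and $1\le b<a$ guarantee $s=db<aN=r$ and $s\ge1$, so this last bound indeed applies.

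The only delicate point I anticipate is the bookkeeping in the second step: one has to certify that every edge is assigned \emph{exactly} $s=db$ colors, which is why the differing coordinates are trimmed to precisely $d$ and why the slot index $i$ is attached to each color so that the $b$-sets coming from different coordinates remain disjoint. Once this is in place, the clique correspondence under $\psi$ is immediate, and the remainder is simply matching the parameters $r=aN$, $s=db$ and $q^m=R(k;a,b)-1$ to the hypotheses of~\cite[Theorem 2.1]{CFHMSV22}.
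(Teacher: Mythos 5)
Your proposal is correct and takes essentially the same route as the paper: reduce $\mC$ to a Hamming-metric code over $\F_{q^m}$ of length $N$ (the same observation underlying Theorem~\ref{thm:induced}), then apply the code-to-Ramsey lower-bound connection and the upper bound on $R(k;r,s)$ from~\cite{CFHMSV22}. The only difference is that you unpack the set-coloring construction behind \cite[Theorem 2.1]{CFHMSV22} explicitly, whereas the paper cites that result as a black box.
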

\begin{proof}
Following the same observation as the one used in Theorem~\ref{thm:induced}, the size of $\mC$ is bounded by the size of the largest Hamming-metric code over $\F_{q^m}$ of length $N$. The connection between set-coloring Ramsey numbers and the size of the largest Hamming code used for the lower bound on $R(k;r,s)$ is given in~\cite[Theorem 2.1]{CFHMSV22}. The upper bound on $R(k;r,s)$ is given by~\cite[Theorem 1.1]{CFHMSV22}.
\end{proof}

\section{The Sum-Rank-Metric Graph}
\label{sec:3}

In this section, several structural properties of $\Gamma(\spac)$ are investigated, including regularity, distance-regularity, and walk-regularity. One of the reasons for investigating these, besides mathematical curiosity,
is to identify what kind of existing eigenvalue bounds on the $k$-independence number can be applied to $\Gamma(\spac)$.

As we will see in later sections, these techniques lead to upper bounds for the size of sum-rank-metric codes that improve the best known bounds for several parameters.

\begin{notation}
Throughout the paper, unless otherwise specified, we will use $\lambda_1,\lambda_2,\dots,\lambda_n$, to denote the (not necessarily distinct) eigenvalues of the adjacency matrix of a graph with~$n$ vertices. To denote the $r+1$ \textit{distinct} eigenvalues of the adjacency matrix we will use $\theta_0,\theta_1,\dots,\theta_r$, with multiplicity of $\theta_i$ denoted by $m(\theta_i)$. In general we assume $\lambda_1\geq\lambda_2\geq\cdots\geq\lambda_n$ and $\theta_0>\theta_1>\cdots>\theta_r$, unless explicitly specified otherwise.
The adjacency relation on the vertices of a graph is denoted by $\sim$ and the \textbf{degree} of a vertex is the number of edges that are incident to it. 
\end{notation}

Recall that the \textbf{Cartesian product} of  graphs $G$ and $H$ is the graph $G\times H$ whose
vertices are the elements of
the Cartesian product of the vertices of $G$ and $H$, and where vertices $(g_1,h_1)$ and $(g_2,h_2)$ are adjacent if either $g_1=g_2$ and $h_1\sim h_2$, or $g_1\sim g_2$ and $h_1=h_2$. This definition can be inductively extended to a Cartesian product of any finite number of graphs.

The following result shows that the sum-rank-metric graph is the Cartesian product of bilinear forms graphs.

\begin{proposition}\label{prop:prod}
    Let $\Gamma_i=\Gamma(\Mat(n_i,m_i,\Fq))$ for $i\in \{1, \ldots, t\}$. Then $\Gamma(\spac)$ is the Cartesian product $\Gamma_1\times\dots\times\Gamma_t$. 
\end{proposition}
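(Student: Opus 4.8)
The plan is to verify directly that the two graphs have the same vertex set and the same edge set, with everything resting on the additivity of the sum-rank function. First I would observe that, by definition of the (iterated) Cartesian product, the vertex set of $\Gamma_1\times\cdots\times\Gamma_t$ is the set of tuples $(X_1,\ldots,X_t)$ with $X_i$ a vertex of $\Gamma_i$, i.e.\ $X_i\in\F_q^{n_i\times m_i}$. This is precisely the underlying set of $\spac=\bigoplus_{i=1}^t\F_q^{n_i\times m_i}$, so the two graphs share the same vertex set under the obvious identification, and only the edge sets need to be compared.

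The key step is the edge comparison, which I would base on the identity $\srk(X-Y)=\sum_{i=1}^t\rk(X_i-Y_i)$ together with the fact that each summand $\rk(X_i-Y_i)$ is a nonnegative integer. Consequently $\srk(X-Y)=1$ holds if and only if exactly one of these summands equals $1$ and all the others vanish. Since $\rk(X_i-Y_i)=0$ is equivalent to $X_i=Y_i$, this says precisely that $X$ and $Y$ agree in all coordinates but one, say the $i$-th, where $\rk(X_i-Y_i)=1$, i.e.\ where $X_i\sim Y_i$ in $\Gamma_i$. This is exactly the adjacency condition defining the Cartesian product, so an edge of $\Gamma(\spac)$ is an edge of $\Gamma_1\times\cdots\times\Gamma_t$ and conversely.

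To handle the \emph{iterated} product cleanly I would run a short induction on $t$: the base case $t=1$ is the definition of $\Gamma_1$, and the inductive step uses associativity of the graph Cartesian product, splitting $\sum_{i=1}^t\rk(X_i-Y_i)$ into its first $t-1$ terms plus the last one so as to match the two-factor adjacency. I do not expect any genuine obstacle here, as the argument is a routine unwinding of definitions; the only points requiring mild care are making the vertex-set identification explicit and the bookkeeping of ``exactly one nonzero coordinate'' precise. Note in particular that the whole proof is purely combinatorial and set-theoretic, and uses neither Proposition~\ref{prop:alberto} nor any spectral input.
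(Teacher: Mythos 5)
Your proposal is correct and follows essentially the same route as the paper: the paper also argues by induction on $t$, using the additivity of $\srk$ to check that adjacency in the (iterated two-factor) Cartesian product coincides with sum-rank distance $1$; your ``exactly one summand equals $1$'' observation is precisely the content of the paper's two-case analysis in the inductive step. The only cosmetic difference is that you foreground the direct $t$-fold edge comparison and relegate the induction to bookkeeping for associativity, whereas the paper carries out the verification entirely inside the inductive step.
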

\begin{proof}
    We prove it by induction on $t$. For $t=1$, the claim is trivial. Suppose $t \ge 2$ and let $X_i$ and $Y_i$ be two matrices from $\Mat(n_i,m_i,\Fq)$ which correspond to two vertices of~$\Gamma_i$ for $i\in \{1, \ldots, t\}$. By the induction hypothesis, $\Gamma':=\Gamma_1\times\cdots\times\Gamma_{t-1}$ is $\Gamma(\Mat(\bfn',\bfm,\Fq))$, where $\smash{\bfn'=(n_1,\dots,n_{t-1})}$ and $\smash{\bfm'=(m_1,\dots,m_{t-1})}$. Consider the Cartesian product $\Gamma_1\times\dots\times\Gamma_{t-1}\times\Gamma_t$. The vertex set of this graph is clearly $\spac$, and two vertices $(X_1,\dots,X_t)$ and $(Y_1,\dots,Y_t)$ are adjacent if and only if one of the following holds:
    \begin{itemize}
        \item $(X_1,\dots,X_{t-1})=(Y_1,\dots,Y_{t-1})$ and $X_t\sim Y_t$ in $\Gamma_t$, which is equivalent to\\ $\srk((X_1,\dots,X_{t-1})-(Y_1,\dots,Y_{t-1}))=0$ and $\rk(X_t,Y_t)=1$;
        \item $(X_1,\dots,X_{t-1})\sim(Y_1,\dots,Y_{t-1})$ in $\Gamma'$ and $X_t=Y_t$, which is equivalent to\\ $\srk((X_1,\dots,X_{t-1})-(Y_1,\dots,Y_{t-1}))=1$ and $\rk(X_t,Y_t)=0$.
    \end{itemize}
    In either case
    it follows that $\srk((X_1,\dots,X_t)-(Y_1,\dots,Y_t))=1$.
    Moreover, 
    these are the only two possibilities when $(X_1,\dots,X_t)$ and $(Y_1,\dots,Y_t)$ are adjacent in $\Gamma(\spac)$. Hence $\Gamma_1\times\dots\times\Gamma_t$ is exactly $\Gamma(\spac)$, as claimed.
\end{proof}
Note that the graphs $\Gamma_i$ defined in Proposition~\ref{prop:prod} are bilinear forms graphs. More information about them can be found, for example, in~\cite[Section 9.5.A]{BCNbookDRG}.

Recall that a graph is \textbf{$a$-regular} if the degree of every vertex is $a$. A graph is called \textbf{vertex-transitive} if for any two vertices $u,v$ there exists a graph automorphism $f$ such that $f(u)=v$. A graph is a \textbf{Cayley graph} over a group $G$ with a connecting set $S\subseteq G$ if the vertices of the graph are the elements of $G$, and two vertices $x,y$ are adjacent if and only if there exists and element $s\in S$ such that $x+s=y$. We assume $S$ does not contain the neutral element of $G$ and is closed under taking inverses so that the Cayley graph is always undirected and without loops. The following result shows that the sum-rank-metric graph is a Cayley graph, which implies that it is also vertex-transitive and hence regular. 

\begin{proposition}\label{prop:reg} The graph $\Gamma(\spac)$ is the Cayley graph over an additive group $\spac$ with a connecting set $S=\{X\in\spac \mid \srk(X)=1\}$. In particular, $\Gamma(\spac)$ is vertex-transitive and $\delta$-regular with $\delta=\frac1{q-1}\sum_{i=1}^t (q^{n_i}-1)(q^{m_i}-1).$
\end{proposition}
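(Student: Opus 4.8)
The plan is to verify directly that $\Gamma(\spac)$ matches the definition of a Cayley graph over the additive group $(\spac,+)$ with the prescribed connecting set $S$, then to harvest vertex-transitivity as a free consequence, and finally to compute the degree as $|S|$ via a rank-one counting argument.

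First I would confirm that $S=\{X\in\spac \mid \srk(X)=1\}$ is an admissible connecting set. Since $\srk(0)=0$, the neutral element does not lie in $S$, so no loops arise. For closure under inverses I would invoke $\rk(-A)=\rk(A)$ over any field, which gives $\srk(-X)=\srk(X)$ and hence $S=-S$, ensuring the Cayley graph is undirected. The identification of the edge sets is then immediate: by definition $X\sim Y$ in $\Gamma(\spac)$ precisely when $\srk(X-Y)=1$, which is exactly the condition $Y-X\in S$ (equivalently $X-Y\in S$, using $S=-S$), matching the Cayley adjacency $x+s=y$ with $s=Y-X$.

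Next I would record vertex-transitivity, a standard consequence of the Cayley structure: for any $u,v\in\spac$ the translation $\tau\colon x\mapsto x+(v-u)$ is a bijection of the vertex set that preserves differences, hence adjacency, and satisfies $\tau(u)=v$. Regularity follows because vertex-transitive graphs are regular, with common degree equal to the degree of any fixed vertex; as the neighbours of $0$ are precisely the elements of $S$, the degree is $|S|$.

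The only computational step, which I expect to be the main (though routine) obstacle, is the evaluation of $|S|$. I would argue that a tuple $X=(X_1,\dots,X_t)$ satisfies $\srk(X)=\sum_{i=1}^t\rk(X_i)=1$ if and only if exactly one component $X_i$ has rank $1$ and all the others vanish, giving $|S|=\sum_{i=1}^t r_1(n_i,m_i)$, where $r_1(n,m)$ is the number of rank-one matrices in $\Fq^{n\times m}$. To count these I would write each rank-one matrix as an outer product $uv^\top$ with $u\in\Fq^n\setminus\{0\}$ and $v\in\Fq^m\setminus\{0\}$, and note that two such products agree exactly when they differ by the rescaling $(u,v)\mapsto(\alpha u,\alpha^{-1}v)$ for some $\alpha\in\Fq\setminus\{0\}$; this yields $r_1(n,m)=(q^n-1)(q^m-1)/(q-1)$. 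Summing over the $t$ blocks produces the claimed value $\delta=\frac{1}{q-1}\sum_{i=1}^t(q^{n_i}-1)(q^{m_i}-1)$.
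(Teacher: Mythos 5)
Your proof is correct and follows the same overall skeleton as the paper's: identify $\Gamma(\spac)$ as a Cayley graph on the additive group $(\spac,+)$ with connecting set $S$, deduce vertex-transitivity, and compute the degree as $|S|$ after observing that $\srk(X)=1$ forces exactly one block to have rank one and all others to vanish. The one place where you genuinely diverge is the evaluation of $|S|$: the paper simply cites the known valency of the bilinear forms graph (Theorem 9.5.2 of \cite{BCNbookDRG}) to obtain $(q^{n_i}-1)(q^{m_i}-1)/(q-1)$ rank-one matrices per block, whereas you count them from scratch by writing each rank-one matrix as an outer product $uv^\top$ with $u,v$ nonzero and noting that the fibers of this parametrization are precisely the $q-1$ rescalings $(u,v)\mapsto(\alpha u,\alpha^{-1}v)$. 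Your route is self-contained and elementary; the paper's is shorter and situates the computation within the established theory of bilinear forms graphs. You also verify explicitly that $S$ is an admissible connecting set, i.e., that $0\notin S$ and $S=-S$, a point the paper's proof leaves implicit; this is a small but genuine gain in rigor, since these conditions are exactly what make the Cayley graph simple and undirected.
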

\begin{proof} By definition of the sum-rank-metric graph, if two vertices $A$ and $B$ are adjacent, then $\srk(A-B)=1$. Clearly $A-B\in S$, the connecting set of the Cayley graph. On the other hand, for an arbitrary $X$ such that $X\in S$, the vertices $A$ and $A+X$ are adjacent since $\srk(A+X-A)=\srk(X)=1$. Thus, the graph is indeed Cayley, and hence vertex-transitive.

The regularity parameter $\delta$, which is also the largest eigenvalue of the spectrum of the graph, can be calculated because it is
the cardinality of the connecting set $S$, i.e., the neighborhood of $O$. Since $\srk(X)=1$ implies that there exists a single $i\in\{1,\dots,t\}$ such that $\rk(X_i)=1$ while $\rk(X_j)=0$ for all $j\neq i$, we have
$$\delta = \frac1{q-1}\sum\limits_{i=1}^t (q^{n_i}-1)(q^{m_i}-1)$$
by~\cite[Theorem 9.5.2]{BCNbookDRG}.
\end{proof} 

We now turn to the (partial) walk-regularity of
sum-rank-metric graphs.
Recall that a graph is
\textbf{$l$-partially walk-regular} if for any vertex $v$ and any positive integer $i\leq l$ the number of closed walks of length~$i$ that start and end in~$v$ does not depend on the choice of~$v$. A graph is \textbf{walk-regular} if it is $l$-partially walk-regular for any positive integer~$l$.

As already mentioned, the main motivation for studying the walk-regularity of the sum-rank-metric graph is to check the applicability of spectral methods
to control the
graph's $k$-independence number,
which we will present later in Section~\ref{sec:boundsss}.

\begin{proposition}\label{prop:walk-regular}
    The graph $\Gamma(\spac)$ is walk-regular.
\end{proposition}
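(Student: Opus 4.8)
The plan is to exploit the product structure established in Proposition~\ref{prop:prod} together with the fact that each factor $\Gamma_i = \Gamma(\Mat(n_i,m_i,\Fq))$ is a bilinear forms graph, and that walk-regularity is preserved under Cartesian products. First I would recall the standard characterization of walk-regularity in spectral terms: a graph is walk-regular if and only if, for every positive integer $\ell$, the diagonal entries of $A^\ell$ (where $A$ is the adjacency matrix) are all equal; equivalently, all the principal idempotents $E_i$ of $A$ have constant diagonal. This reformulation is convenient because it behaves well under the operations we need.

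The key structural input is that each bilinear forms graph $\Gamma_i$ is distance-regular (see~\cite[Section 9.5.A]{BCNbookDRG}), and every distance-regular graph is walk-regular. So each factor $\Gamma_i$ is walk-regular. The heart of the argument is then the following general fact, which I would state and prove as the main step: if $G$ and $H$ are walk-regular, then so is their Cartesian product $G \times H$. To see this, note that the adjacency matrix of $G\times H$ is $A_{G\times H} = A_G \otimes I_{|H|} + I_{|G|} \otimes A_H$, where $\otimes$ is the Kronecker product. A closed walk of length $\ell$ in $G\times H$ starting and ending at $(u,v)$ decomposes, via the binomial-type expansion of $(A_G\otimes I + I\otimes A_H)^\ell$, into a sum over ways of interleaving steps in the $G$-coordinate and steps in the $H$-coordinate. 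Concretely, the diagonal entry of $A_{G\times H}^\ell$ at $(u,v)$ expands as a sum of products of diagonal-type terms coming from powers of $A_G$ at $u$ and powers of $A_H$ at $v$; because the two factors commute, each such term factors as a function of $(A_G^a)_{uu}$ and $(A_H^b)_{vv}$ for $a+b=\ell$. Since $G$ and $H$ are walk-regular, each $(A_G^a)_{uu}$ is independent of $u$ and each $(A_H^b)_{vv}$ is independent of $v$, so the whole sum is independent of the vertex $(u,v)$. Hence $G\times H$ is walk-regular.

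Given this product lemma, the proposition follows by induction on $t$: the base case $t=1$ is the walk-regularity of the bilinear forms graph, and the inductive step applies the lemma to $\Gamma' \times \Gamma_t$ where $\Gamma' = \Gamma_1\times\cdots\times\Gamma_{t-1}$, using Proposition~\ref{prop:prod} to identify the product with $\Gamma(\spac)$.

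The main obstacle, I expect, is making the combinatorial decomposition of closed walks in the Cartesian product fully rigorous rather than hand-wavy. One must argue carefully that the commutativity $ (A_G\otimes I)(I\otimes A_H) = (I\otimes A_H)(A_G\otimes I)$ lets one collect the expansion of $A_{G\times H}^\ell$ into terms of the form $\binom{\ell}{a}\,(A_G^a \otimes A_H^{\ell-a})$ summed over $a$, and then read off the $((u,v),(u,v))$ diagonal entry as $\sum_{a} \binom{\ell}{a}\,(A_G^a)_{uu}\,(A_H^{\ell-a})_{vv}$; the cleanest route is to verify this Kronecker identity directly and then invoke walk-regularity of the factors termwise. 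An alternative that sidesteps the bookkeeping entirely is to work at the level of eigenvalue idempotents, showing the idempotents of the product are Kronecker products of the factors' idempotents and that a Kronecker product of constant-diagonal matrices again has constant diagonal; either approach works, and I would present whichever is shorter given the conventions already fixed in the paper.
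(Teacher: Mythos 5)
Your proof is correct, but it takes a genuinely different route from the paper's. The paper's argument is a one-line translation argument: since $\Gamma(\spac)$ is a Cayley graph over the additive group $\spac$ (Proposition~\ref{prop:reg}), the map $(Y_1,\dots,Y_t)\mapsto(Y_1+X_1,\dots,Y_t+X_t)$ is a graph automorphism carrying the zero tuple $O$ to any prescribed vertex $v=(X_1,\dots,X_t)$, and it therefore induces a bijection between closed $\ell$-walks based at $O$ and those based at $v$; vertex-transitivity gives walk-regularity immediately, with no spectral or product-structure input. You instead combine Proposition~\ref{prop:prod} with two facts: each bilinear forms graph is distance-regular and hence walk-regular, and the Cartesian product of walk-regular graphs is walk-regular, which you establish via the expansion $A_{G\times H}^\ell=\sum_{a=0}^{\ell}\binom{\ell}{a}\,A_G^a\otimes A_H^{\ell-a}$ (valid because $A_G\otimes I$ and $I\otimes A_H$ commute), reading off the constant diagonal termwise; the induction on $t$ then closes the argument. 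Both proofs are sound. What the paper's approach buys is economy: one sentence, using only the group structure already established. What yours buys is generality and extra information: the product lemma applies to Cartesian products of arbitrary walk-regular graphs, which need not be vertex-transitive (walk-regularity is strictly weaker than vertex-transitivity), and your expansion gives an explicit formula for the number of closed $\ell$-walks in $\Gamma(\spac)$ in terms of the factors, in the same spirit as the paper's later computation of the spectrum in Proposition~\ref{prop:evals}.
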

\begin{proof}
    The claim immediately follows from the observation that, for any fixed positive integer $k$, there is a bijection between a set of closed $k$-walks that start at $O$ (which is the vertex corresponding to the tuple of zero metrices) and a set of closed $k$-walks that start any other vertex
    $v= (X_1,X_2,\dots,X_t)$. The bijection from walks starting with $O$ to walks starting with $v$ is induced by the bijection 
    of $\spac$ given by $(Y_1,Y_2,\dots,Y_t)\mapsto (Y_1+X_1,Y_2+X_2,\dots,Y_t+X_t)$.
\end{proof}

Another important property of a graph is (partial) distance regularity.
For two vertices $x$ and $y$ at distance $i$ from each other, let $p^i_{j,h}(x,y)$ denote the number of vertices that are both at distance $j$ from $x$ and at distance $h$ from $y$. We say that a graph is \textbf{$l$-partially distance-regular} if for any integers $i,j,h$ such that $j,h\leq l$ and $i\leq j+h\leq l$ the value $p^i_{j,h}(x,y)$ does not depend on the choice of the vertices $x$ and $y$.
In particular, a graph is $l$-partially distance-regular if the values $\smash{c_i(x,y)=p^i_{1,i-1}(x,y)}$, $\smash{a_{i-1}(x,y)=p^{i-1}_{1,i-1}(x,y)}$, $\smash{b_{i-2}(x,y)=p^{i-2}_{1,i-1}(x,y)}$ only depend on $i\leq l$ and not on the choice of the vertices $x,y$. If a graph is $l$-partially distance-regular for any $l$, then it is called \textbf{distance-regular}. In a distance-regular graph $\Gamma$ with geodesic distance $d$, the values $a_i=a_i(x,y)$, $b_i=b_i(x,y)$, and $c_i=c_i(x,y)$ for $i=0,\dots, D=\max_{x,y\in \Gamma}d(x,y)$ form the \textbf{intersection array} of the graph. Because distance-regular graphs are also $\delta$-regular for some $\delta$, we have $a_i+b_i+c_i=0$ for any suitable~$i$, and in particular, $b_0=\delta$ and $a_0=c_0=0$. This implies that, to define the intersection array of the distance-regular graph, it is sufficient to define the values $b_i$ and $c_i$. Note that any $l$-partially distance-regular graph is also $l$-partially walk-regular, but the converse is not true in general. For instance, we will see in Proposition~\ref{prop:drg} that the sum-rank-metric graphs, despite being walk-regular (Proposition~\ref{prop:walk-regular}), are not even necessarily $2$-partially distance-regular.

The next result characterizes for which parameters
sum-rank-metric graphs are (partially) distance-regular.

\begin{proposition}\label{prop:drg}
    Suppose $t\geq2$. Then the graph $\Gamma(\spac)$ is distance-regular if and only if $n_i=1$ and $m_i=m$ for all $i\in \{1, \ldots, t\}$. The intersection array in that case is given by $b_i=(t-i)(q^m-1)$ and $c_i=i$ for all $i \in \{0,\dots,t\}$. Moreover, if $\Gamma(\spac)$ is not distance-regular, then it is also not $2$-partially distance-regular.
\end{proposition}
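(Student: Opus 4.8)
The plan is to leverage Proposition~\ref{prop:prod}, which identifies $\Gamma(\spac)$ with the Cartesian product $\Gamma_1 \times \cdots \times \Gamma_t$ of the bilinear forms graphs $\Gamma_i = \Gamma(\Mat(n_i,m_i,\Fq))$, together with the fact that each $\Gamma_i$ is itself distance-regular with intersection numbers depending only on $(n_i,m_i,q)$. For the forward implication I would first observe that when $n_i = 1$ the $i$-th block consists of row vectors, so any nonzero difference has rank $1$ and $\Gamma_i$ is the complete graph $K_{q^{m_i}}$. Hence if $n_i = 1$ and $m_i = m$ for all $i$, then $\Gamma(\spac) \cong K_{q^m} \times \cdots \times K_{q^m}$ is the Hamming graph $H(t,q^m)$. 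I would then read off the array directly: for vertices at distance $i$ (differing in exactly $i$ of the $t$ coordinates), fixing one of the $i$ disagreeing coordinates gives $c_i = i$, while altering one of the $t-i$ agreeing coordinates to any of the other $q^m-1$ values gives $b_i = (t-i)(q^m-1)$; as these depend only on $i$, the graph is distance-regular with the claimed array.

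For the converse together with the ``moreover'' statement, the key is to prove a single sharper claim: \emph{if it is not the case that $n_i = 1$ and $m_i = m$ for all $i$, then $\Gamma(\spac)$ is not even $2$-partially distance-regular.} This yields everything at once. Since distance-regularity implies $2$-partial distance-regularity, the claim gives the reverse implication of the equivalence; combined with the forward implication it gives the full equivalence, whence the ``moreover'' part follows, because if $\Gamma(\spac)$ is not distance-regular then the defining condition fails, and the claim then shows it is not $2$-partially distance-regular. To prove the claim I would exploit the two intersection numbers that $2$-partial distance-regularity forces to be constant, namely $a_1(x,y)$ (common neighbours of adjacent $x,y$) and $c_2(x,y)$ (common neighbours of $x,y$ at distance $2$), and split into two cases according to how the condition fails.

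Since in a Cartesian product the distance is the sum of the coordinatewise distances, I would first record two elementary computations. For adjacent $x,y$ differing only in coordinate $i$, every common neighbour must differ from both only in coordinate $i$, whence $a_1(x,y)$ equals the intersection number $a_1^{(i)}$ of $\Gamma_i$. For $x,y$ at distance $2$ there are exactly two types: \textbf{(I)} they differ in two coordinates $i\neq j$, each by a rank-$1$ matrix, in which case a short check shows there are exactly two common neighbours, so $c_2(x,y)=2$; and \textbf{(II)} they differ in a single coordinate $i$ by a rank-$2$ matrix, in which case $c_2(x,y)=c_2^{(i)}$. In \textbf{Case A} (some $n_i\geq 2$) both types occur: type I because $t\geq 2$ and every factor has an edge, and type II inside the block with $n_i\geq 2$, since $m_i\geq n_i\geq 2$ guarantees a rank-$2$ matrix. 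For the bilinear forms graph one computes $c_2^{(i)}=q^2+q$: any rank-$1$ matrix $z$ with $\rk(R-z)=1$ and $\rk R = 2$ must satisfy $\colsp z,\colsp(R-z)\subseteq\colsp R$ and $\rs z,\rs(R-z)\subseteq\rs R$ (both $2$-dimensional), reducing the count to the $2\times 2$ case, where an explicit count over $\Fq$ yields $q^2+q$ (alternatively one may cite~\cite[Section 9.5.A]{BCNbookDRG}). As $q^2+q>2$, the quantity $c_2$ is not constant and $\Gamma(\spac)$ fails to be $2$-partially distance-regular. In \textbf{Case B} (all $n_i=1$ but the $m_i$ not all equal) every distance-$2$ pair is of type I, so $c_2$ is useless; instead I would use $a_1$. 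Here $\Gamma_i=K_{q^{m_i}}$, whose adjacent vertices have $q^{m_i}-2$ common neighbours, so an edge in coordinate $i$ has $a_1=q^{m_i}-2$; choosing edges in two coordinates with $m_i\neq m_j$ gives distinct values of $a_1$, again violating $2$-partial distance-regularity.

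The step I expect to require the most care is the analysis of $c_2$ in Case A: distinguishing the two types of distance-$2$ pairs, verifying that type I always contributes exactly two common neighbours, and justifying the reduction of the type II count to the $2\times 2$ bilinear forms graph (after which only a routine count over $\Fq$ remains). The remaining bookkeeping---checking that the relevant configurations actually occur under the stated hypotheses and that $a_1$ and $c_2$ are genuinely among the intersection numbers constrained by $2$-partial distance-regularity---is straightforward but must be done carefully so that the two cases exhaust every way the defining condition can fail.
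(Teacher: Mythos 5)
Your proposal is correct and follows essentially the same route as the paper: the same case split (all $n_i=1$ with unequal $m_i$, versus some $n_i\geq 2$) and the same witnesses (an $a$-type intersection number in the first case, $c_2$ in the second) are used to rule out $2$-partial distance-regularity, while the forward direction is the same Hamming-graph count of $a_i$, $b_i$, $c_i$. The only differences are cosmetic: you compare $a_1$ across edges lying in different coordinates where the paper compares $a_2$ at distance two from the origin, and you compute the exact value $c_2^{(i)}=q^2+q$ for the rank-$2$ pair (which could equally be read off the paper's intersection-array lemma for $t=1$) where the paper merely exhibits three rank-one matrices to show that this count exceeds $2$.
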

\begin{proof}
$(\Leftarrow)$ To prove that $\Gamma(\spac)$ is distance-regular, it is enough to show that for vertices $v$ and $u$ with $d(v,u)=i$ the values $a_i$, $b_i$, and $c_i$ (which are the number of neighbors of $v$ at distance $i$, $i+1$, and $i-1$ from $u$, respectively), do not depend on the choice of $v$ and $u$. Since $\Gamma(\spac)$ is vertex-transitive by Proposition~\ref{prop:reg}, we may assume without loss of generality that $u$ corresponds to $O$, the element of $t$ zero matrices of size $1\times m$. Each matrix of size $1\times m$ has rank at most $1$. Since $d(u,v)=i$, the element corresponding to $v$ is a collection of $i$ non-zero and~$t-i$ zero matrices.

To obtain the number of neighbors of $v$ at distance~$i$ from $O$ we choose one of the~$i$ non-zero matrices of~$v$ and substitute it for a different non-zero $1\times m$ matrix. There are~$q^m-2$ possibilities for this, so~$a_i=i(q^m-2)$.
For the number of neighbors of~$v$ at distance~$i+1$ from~$O$, we take one of the~$t-i$ zero matrices of~$v$ and replace it with one of~$q^m-1$ non-zero matrices of size $1\times m$. Hence $b_i=(t-i)(q^m-1)$.
To get a neighbor of $v$ at distance $i-1$ from $O$, we choose one of the $i$ non-zero matrices of $v$ and replace it with a zero matrix, which means $c_i=i$.

It is easy to verify that $a_i+b_i+c_i=\delta$, the regularity of $\Gamma(\spac)$ given by Proposition~\ref{prop:reg}. Moreover, neither of these values depends on the choice of $v$, as required.

$(\Rightarrow)$ Next, we show that $n_i=1$ and $m_i=m$ is the only case when $\Gamma(\spac)$ is distance-regular. For this, we consider the following two possibilities separately.

First, suppose $n_i=1$ for all $i\in\{1,\dots,t\}$ but $m_i\neq m_j$ for some $i,j\in \{1,\dots,t\}$. As before, we attempt to calculate $a_i(v,O)$, where $O$ is the vertex corresponding to an element of all-zero matrices. Without loss of generality assume that $m_i\in \{m,m'\}$ for all $i\in\{1,\dots,t\}$ and $m\neq m'$. Then $a_i(v,O)$ is given by $j(q^m-1)+k(q^{m'}-1)$, where $j$ is the number of non-zero matrices of size $1\times m$ in $v$ and $k$ is the number of non-zero matrices of size $1\times m'$ in $v$. Clearly $j+k=i$, but the exact values of both $j$ and $k$ depend on the choice of $v$, so the value $a_i$ cannot be defined from~$i$ alone. Note that, in particular, $a_2(v,O)$ depends on the choice of $v$, so in this case the graph is also not $2$-partially distance-regular.

Next, we consider the possibility that $n_j\geq 2$ for some $j\in\{1,\dots,t\}$, which implies that now some elements of $\spac$ include matrices of rank $2$. Consider two vertices $v$ and $v'$ of the graph $\Gamma(\spac)$, the vertex $v$ is an element of $\spac$ with $i$ matrices of rank $1$ and $t-i$ matrices of rank $0$, while $v'$ is an element with one matrix $X_j$ of rank $2$, $i-2$ matrices of rank $1$, and $t-i+2$ matrices of rank $0$. Both these elements have sum-rank equal to $i$, which means that both vertices $v$ and $v'$ are at distance $i$ from~$O$.

We now attempt to calculate $c_i(v,O)$ and $c_i(v',O)$. For the vertex $v$, $c_i(v,O)=i$ by the same argument as the one presented in the ``if'' part of this proof. For the vertex $v'$, we apply a similar argument to obtain $c_i(v',O)=i-2+k$, where $k$ denotes the number of matrices $M$ of size $n_j\times m_j$ such that $\rk(M)=1$ and $\rk(X_j-M)=1$. In order for $\Gamma(\spac)$ to be a distance-regular graph, $k$ must be exactly $2$. However, it is easy to construct three different matrices that satisfy the requirement. Indeed, suppose $\bf{x_1}$ and $\bf{x_2}$ are two linearly independent non-zero vectors such that each row of {$X_j$} can be presented as $\alpha_i \bf{x_1}+\beta_i \bf{x_2}$ for $i\in \{1, \ldots, {n_j}\}$. Suppose that $M_1$, $M_2$ and $M_3$ are {$n_j\times m_j$} matrices such that the rows of $M_1$ are $\alpha_i \bf{x_1}$, the rows of $M_2$ are $\beta_i \bf{x_2}$, and the rows of $M_3$ are $\alpha_i (\bf{x_1}-\bf{x_2})$ for $i\in \{1, \ldots, n_1\}$. It is easy to see that $\rk(X_j-M_i)=1$ and $\rk(M_i)=1$ for $i\in\{1,2,3\}$. Hence, $c_i(x,O)$ depends on the choice of vertex $x$, which contradicts the distance-regularity of the graph. Note that, in particular, $c_2(x,O)$ depends on the choice of $x$, so in the case where $n_1\geq 2$ the graph $\Gamma(\spac)$ is also not $2$-partially distance-regular.
\end{proof}

Note that in the case $n_i=1$ and $m_i=m$ 
for all $i\in {\{1,\dots,t\}}$
and some $m$, the space $\spac$ with the sum-rank distance is $\F_{q}^m$ with the Hamming distance. In other words, when $\Gamma(\spac)$ is a distance-regular graph, then it is a Hamming-metric graph.

We also recall that a graph is \textbf{distance-transitive} if for any $i$ and any two pairs of vertices $v,w$ and $x,y$ such that $d(v,w)=d(x,y)=i$ there exists an automorphism of the graph that maps $v$ to $x$ and $w$ to $y$. It is well known that distance-transitive graphs are also distance-regular. In particular, sum-rank-metric graphs cannot be distance-transitive in general by Proposition~\ref{prop:drg}.
The next two lemmas treat the case $t=1$, where sum-rank-metric graphs reduce to graphs of bilinear forms.

\begin{lemma}
Suppose $t=1$. Then the graph $\Gamma(\Mat(n,m,\Fq))$ is distance-transitive with diameter $n$ and the intersection array given by  \begin{align*}
    b_i &= \frac1{q-1}q^{2i}(q^{m-i}-1)(q^{n-i}-1), \\
    c_i &= \frac1{q-1} q^{i-1} (q^i-1),
\end{align*}
{for all $i \in \{0, \ldots, n\}$.}
\end{lemma}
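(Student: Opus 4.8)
The plan is to proceed in three stages: first pin down the diameter, then establish distance-transitivity (which automatically yields distance-regularity), and finally compute the two families of intersection numbers by a direct rank-one perturbation count.

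First I would identify the metric. Since $t=1$ the sum-rank reduces to the ordinary rank (Remark~\ref{rem:reduce}), so by Proposition~\ref{prop:alberto} the geodesic distance on $\Gamma(\Mat(n,m,\Fq))$ equals the rank distance $(X,Y)\mapsto\rk(X-Y)$. As $m\geq n$, the rank of an $n\times m$ matrix ranges exactly over $\{0,1,\dots,n\}$ and every value is attained, so the diameter is $n$.

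Next, for distance-transitivity I would exhibit enough automorphisms. The maps $X\mapsto X+A$ for $A\in\Mat(n,m,\Fq)$ and $X\mapsto PXQ$ for $(P,Q)\in\GL_n(\Fq)\times\GL_m(\Fq)$ all preserve $\rk(X-Y)$, hence are graph automorphisms. Given two pairs $(v,w)$ and $(x,y)$ with $\rk(w-v)=\rk(y-x)=i$, I would first translate by $-v$ and by $-x$ to reduce to the pairs $(O,\,w-v)$ and $(O,\,y-x)$, and then invoke the normal form for matrix equivalence: any rank-$i$ matrix can be brought by row and column operations to $\begin{pmatrix} I_i & 0 \\ 0 & 0\end{pmatrix}$, so there exist $P,Q$ with $P(w-v)Q=y-x$; the map $X\mapsto PXQ$ fixes $O$ and sends $w-v$ to $y-x$. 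Composing with the two translations gives the required automorphism. Distance-transitivity implies distance-regularity, so the numbers $b_i$ and $c_i$ are well defined and may be computed from the single convenient pair $u=O$, $v=\begin{pmatrix} I_i & 0 \\ 0 & 0\end{pmatrix}$.

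Finally I would compute $b_i$ and $c_i$ by counting the neighbors $v+R$ of $v$, where $R=xy^\top$ ranges over rank-one matrices, sorted by $\rk(v+R)$. Writing $x=(x_1,x_2)$ and $y=(y_1,y_2)$ according to the $i+(n-i)$ and $i+(m-i)$ block split, a short case analysis of $v+xy^\top=\begin{pmatrix} I_i+x_1y_1^\top & x_1 y_2^\top \\ x_2 y_1^\top & x_2 y_2^\top\end{pmatrix}$ shows that $\rk(v+R)=i+1$ exactly when $x_2\neq 0$ and $y_2\neq 0$, while $\rk(v+R)=i-1$ exactly when $x_2=y_2=0$ and $I_i+x_1y_1^\top$ is singular, i.e.\ (by the matrix determinant lemma) when $1+y_1^\top x_1=0$. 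Counting the admissible pairs $(x,y)$ gives $q^{2i}(q^{n-i}-1)(q^{m-i}-1)$ in the first case and $(q^i-1)q^{i-1}$ in the second; dividing by $q-1$ to pass from pairs $(x,y)$ to the rank-one matrix $xy^\top$ (since $(\lambda x,\lambda^{-1}y)$ yields the same $R$, and both constraints are scaling-invariant) produces precisely the claimed formulas for $b_i$ and $c_i$.

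The main obstacle is this last stage: one must correctly determine how the rank of a rank-$i$ matrix changes under a rank-one update in each regime, and then carefully track the $(q-1)$-to-one correspondence between pairs $(x,y)$ and rank-one matrices $xy^\top$ so that the normalizing factor $1/(q-1)$ comes out right. By contrast, the distance-transitivity argument is comparatively routine, resting only on the classification of matrices up to equivalence by their rank.
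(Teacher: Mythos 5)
Your proof is correct, but it takes a genuinely different route from the paper: the paper disposes of this lemma in a single line by citing the classical theory of bilinear forms graphs (Theorem 9.5.2 of \cite{BCNbookDRG}), where distance-transitivity, the diameter, and the intersection array are already established. You instead give a self-contained elementary derivation: translations together with the $\GL_n(\Fq)\times\GL_m(\Fq)$-action and the rank normal form give distance-transitivity, and the intersection numbers come from counting rank-one updates $v+xy^\top$ of the canonical rank-$i$ matrix. Your case analysis is sound: $\rk(v+xy^\top)=i+1$ exactly when $x_2\neq0$ and $y_2\neq0$, since a rank-one update increases the rank precisely when $x\notin\colsp(v)$ and $y^\top\notin\rs(v)$, giving $q^{2i}(q^{n-i}-1)(q^{m-i}-1)$ pairs; and in the decreasing case the update must stay inside both spaces ($x_2=y_2=0$), whereupon the matrix determinant lemma isolates the condition $1+y_1^\top x_1=0$, giving $(q^i-1)q^{i-1}$ pairs. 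The $(q-1)$-to-one correspondence between pairs $(x,y)$ and rank-one matrices is also handled correctly, since both defining conditions are invariant under $(x,y)\mapsto(\lambda x,\lambda^{-1}y)$. What the paper's citation buys is brevity and an anchor in the standard literature; what your argument buys is transparency — the reader sees exactly where $b_i$ and $c_i$ come from, and the same counting technique reappears in the paper's own proof of Proposition~\ref{prop:drg} — at the cost of reproducing what is essentially the textbook proof.
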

\begin{proof}
    This is an immediate consequence of \cite[Theorem 9.5.2]{BCNbookDRG}.
\end{proof}

We say that a distance-regular graph has \textbf{classical parameters} $({D},q,\alpha,\beta)$ if it has diameter {$D$} and intersection array given by
$$b_i=\frac{q^{{D}}-q^i}{q-1}\left(\beta-\alpha\frac{q^i-1}{q-1}\right), \quad c_i=\frac{q^i-1}{q-1}\left(1+\alpha\frac{q^{i-1}-1}{q-1}\right),$$
{for all $i \in \{0, \ldots, D\}$.}
Note that the graph $\Gamma(\Mat(n,m,\Fq))$ is a distance-regular graph with classical parameters $(n,q,q-1,q^m-1)$, whose eigenvalues are given in the following result.

\begin{lemma}\label{lem:BFG_evals} Suppose $t=1$. Then the graph $\Gamma(\Mat(n,m,\Fq))$   has $n+1$ distinct eigenvalues given by
$$\theta_i=\frac{(q^{n-i}-1)(q^m-q^i)-q^i+1}{q-1},\quad i=0,\dots,n,$$
with respective multiplicities given by
$$m(\theta_i)=\prod\limits_{s=0}^{i-1}\frac{(q^{n-s}-1)(q^m-q^s)}{q^{s+1}-1}.$$
\end{lemma}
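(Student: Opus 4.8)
The plan is to exploit the structural fact, recorded immediately before the statement, that $\Gamma(\Mat(n,m,\Fq))$ is a distance-regular graph with classical parameters $(n,q,q-1,q^m-1)$, and whose intersection array $(b_i,c_i)$ is given by the preceding lemma. A connected distance-regular graph of diameter $D$ has exactly $D+1$ distinct eigenvalues, which are precisely the eigenvalues of its tridiagonal intersection matrix
$$L_1=\begin{pmatrix} a_0 & b_0 & & \\ c_1 & a_1 & b_1 & \\ & \ddots & \ddots & \ddots \\ & & c_n & a_n \end{pmatrix};$$
since the diameter here is $n$, this already yields the claimed count of $n+1$ distinct eigenvalues.

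For the eigenvalues themselves I would invoke the standard closed form for the eigenvalues of a distance-regular graph with classical parameters $(D,q,\alpha,\beta)$, namely
$$\theta_i=\frac{q^{D-i}-1}{q-1}\left(\beta-\alpha\,\frac{q^i-1}{q-1}\right)-\frac{q^i-1}{q-1},\qquad i=0,\ldots,D.$$
Substituting $D=n$, $\alpha=q-1$ and $\beta=q^m-1$, the inner bracket collapses to $q^m-q^i$, and clearing denominators gives the numerator $(q^{n-i}-1)(q^m-q^i)-(q^i-1)=q^{m+n-i}-q^n-q^m+1$, which is exactly $(q^{n-i}-1)(q^m-q^i)-q^i+1$. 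This recovers the stated $\theta_i$ and is a purely routine simplification.

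The substantive part is the multiplicities. Here I would use the standard sum formula for distance-regular graphs: with $N=|\Mat(n,m,\Fq)|$ and $k_j$ the number of vertices at distance $j$ from a fixed vertex (obtained from the array via $k_0=1$ and $k_j=k_{j-1}b_{j-1}/c_j$), one has
$$m(\theta_i)=\frac{N}{\sum_{j=0}^{n}k_j\,u_j(\theta_i)^2},$$
where the standard sequence is generated by $u_0=1$ and the three-term recurrence $c_j u_{j-1}+a_j u_j+b_j u_{j+1}=\theta_i u_j$ (so that $u_1=\theta_i/b_0$). Feeding in the values $b_j,c_j$ from the preceding lemma and simplifying, the right-hand side should telescope to $\qbin{n}{i}{q}\prod_{s=0}^{i-1}(q^m-q^s)$, which coincides with the asserted product since $\prod_{s=0}^{i-1}\frac{q^{n-s}-1}{q^{s+1}-1}=\qbin{n}{i}{q}$.

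I expect this last telescoping to be the main obstacle: the manipulation of the nested $q$-products is delicate, and checking that the denominator sum collapses to the clean product demands care. For this reason the most economical route is probably to cite the fully worked-out parameters of the bilinear forms graph in~\cite[Section 9.5.A]{BCNbookDRG} --- the same reference already used for the intersection array --- where both the eigenvalues and their multiplicities are tabulated, so that the computation can be replaced by a reference.
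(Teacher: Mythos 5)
Your proposal is correct and takes essentially the same route as the paper: the paper's proof is a two-line citation to \cite{BCNbookDRG} (Corollary 8.4.2 for the eigenvalues of a distance-regular graph with classical parameters, Theorem 8.4.3 for the multiplicities), which is exactly the ``cite the fully worked-out parameters in BCN'' endpoint you arrive at, with your substitution $D=n$, $\alpha=q-1$, $\beta=q^m-1$ being just the unwinding of that citation. The Biggs-type sum formula for the multiplicities is a valid but unnecessary detour, since the reference already tabulates them.
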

\begin{proof}
    This is an immediate consequence of \cite[Corollary 8.4.2]{BCNbookDRG}, which provides the eigenvalues, and of \cite[Theorem 8.4.3]{BCNbookDRG}, which gives the multiplicities.
\end{proof}

Proposition~\ref{prop:prod} combined with Lemma~\ref{lem:BFG_evals} and classical graph theory results
give us a complete description of the spectrum of the sum-rank-metric graph $\Gamma(\spac)$.

\begin{proposition}\label{prop:evals}
The graph $\Gamma(\spac)$ has the eigenvalue
$$\lambda_{\bf i} = \sum\limits_{j=1}^t\frac{(q^{n_j-i_j}-1)(q^{m_j}-q^{i_j})-q^{i_j}+1}{q-1},$$
for every tuple ${\bf i}=(i_1,\dots,i_t)$ such that $i_j \in \{0, \ldots, n_j\}$ where $j\in\{1, \ldots, t\}$, and no other eigenvalues.
The multiplicity of $\lambda_{\bf i}$ is given by
$$m(\lambda_{\bf i}) = \sum_{{\bf i'}: \lambda_{\bf i'} = \lambda_{\bf i}} \prod\limits_{j=1}^t\prod\limits_{s=0}^{i'_j-1}\frac{(q^{n_j-s}-1)(q^m_j-q^s)}{q^{s+1}-1},$$
where ${\bf i'}=(i'_1,\dots,i'_t)$ is a tuple such that $i'_j\in\{0,\dots,n_j\}$ and $\lambda_{\bf i} = \lambda_{\bf i'}$.
\end{proposition}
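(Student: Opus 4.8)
The plan is to combine the product structure of Proposition~\ref{prop:prod} with the spectral data of each factor supplied by Lemma~\ref{lem:BFG_evals}, exploiting the standard behaviour of the Cartesian graph product under the adjacency operation. First I would recall the classical fact that the adjacency matrix of a Cartesian product $G\times H$ is the Kronecker sum $A(G)\otimes I+I\otimes A(H)$. Hence, if $u$ is an eigenvector of $A(G)$ with eigenvalue $\alpha$ and $v$ is an eigenvector of $A(H)$ with eigenvalue $\beta$, then $u\otimes v$ is an eigenvector of $A(G\times H)$ with eigenvalue $\alpha+\beta$. Since a basis of eigenvectors of $A(G)$ tensored with a basis of eigenvectors of $A(H)$ yields a basis of the whole space, these sums exhaust the spectrum of the product, and the multiplicity of a value $\lambda$ in $G\times H$ equals $\sum_{\alpha+\beta=\lambda} m_G(\alpha)\,m_H(\beta)$.

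By Proposition~\ref{prop:prod} we have $\Gamma(\spac)=\Gamma_1\times\cdots\times\Gamma_t$ with $\Gamma_j=\Gamma(\Mat(n_j,m_j,\Fq))$, so iterating the above via a straightforward induction on $t$ shows that every eigenvalue of $\Gamma(\spac)$ is a sum $\theta^{(1)}+\cdots+\theta^{(t)}$, where each $\theta^{(j)}$ is an eigenvalue of the bilinear forms graph $\Gamma_j$, and that no other eigenvalues occur. Substituting the distinct eigenvalues $\theta_{i_j}$ of $\Gamma_j$ provided by Lemma~\ref{lem:BFG_evals}, indexed by $i_j\in\{0,\dots,n_j\}$, produces exactly the stated formula for $\lambda_{\bf i}$ as the tuple ${\bf i}=(i_1,\dots,i_t)$ ranges over $\prod_{j=1}^t\{0,\dots,n_j\}$.

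For the multiplicities, the Kronecker-sum rule gives the multiplicity of $\lambda_{\bf i}$ as the sum, over all index tuples ${\bf i'}$ with $\lambda_{\bf i'}=\lambda_{\bf i}$, of the product $\prod_{j=1}^t m(\theta_{i'_j})$, where each factor $m(\theta_{i'_j})$ is the multiplicity furnished by Lemma~\ref{lem:BFG_evals}. Expanding each factor into its product form yields precisely the stated expression for $m(\lambda_{\bf i})$.

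The only delicate point — and thus the main obstacle — is the bookkeeping of coincidences among eigenvalues: distinct tuples ${\bf i}\neq{\bf i'}$ can satisfy $\lambda_{\bf i}=\lambda_{\bf i'}$ (this already happens when two blocks of equal type are permuted), so the $\lambda_{\bf i}$ are not automatically distinct and the multiplicity genuinely has to be aggregated over the fibre $\{{\bf i'}:\lambda_{\bf i'}=\lambda_{\bf i}\}$, exactly as written in the statement. To be safe I would close with a dimension check confirming that no eigenvalue has been omitted, namely $\sum_{\bf i}\prod_{j=1}^t m(\theta_{i_j})=\prod_{j=1}^t q^{n_j m_j}=|\spac|$, which verifies that the tensor-product eigenvectors form a complete basis of the ambient space.
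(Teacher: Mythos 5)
Your proposal is correct and follows essentially the same route as the paper's proof: decompose $\Gamma(\spac)$ as a Cartesian product via Proposition~\ref{prop:prod}, invoke the standard fact that eigenvalues of a Cartesian product are sums of eigenvalues of the factors (which the paper simply cites, while you justify it via the Kronecker sum $A(G)\otimes I + I\otimes A(H)$), and substitute the spectral data of Lemma~\ref{lem:BFG_evals}. Your explicit treatment of the multiplicity aggregation over coinciding tuples and the closing dimension check are welcome elaborations of details the paper leaves implicit, but they do not constitute a different argument.
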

\begin{proof}
    It is a well-known fact that, if $\Gamma_1$ and $\Gamma_2$ are graphs with distinct eigenvalues $\lambda_i$, $i\in \{1, \ldots, n_1\}$, and $\mu_j$, $j\in\{1, \ldots, n_2\}$, then the eigenvalues of their Cartesian product $\Gamma_1\times\Gamma_2$  have form $\lambda_i+\mu_j$ for $(i,j) \in \{1, \ldots, n_1\} \times \{1, \ldots, n_2\}$; see for instance \cite{CDSbook}). This can be inductively extended to a Cartesian product of $t\geq2$ graphs. Combining this fact with Proposition~\ref{prop:prod} and Lemma~\ref{lem:BFG_evals}, we obtain the desired result.
\end{proof}


\section{New Eigenvalue Bounds 
for Sum-Rank-Metric Codes}
\label{sec:boundsss}
\label{sec:4}

In graph theory, the \textbf{$k$-independence number} of a graph $G$, denoted by $\alpha_k$, is the size of the largest set of vertices such that any two vertices in the set are at a distance greater than~$k$ from each other. Alternatively, one can consider the \textbf{$k$-th power graph} $G^k$ with the same vertex set as $G$ such that two vertices in $G^k$ are adjacent if and only if the distance between these two vertices in $G$ is at most~$k$. Then the $k$-independence number of $G$ is also the $1$-independence number of $G^k$, or the largest size of an independent set in $G^k$. Note that, to apply spectral bounds on the $1$-independence number of the $k$-th power graph, one needs to know how the spectrum of $G^k$ relates to the spectrum of $G$. In general, this relation is not known.

In~\cite{ACF2019}, sharp eigenvalue upper bounds on the $k$-independence number $\alpha_k$ of a graph were derived. In particular, Ratio-Type and Inertia-Type bounds were presented, which generalize the well-known Cvetkovi\'c's~\cite{cvetkovic1971} and Hoffman's bounds (unpublished, see e.g. \cite{h2021}) for the classic $1$-independence number.  Both bounds depend on a choice of a real polynomial $p$ of degree $k$ that can be found using mixed-integer linear programming (MILP). In~\cite{ACFNZ2022},
the authors present a MILP implementation for the Inertia-Type bound for general graphs. In~\cite{F2020}, an LP approach was proposed to find the best Ratio-Type bound for partially walk-regular graphs using minor polynomials, which has a connection to Delsarte's LP method. In particular, it was shown in the same paper~\cite{F2020} that the LP bound on certain families of Hamming graphs coincides with Delsarte's LP bound (more details on Delsarte's bound are given later on in this section).
In \cite{F2020}, a Ratio-Type bound
was used to show the non-existence of 1-perfect codes in odd graphs.

The following result establishes a link between coding theory and graph theory. It will play a key role in establishing our main results in Section~\ref{sec:new}.

\begin{corollary}\label{coro:equivalence}
$A_q(\bfn,\bfm,d)=\alpha_{d-1}\big(\Gamma(\spac)\big)$.
\end{corollary}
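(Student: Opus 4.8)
The plan is to unwind both sides of the claimed equality and show they count the same objects. On the coding side, $A_q(\bfn,\bfm,d)$ is by definition the largest cardinality of a code $\mC\subseteq\spac$ all of whose distinct codewords are at sum-rank distance at least $d$. On the graph side, $\alpha_{d-1}(\Gamma(\spac))$ is the largest set of vertices of $\Gamma(\spac)$ that are pairwise at geodesic distance greater than $d-1$, i.e.\ at distance at least $d$. Since the vertex set of $\Gamma(\spac)$ is exactly $\spac$, a code and a vertex subset are literally the same kind of object, so the entire argument reduces to checking that the two distance conditions coincide.

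The key step I would carry out is to invoke Proposition~\ref{prop:alberto}, which states that the geodesic distance in $\Gamma(\spac)$ coincides with the sum-rank distance. Concretely, I would argue as follows. Let $\mC\subseteq\spac$ be any subset with $|\mC|\geq 2$. By Proposition~\ref{prop:alberto}, for any two distinct $X,Y\in\mC$ the geodesic distance between the corresponding vertices equals $\srk(X-Y)$. Hence the condition ``$\srk(X-Y)\geq d$ for all distinct $X,Y\in\mC$'' is equivalent to ``the geodesic distance between any two distinct vertices of $\mC$ is at least $d$,'' which is precisely the condition ``any two distinct vertices of $\mC$ are at distance greater than $d-1$.'' This is exactly the defining property of a set realizing the $(d-1)$-independence number. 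Therefore the collection of admissible codes for $A_q(\bfn,\bfm,d)$ coincides with the collection of $(d-1)$-independent sets in $\Gamma(\spac)$.

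From this set-level identification the equality of the maxima is immediate: since the two families of admissible subsets of $\spac$ are identical, their largest cardinalities agree, giving $A_q(\bfn,\bfm,d)=\alpha_{d-1}(\Gamma(\spac))$. I would only need to briefly address the boundary conventions, namely the degenerate cases where $|\mC|=1$ (a single codeword trivially satisfies the minimum-distance requirement and corresponds to a single vertex, which is a $(d-1)$-independent set of size $1$) so that the two optimization problems have the same feasible region even at the trivial end.

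I do not expect any genuine obstacle here, since the statement is essentially a translation between coding-theoretic and graph-theoretic language once Proposition~\ref{prop:alberto} is in hand; the only mild subtlety is bookkeeping the off-by-one between ``distance greater than $k$'' in the definition of $\alpha_k$ and ``distance at least $d$'' in the code, which is resolved by setting $k=d-1$. The result is labeled a corollary precisely because it is a direct consequence of the identification of the two metrics, and the heavy lifting has already been done in establishing that $\Gamma(\spac)$ faithfully encodes the sum-rank distance.
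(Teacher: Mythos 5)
Your proposal is correct and follows essentially the same route as the paper's own proof: both invoke Proposition~\ref{prop:alberto} to identify geodesic distance with sum-rank distance, and then observe that $(d-1)$-independent sets in $\Gamma(\spac)$ are exactly codes of minimum distance at least $d$, so the maxima coincide. Your extra care with the off-by-one and the $|\mC|=1$ boundary case is fine but not needed beyond what the paper records.
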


\begin{proof}
    It follows from Proposition~\ref{prop:alberto} that $A_q(\bfn,\bfm,d)=\alpha_{d-1}$, where $\alpha_{d-1}$ is the $(d-1)$-independence number of $\Gamma(\spac)$. Indeed, let $S$ be a $(d-1)$-independent set in $V(\Gamma(\spac))$ of the largest cardinality. By definition, the geodesic distance between any two vertices in $S$ is at least~$d$. This implies that $S$ is the largest subset of elements of $\spac$ at distance at least $d$ from each other, which makes it a sum-rank-metric code having minimum distance $d$ with the largest cardinality in $\spac$. Hence $A_q(\bfn,\bfm,d)=|S|=\alpha_{d-1}$. 
\end{proof}

It follows from Corollary~\ref{coro:equivalence} that any upper bound on the $k$-independence number of a graph yields an upper bound for sum-rank-metric codes, and vice versa. The next result provides the main bound we will be using in this paper, the so-called Ratio-Type bound, which we will later apply to sum-rank-metric graphs.

\begin{theorem}[Ratio-Type bound; see \cite{ACF2019}]\label{thm:hoffman-like} Let $G=(V,E)$ be a regular graph with $n$ vertices and adjacency matrix $A$ with eigenvalues $\lambda_1\geq\dots\geq\lambda_n$. Let $p\in\mathbb{R}_k[x]$. Define $W(p)=\max_{u\in V}\{(p(A))_{uu}\}$ and $\lambda(p)=\min_{i=2,\dots,n}\{p(\lambda_i)\}$. Then $$\alpha_k(G) \leq n \, \frac{W(p)-\lambda(p)}{p(\lambda_1)-\lambda(p)}.$$
\end{theorem}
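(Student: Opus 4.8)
The plan is to mimic the classical Hoffman ratio-bound argument, but applied to the matrix $p(A)$ rather than to $A$ itself, exploiting the one combinatorial fact that makes a polynomial of degree $k$ the right object for controlling the $k$-independence number. Let $S\subseteq V$ be a $k$-independent set of maximum size $\alpha_k(G)$, and let $\mathbf{x}\in\{0,1\}^V$ be its characteristic vector, so that $\mathbf{x}^\top\mathbf{x}=\alpha_k(G)$. Since $A$ is symmetric and $p$ has real coefficients, $p(A)$ is a real symmetric matrix, and the strategy is to estimate the quadratic form $\mathbf{x}^\top p(A)\mathbf{x}$ from above and from below; comparing the two estimates yields the bound.

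First I would record the key observation. Writing $p=\sum_{\ell=0}^k p_\ell x^\ell$, the entry $(p(A))_{uv}=\sum_{\ell=0}^k p_\ell (A^\ell)_{uv}$ is a linear combination of the numbers of walks of length $\ell\le k$ from $u$ to $v$. If $u,v\in S$ are distinct, then $d(u,v)>k$ by $k$-independence, so every walk count $(A^\ell)_{uv}$ with $\ell\le k$ vanishes, whence $(p(A))_{uv}=0$. Consequently the principal submatrix of $p(A)$ indexed by $S$ is diagonal, and
$$\mathbf{x}^\top p(A)\mathbf{x}=\sum_{u\in S}(p(A))_{uu}\le \alpha_k(G)\,W(p),$$
which is the upper estimate.

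For the lower estimate I would use regularity together with the spectral decomposition of $A$. Fix an orthonormal eigenbasis $\mathbf{v}_1,\dots,\mathbf{v}_n$ with $A\mathbf{v}_i=\lambda_i\mathbf{v}_i$; since $G$ is regular with degree $\lambda_1$, we may take $\mathbf{v}_1=\mathbf{1}/\sqrt n$. Expanding $\mathbf{x}=\sum_i c_i\mathbf{v}_i$, the leading coefficient is $c_1=\mathbf{x}^\top\mathbf{1}/\sqrt n=\alpha_k(G)/\sqrt n$, while $\sum_i c_i^2=\alpha_k(G)$. Bounding $p(\lambda_i)\ge\lambda(p)$ for every $i\ge 2$ gives
$$\mathbf{x}^\top p(A)\mathbf{x}=\sum_{i=1}^n p(\lambda_i)c_i^2\ge p(\lambda_1)c_1^2+\lambda(p)\sum_{i\ge 2}c_i^2=p(\lambda_1)\frac{\alpha_k(G)^2}{n}+\lambda(p)\Bigl(\alpha_k(G)-\frac{\alpha_k(G)^2}{n}\Bigr).$$

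Finally I would combine the two estimates, divide by $\alpha_k(G)$ (the case $\alpha_k(G)=0$ being trivial), and rearrange to isolate $\alpha_k(G)/n$; this yields $(\alpha_k(G)/n)\bigl(p(\lambda_1)-\lambda(p)\bigr)\le W(p)-\lambda(p)$, and hence the claimed inequality, provided $p(\lambda_1)-\lambda(p)>0$ so that the division preserves the direction of the inequality. I expect the only genuinely delicate points to be the vanishing-of-off-diagonals observation — the conceptual heart of why degree $k$ is matched to $k$-independence — and the sign condition $p(\lambda_1)>\lambda(p)$ needed in the last step; the remaining manipulations are the standard Hoffman/ratio computation.
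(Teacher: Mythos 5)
Your proof is correct and complete. The two estimates are exactly the right ones: $k$-independence forces every off-diagonal entry of $p(A)$ on $S\times S$ to vanish, since $(A^\ell)_{uv}$ counts $uv$-walks of length $\ell\le k$ and none exist when $d(u,v)>k$; and the lower estimate for $\mathbf{x}^\top p(A)\mathbf{x}$ uses only regularity (so that $\mathbf{1}/\sqrt{n}$ may be taken as the eigenvector for $\lambda_1$, giving $c_1^2=\alpha_k(G)^2/n$) together with $p(\lambda_i)\ge\lambda(p)$ for $i\ge 2$. Your closing caveat is also accurate: the final division requires $p(\lambda_1)>\lambda(p)$, a hypothesis that is implicit in the statement (it is what makes the right-hand side meaningful) and is present in the source the paper cites.

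One point of comparison: the paper itself gives no proof of Theorem~\ref{thm:hoffman-like}; it imports the result from \cite{ACF2019}, where the argument is run through eigenvalue interlacing. There one forms the $2\times 2$ quotient matrix of $p(A)$ with respect to the partition $\{S,V\setminus S\}$, observes that its $(1,1)$ entry is at most $W(p)$ (the same vanishing-of-off-diagonals observation you make), that its row sums equal $p(\lambda_1)$ by regularity, and that its second eigenvalue is at least $\lambda(p)$ by interlacing; rearranging the resulting inequality gives the bound. Your Rayleigh-quotient computation is the direct, self-contained version of that argument --- in effect a hands-on proof of the one interlacing inequality needed in this $2\times 2$ situation --- so the two routes are equivalent in substance. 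What the interlacing formulation buys in \cite{ACF2019} is uniformity with the companion Inertia-Type bound and with refinements where $S$ is replaced by finer partitions; what your version buys is that nothing beyond the spectral theorem for symmetric matrices is invoked.
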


For a fixed $k$, the challenge behind applying the
Ratio-Type bound is
to find a polynomial of degree $k$ which minimizes the right-hand of the inequality above. In \cite{F2020}, an LP bound using minor polynomials was proposed for finding, for a given $k$ and a $k$-partially walk-regular graph $G$, the polynomial $p$ that optimizes the above Ratio-Type bound that we will be using. In this LP, the inputs are the distinct eigenvalues of the graph $G$, denoted $\theta_0>\cdots>\theta_r$, with respective multiplicities $m(\theta_i)$, $i\in\{0,\dots,r\}$. The \textbf{minor polynomial} $f_k\in\R_k[x]$ is the polynomial that minimizes $\sum_{i=0}^r m(\theta_i) f_k(\theta_i)$. Let $p=f_k$ be defined by $f_k(\theta_0)=x_0=1$ and $f_k(\theta_i)=x_i$ for $i\in\{1,\dots,r\}$, where the vector $(x_1,\dots,x_r)$ is a solution of
the following linear program: 
\begin{equation*}
\boxed{
\begin{array}{ll@{}ll}
\text{minimize}  &\sum_{i=0}^{r} m(\theta_i)x_i &\\
\text{subject to} &f[\theta_0,\dots,\theta_s]=0, & \quad s=k+1,\dots,r\\
&x_i\geq0, &\quad i=1,\dots,r\\
\end{array}
}
\end{equation*}
Here, $f[\theta_0,\dots,\theta_m]$ denote $m$-th divided differences of Newton interpolation, recursively defined by $$f[\theta_i,\dots,\theta_j]=\frac{f[\theta_{i+1},\dots,\theta_j]-f[\theta_i,\dots,\theta_{j-1}]}{\theta_j-\theta_i},$$ where $j>i$, starting with $f[\theta_i]=x_i$ for $i\in\{0,\dots,r\}$.
In~\cite{F2020} it was shown that for $k=3$ it is possible to obtain tight bounds for every Hamming graph $H(2,r)$ using this LP approach with minor polynomials, and it is also shown that in for these graphs the Ratio-Type bound coincides with Delsarte's LP bound~\cite{DelsarteLP}.
\begin{remark}
\label{noDels}
We should note that, in order to be applied, Delsarte's LP bound  \cite{DelsarteLP} requires a symmetric association scheme, which is a special partition of $\spac\times\spac$ into $n+1$ binary relations $R_0,R_1,\dots,R_n\in\mathcal{R}$. As part of the definition, for any $R_i,R_j,R_k\in\mathcal{R}$ the number of $z\in\spac$ such that $(x,z)\in R_i$ and $(z,y)\in R_j$ is the same for any $x,y$ such that $(x,y)\in R_k$. Assuming the association scheme is the one naturally induced by
the sum-rank distance (that is, $(x.y)\in R_i$ if and only if $\srk(x-y)=i$ for all $i$) this condition implies that the graph $\Gamma(\spac)$ must be distance-regular.
By Proposition~\ref{prop:drg} and Remark~\ref{rem:reduce}, this is the case only when the sum-rank metric reduces to the rank or to the Hamming metric, which is a classical setting for the Delsarte's LP bound.
In the general case, when $\Gamma(\spac)$ is not distance-regular (for a specific example, we refer to~\cite[Example 4.4]{byrne2022anticodes}), one can still define an association scheme (or, more generally, a coherent configuration)
by ``refining'' the sum-rank distance and then apply Delsarte's LP bound to this scheme.
This will be studied in detail by the authors elsewhere.
\end{remark}

Next we will look at formul{\ae} for the Ratio-Type bound for small~$k$ and any regular graph $G$. In the case where $k=1$, $\alpha_1$ is exactly the independence number and by choosing $p(x)=x$ we get the classical Hoffman's bound (unpublished, see e.g. \cite{h2021}). In the particular cases where $k=2$ and $k=3$ (corresponding to $d=3$ and $d=4$, respectively), the best polynomials that optimize the Ratio-Type bound from Theorem~\ref{thm:hoffman-like} have been found, resulting in the following more explicit bounds.

\begin{theorem}[Ratio-Type bound, $d=3$; see~\cite{ACF2019}]\label{thm:hoffman-k=2} 
Let $G$ be a regular graph with $n$ vertices and distinct eigenvalues of the adjacency matrix $\theta_0>\theta_1>\dots>\theta_r$ with $r\geq2$. Let $\theta_i$ be the largest eigenvalue such that $\theta_i\leq -1$. Then $$\alpha_2(G)\leq n\frac{\theta_0+\theta_i\theta_{i-1}}{(\theta_0-\theta_i)(\theta_0-\theta_{i-1})}.$$ Moreover, this is the best possible bound that can be obtained by choosing a polynomial via Theorem~\ref{thm:hoffman-like}.    
\end{theorem}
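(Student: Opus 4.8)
The plan is to apply the Ratio-Type bound of Theorem~\ref{thm:hoffman-like} with $k=2$ and to pin down the degree-$2$ polynomial that minimises its right-hand side. The computation is greatly simplified by the regularity of $G$: since $G$ has no loops we have $(A)_{uu}=0$, and since $G$ is $\theta_0$-regular we have $(A^2)_{uu}=\theta_0$ for every vertex $u$. Hence for any $p(x)=a_2x^2+a_1x+a_0\in\mathbb{R}_2[x]$ the diagonal of $p(A)$ is the constant $a_2\theta_0+a_0$, so $W(p)=a_2\theta_0+a_0$, and with $\lambda_1=\theta_0$ the bound reads $\alpha_2(G)\le n\,(W(p)-\lambda(p))/(p(\theta_0)-\lambda(p))$, where $\lambda(p)=\min_{j=1,\dots,r}p(\theta_j)$. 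The problem thus reduces to a finite-dimensional optimisation over the coefficients of $p$, and I would split the argument into exhibiting a good polynomial (for the inequality) and proving its optimality (for the ``moreover'').

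For the inequality I would take $p^\ast(x)=(x-\theta_{i-1})(x-\theta_i)$. Because $\theta_{i-1}$ and $\theta_i$ are \emph{consecutive} distinct eigenvalues, no $\theta_j$ lies strictly between them, so $p^\ast(\theta_j)\ge 0$ for every $j$, with equality exactly at $j=i-1$ and $j=i$; therefore $\lambda(p^\ast)=0$. Reading off $W(p^\ast)=\theta_0+\theta_{i-1}\theta_i$ (here $a_2=1$, $a_0=\theta_{i-1}\theta_i$) and $p^\ast(\theta_0)=(\theta_0-\theta_{i-1})(\theta_0-\theta_i)$, Theorem~\ref{thm:hoffman-like} yields exactly the claimed inequality.

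For optimality I would first remove the redundant degrees of freedom: the ratio defining the bound is invariant under $p\mapsto\mu p+\nu$ for $\mu>0$, since $W(p)$, $\lambda(p)$ and $p(\theta_0)$ all transform in the same affine way. Restricting to $a_2>0$, these two normalisations let me assume $p(x)=(x-v)^2$, so that $\lambda(p)=g(v):=\min_{j\ge 1}(\theta_j-v)^2$ is the squared distance from $v$ to the nearest non-principal eigenvalue, and the bound becomes a function of the single parameter $v$,
\[
\Phi(v)=\frac{\theta_0+v^2-g(v)}{(\theta_0-v)^2-g(v)}.
\]
The key observation is the identity (denominator) $-$ (numerator) $=\theta_0(\theta_0-1-2v)$, which turns minimising $\Phi$ into maximising $\theta_0(\theta_0-1-2v)/((\theta_0-v)^2-g(v))$. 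On each gap between consecutive eigenvalues one replaces $g(v)$ by $(v-\theta_j)^2$ or $(\theta_{j-1}-v)^2$, and a short derivative computation shows that the monotonicity of $\Phi$ there is governed precisely by the sign of $\theta_j+1$ (respectively $\theta_{j-1}+1$): $\Phi$ decreases as $v$ approaches $-1$ from either side. This is exactly where the threshold $-1$ enters, and it forces the minimum to occur at the midpoint $v=(\theta_{i-1}+\theta_i)/2$ of the unique gap straddling $-1$, which is $p^\ast$ up to the allowed normalisations.

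The main obstacle is this optimality half. The one-parameter reduction is routine, but one must carry out the gap-by-gap monotonicity analysis carefully and glue the local pictures together—since within a gap lying entirely below (resp.\ above) $-1$ the function $\Phi$ is monotone, while only the straddling gap produces an interior minimum—to conclude that the pair straddling $-1$ is globally best. One must additionally dispose of the boundary cases $a_2=0$ (a linear $p$, recovering the Hoffman/$k=1$ value) and $a_2<0$ (a downward parabola, for which $\lambda(p)$ is attained at an extreme eigenvalue), by checking directly that neither improves on $\Phi\big((\theta_{i-1}+\theta_i)/2\big)$. The appearance of $-1$ in the final statement is precisely the visible trace of the sign change of $\theta_j+1$ in this optimisation.
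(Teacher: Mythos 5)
The paper itself contains no proof of Theorem~\ref{thm:hoffman-k=2}: it is imported from~\cite{ACF2019} as a black box, so there is no internal argument to compare yours against; I can only judge your proposal on its own merits, and it is correct. The inequality half is complete as written: for a loopless $\theta_0$-regular graph the diagonal of $p(A)$ is constant equal to $a_2\theta_0+a_0$, the polynomial $p^\ast(x)=(x-\theta_{i-1})(x-\theta_i)$ satisfies $\lambda(p^\ast)=0$ because its roots are consecutive distinct eigenvalues, and Theorem~\ref{thm:hoffman-like} then yields exactly the stated bound. The optimality half is a sketch, but the claims you defer to ``a short derivative computation'' do check out: writing $p(x)=(x-v)^2$ and letting $\theta_j$ be the non-principal eigenvalue nearest to $v$, one gets
\begin{equation*}
\Phi(v)=\frac{\theta_0+2v\theta_j-\theta_j^2}{(\theta_0-\theta_j)(\theta_0+\theta_j-2v)},
\qquad
\operatorname{sign}\Phi'(v)=\operatorname{sign}\bigl(\theta_0(\theta_0-\theta_j)(\theta_j+1)\bigr),
\end{equation*}
so $\Phi$ decreases on every piece whose nearest eigenvalue is $\le-1$ and increases on every piece whose nearest eigenvalue is $>-1$; by continuity the global minimum sits at the midpoint of the unique gap $[\theta_i,\theta_{i-1}]$, which is $p^\ast$ up to the affine normalization. (One phrasing should be fixed: $\Phi$ decreases as $v$ approaches the \emph{midpoint of the straddling gap}, not $-1$ itself; the conclusion you then state is the right one.) The boundary cases you flag are genuinely needed and work exactly as you indicate: the Hoffman value $n(-\theta_r)/(\theta_0-\theta_r)$ is the limit of $\Phi(v)$ as $v\to-\infty$, and since $\Phi$ decreases on that ray it dominates $\Phi$ at the midpoint, which disposes of $a_2=0$; for $a_2<0$ the analogous computation (with $\lambda(p)$ attained at $\theta_r$ and the same limit as $v\to+\infty$) shows no improvement either. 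Two implicit points deserve a sentence in a polished write-up: you need $i\ge 2$ (so that $\theta_{i-1}\neq\theta_0$ and the denominator is nonzero), which is automatic since a regular graph with $\theta_1\le-1$ must be complete, contradicting $r\ge 2$; and your identification $\lambda(p)=\min_{j\ge 1}p(\theta_j)$ tacitly assumes $\theta_0$ is a simple eigenvalue (connectivity) --- in the disconnected case the extra term $p(\theta_0)$ in the minimum can only worsen competing bounds, so the conclusion survives, but this should be said.
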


The polynomial giving the best Ratio-Type bound bound has also been found in the case 
$d=4$, with a whole paper devoted to the derivation of the polynomial~\cite{KN2022}. This gives an indication of the difficulty of making the Ratio-Type bound explicit.

\begin{theorem}[Ratio-Type bound, $d=4$;
see~\cite{KN2022}]
\label{thm:hoffman-k=3} Let $G$ be a regular graph with $n$ vertices and distinct eigenvalues of the adjacency matrix $\theta_0>\theta_1>\dots>\theta_r$ with $r\geq3$. Let $s$ be the largest index such that $\smash{\theta_s\geq -\frac{\theta_0^2+\theta_0\theta_r-\Delta}{\theta_0(\theta_r+1)}}$, where $\Delta=\max_{u\in V}\{(A^3)_{uu}\}$. Then
$$\alpha_3(G)\leq n\frac{\Delta-\theta_0(\theta_s+\theta_{s+1}+\theta_r)-\theta_s\theta_{s+1}\theta_r}{(\theta_0-\theta_s)(\theta_0-\theta_{s+1})(\theta_0-\theta_r)}.$$ Moreover, this is the best possible bound that can be obtained by choosing a polynomial via Theorem~\ref{thm:hoffman-like}.
\end{theorem}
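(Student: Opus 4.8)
The plan is to optimize the general Ratio-Type bound of Theorem~\ref{thm:hoffman-like} over all $p\in\mathbb{R}_3[x]$ and to show that the optimum is attained by the cubic $p(x)=(x-\theta_s)(x-\theta_{s+1})(x-\theta_r)$, in complete analogy with the quadratic case of Theorem~\ref{thm:hoffman-k=2}, where the optimal choice was $(x-\theta_{i-1})(x-\theta_i)$. First I would evaluate the three quantities in Theorem~\ref{thm:hoffman-like} for a cubic $p(x)=c_3x^3+c_2x^2+c_1x+c_0$ using regularity. Since $G$ is $\theta_0$-regular and loopless, $(A)_{uu}=0$ and $(A^2)_{uu}=\theta_0$ for every vertex $u$, while only $(A^3)_{uu}$ depends on $u$. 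Hence $(p(A))_{uu}=c_3(A^3)_{uu}+c_2\theta_0+c_0$, and, provided $c_3\geq 0$, one gets $W(p)=c_3\Delta+c_2\theta_0+c_0$ with $\Delta=\max_u(A^3)_{uu}$.

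Next I would exploit the invariance of the bound under $p\mapsto ap+b$ for $a>0$: using these two degrees of freedom I can normalize $p(\theta_0)=1$ and $\lambda(p)=0$, so that the bound becomes simply $n\,W(p)$. The problem then reduces (restricting for now to $c_3\geq 0$) to minimizing the linear functional $W(p)=c_3\Delta+c_2\theta_0+c_0$ over the polytope $\{p : p(\theta_0)=1,\ p(\theta_j)\geq 0\ \text{for}\ j=1,\dots,r\}$. This is a linear program in the four coefficients with a single equality constraint, so any optimal vertex has three further active constraints, i.e. $p$ vanishes at three of the non-principal eigenvalues; up to the positive normalizing scalar this forces $p(x)=(x-\theta_a)(x-\theta_b)(x-\theta_c)$ for indices $1\le a<b<c\le r$.

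The feasibility requirement $p(\theta_j)\geq 0$ for all $j\geq 1$ then pins down which three eigenvalues occur. A cubic with positive leading coefficient is negative exactly on $(\theta_b,\theta_a)$ and on $(-\infty,\theta_c)$, so to avoid a sign violation at any eigenvalue we must have $\theta_a,\theta_b$ consecutive (that is, $b=a+1$) and $\theta_c$ the least eigenvalue (that is, $c=r$). Writing $a=s$ gives $p(x)=(x-\theta_s)(x-\theta_{s+1})(x-\theta_r)$, for which $\lambda(p)=0$ is attained precisely at $\theta_s,\theta_{s+1},\theta_r$; substituting its coefficients into $W(p)=\Delta-\theta_0(\theta_s+\theta_{s+1}+\theta_r)-\theta_s\theta_{s+1}\theta_r$ and $p(\theta_0)=(\theta_0-\theta_s)(\theta_0-\theta_{s+1})(\theta_0-\theta_r)$ reproduces the claimed fraction. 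Since this is the minimizer of the bound over all cubics, the same computation establishes both the inequality and the ``moreover'' optimality statement.

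It remains to identify the optimal consecutive pair, which is where I expect the real work to lie. I would treat the two upper roots as continuous variables and analyse the relaxed objective $\Phi(\rho)=\frac{\Delta-\theta_0(2\rho+\theta_r)-\rho^2\theta_r}{(\theta_0-\rho)^2(\theta_0-\theta_r)}$ obtained by collapsing them to a double root at $\rho$; solving $\Phi'(\rho)=0$ yields the stationary point $\rho^\ast=-\frac{\theta_0^2+\theta_0\theta_r-\Delta}{\theta_0(\theta_r+1)}$, which is exactly the threshold in the statement. The discrete optimum is then the consecutive pair $(\theta_s,\theta_{s+1})$ straddling $\rho^\ast$, i.e. the one with $\theta_{s+1}<\rho^\ast\le\theta_s$, which is precisely the condition defining $s$. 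Making this last step rigorous requires showing that the discrete bound $B(s)$ is unimodal in $s$, so that bracketing the continuous minimizer truly minimizes over admissible pairs, via an explicit sign analysis of $B(s)-B(s+1)$. Together with the verification that the optimal leading coefficient is indeed nonnegative, which legitimizes using $\Delta=\max_u(A^3)_{uu}$ rather than a minimum, this unimodality is the main technical obstacle and the reason a full treatment is lengthy.
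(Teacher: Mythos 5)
A preliminary remark: the paper contains no proof of this statement at all --- it is imported verbatim from~\cite{KN2022}, and the surrounding text even stresses that an entire paper was devoted to deriving the optimal polynomial --- so your attempt can only be judged on its own merits. Judged that way, your outline is the natural one and its first half is complete and correct: plugging $p(x)=(x-\theta_s)(x-\theta_{s+1})(x-\theta_r)$ into Theorem~\ref{thm:hoffman-like}, using $(A)_{uu}=0$ and $(A^2)_{uu}=\theta_0$ for a loopless regular graph, and noting that such a cubic is nonnegative at every $\theta_j$ with $j\ge 1$ (precisely because its two upper roots are consecutive eigenvalues and its lowest root is $\theta_r$), gives $\lambda(p)=0$, $W(p)=\Delta-\theta_0(\theta_s+\theta_{s+1}+\theta_r)-\theta_s\theta_{s+1}\theta_r$ and $p(\theta_0)=(\theta_0-\theta_s)(\theta_0-\theta_{s+1})(\theta_0-\theta_r)$, which is exactly the stated inequality. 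Your LP reduction is also sound: the feasible polytope is pointed (a nonzero cubic cannot vanish at the $r+1\ge 4$ points $\theta_0,\dots,\theta_r$), the objective is bounded below on it (since $W(p)\ge\frac1n\operatorname{tr}(p(A))\ge\frac1n$ there), the vertices are indeed the normalized cubics $(x-\theta_a)(x-\theta_{a+1})(x-\theta_r)$, and your computation of the stationary point $\rho^\ast=-\frac{\theta_0^2+\theta_0\theta_r-\Delta}{\theta_0(\theta_r+1)}$ checks out.

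The gaps are the two you flag yourself, and they are not peripheral: they are the actual content of~\cite{KN2022}, so the ``moreover'' clause remains unproven in your write-up. For the bracketing step, however, a full unimodality grind can be avoided. Write $B(a)$ for the bound at the vertex with roots $\theta_a,\theta_{a+1},\theta_r$. With the middle root $\mu$ and the root $\theta_r$ held fixed, the normalized bound is a linear-fractional (M\"obius) function of the remaining root $\xi$, whose derivative in $\xi$ has the constant sign of $\Delta-\theta_0^2-\theta_0\theta_r-\theta_0\mu(\theta_r+1)$; since $\theta_r+1<0$ (the hypothesis $r\ge 3$ rules out disjoint unions of complete graphs), this sign is positive iff $\mu>\rho^\ast$ and negative iff $\mu<\rho^\ast$. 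Applying this to $B(a)$ and $B(a+1)$, which share the roots $\theta_{a+1}$ and $\theta_r$, yields $B(a)>B(a+1)$ whenever $\theta_{a+1}>\rho^\ast$ and $B(a)<B(a+1)$ whenever $\theta_{a+1}<\rho^\ast$, i.e.\ the minimum over vertices sits exactly at $a=s$. The second gap is more structural than you suggest: for the graphs of this paper it is vacuous, because walk-regularity (Proposition~\ref{prop:walk-regular}) makes $(A^3)_{uu}$ constant, so $W(p)$ is linear in the coefficients regardless of the sign of $c_3$; but the theorem is stated for arbitrary regular graphs, where for $c_3<0$ one has $W(p)=c_3\min_u(A^3)_{uu}+c_2\theta_0+c_0$, the objective is only piecewise linear, and the vertices of the epigraph reformulation also include degree-$\le 2$ polynomials (the $k=2$ optimum among them). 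Ruling those out requires a separate argument that your proposal does not supply.
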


\begin{remark}
    In this paper we mainly focus on the applications of the previous two theorems. These only allow us to impose that our sum-rank-metric codes have minimum distance bounded from below by 3 or 4. While these codes are not particularly interesting from an application viewpoint, the goal of our paper is give evidence that spectral graph theory captures the sum-rank metric better than the currently available methods.
    Note that the analogues of Theorems~\ref{thm:hoffman-k=2} and~\ref{thm:hoffman-k=3} for larger $d$, or even some weaker versions of such results, would immediately allow studying codes with larger minimum distance via our approach. We leave that to future (graph theory) research.
\end{remark}

The bounds of Theorems~\ref{thm:hoffman-k=2} and~\ref{thm:hoffman-k=3} have been applied to several families of Hamming graphs, which are a special case of sum-rank-metric graphs due to Remark~\ref{rem:reduce}, see \cite{F2020,KN2022} for more details on the Hamming metric.

Recall that in Proposition~\ref{prop:walk-regular} we showed that the graph $\Gamma(\spac)$ is walk-regular. It follows from the definition of walk-regularity that, if $A$ is the adjacency matrix of a walk-regular graph on $n$ vertices, and $p$ is a polynomial with real coefficients, then the diagonal of $p(A)$ is constant with entries $\frac1n \operatorname{tr}(p(A))$, where $\operatorname{tr}(p(A))$ is the trace of $p(A)$. This means in particular that $\Delta$ in Theorem~\ref{thm:hoffman-k=3} can be calculated as $\Delta = \frac1n \sum_{i=0}^r \theta_i^3 m(\theta_i)$, where $m(\theta_i)$ denotes the multiplicity of the eigenvalue $\theta_i$ of $A$.

In~\cite{AK2022} the authors pointed out a connection between $\alpha_k(G_i)$ for some positive integer $k$ and $i\in\{1,\dots,t\}$ and the $k$-independence number of the Cartesian product $G_1\times\cdots\times G_t$. Namely, when extended by induction from a Cartesian product of two graphs to a Cartesian product of $t$ graphs, we obtain:
\begin{lemma}[see~\text{\cite[Theorem 7]{AK2022}}]\label{lem:alphak_cartesian} For any $t$ graphs $G_1,G_2,\dots,G_t$, 
\begin{align*}
    \alpha_k(G_1\times \dots\times G_t) \leq \min&\left(\alpha_k(G_1)\cdot|V_2|\cdots|V_t|,\alpha_k(G_2)\cdot|V_1|\cdot|V_3|\cdots|V_t|,\dots,\right. \\
    &\left.\alpha_k(G_t)\cdot|V_1|\cdots|V_{t-1}|\right).
\end{align*}
where $V_i$ is the vertex set of $G_i$ for $i\in\{1,\dots,t\}$. \end{lemma}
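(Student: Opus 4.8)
The plan is to proceed by induction on $t$, taking the two-factor case $\alpha_k(G_1 \times G_2) \leq \min\bigl(\alpha_k(G_1)|V_2|,\ \alpha_k(G_2)|V_1|\bigr)$ from \cite[Theorem 7]{AK2022} as the base case. The single fact driving everything is that distance in a Cartesian product is additive across the factors: $d_{G\times H}\bigl((g,h),(g',h')\bigr) = d_G(g,g') + d_H(h,h')$. I would first recall how the base case follows from this. Given a $k$-independent set $S \subseteq V_1 \times V_2$, fix the second coordinate and set $S_h = \{g : (g,h) \in S\}$; for distinct $g,g' \in S_h$ the additive formula gives $d_{G_1}(g,g') = d_{G_1 \times G_2}\bigl((g,h),(g',h)\bigr) > k$, so each $S_h$ is $k$-independent in $G_1$, whence $|S| = \sum_{h \in V_2} |S_h| \leq |V_2|\,\alpha_k(G_1)$. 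Exchanging the roles of the two factors yields the symmetric bound, and together they give the minimum.

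For the inductive step I would use associativity of the Cartesian product to write $G_1 \times \cdots \times G_t \cong G' \times G_t$, where $G' = G_1 \times \cdots \times G_{t-1}$ has vertex set of size $|V_1|\cdots|V_{t-1}|$. Applying the base case to the pair $(G', G_t)$ gives
\[
\alpha_k(G_1 \times \cdots \times G_t) \leq \min\bigl(\alpha_k(G')\,|V_t|,\ \alpha_k(G_t)\,|V_1|\cdots|V_{t-1}|\bigr).
\]
The second term equals $\alpha_k(G_t)\prod_{j\neq t}|V_j|$, which is exactly the $t$-th term of the claimed minimum. For the first term I would substitute the induction hypothesis $\alpha_k(G') \leq \min_{1\leq i\leq t-1}\bigl(\alpha_k(G_i)\prod_{j\leq t-1,\,j\neq i}|V_j|\bigr)$ and multiply through by $|V_t|$, producing precisely the first $t-1$ terms $\min_{1 \leq i \leq t-1}\bigl(\alpha_k(G_i)\prod_{j \neq i}|V_j|\bigr)$. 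Taking the minimum of the two contributions recovers all $t$ terms, completing the induction.

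The argument is elementary, so there is no deep obstacle; the one point that genuinely matters is that the induction hypothesis must be carried as the full $(t-1)$-fold minimum rather than as any single one of its terms. If one only propagated a single bound such as $\alpha_k(G') \leq \alpha_k(G_1)|V_2|\cdots|V_{t-1}|$, multiplying by $|V_t|$ would recover just one term of the target minimum and the statement would not close. Beyond that, the only thing I would want to verify carefully is the additive-distance identity for Cartesian products, and that it is indeed the mechanism underlying the base case in \cite{AK2022}.
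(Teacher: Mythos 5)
Your proposal is correct and takes essentially the same approach as the paper: the paper obtains this lemma precisely by extending the two-factor bound of \cite[Theorem 7]{AK2022} by induction on $t$, which is exactly your base case and inductive step. You simply spell out the details the paper leaves implicit (the fiber-slicing argument behind the two-factor case and the need to carry the full $(t-1)$-fold minimum through the induction), and these details are sound.
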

We know from Proposition~\ref{prop:prod} that the sum-rank-metric graph $\Gamma(\spac)$ is the Cartesian product of respective bilinear forms graphs, so Lemma~\ref{lem:alphak_cartesian} can be applied to them. We can also use the fact that the cardinality of the vertex set $V_i$ of a bilinear forms graph $\Gamma(n_i,m_i,\Fq)$ for some integers $n_i,m_i$ is given by $|V_i|=q^{n_im_i}$. In this case, the bound of Lemma~\ref{lem:alphak_cartesian} becomes $$\alpha_k(\Gamma(\spac))\leq \alpha_k(\Gamma(n_l,m_l,\Fq))\cdot q^{\sum_{i=1}^t n_im_i - m_ln_l},$$
where $l\in\{1,\dots,t\}$ is such that $\alpha_k(\Gamma(n_l,m_l,\Fq))\cdot q^{\sum_{i=1}^t n_im_i - m_ln_l}$ is minimal. By comparing this to the Singleton bound of Theorem~\ref{thm:non-induced} one can conclude that the latter would be hard to improve using Lemma~\ref{lem:alphak_cartesian}. Indeed, from a computational check of sum-rank-metric graphs on up to $1000$ vertices, we have not found any instances where the bound of Lemma~\ref{lem:alphak_cartesian} performs better than the bounds of Theorems~\ref{thm:induced} and~\ref{thm:non-induced} as well as the Ratio-Type bound of Theorem~\ref{thm:hoffman-like} when it is applied to the sum-rank-metric graph itself without utilizing its Cartesian product structure.

\begin{remark}  \label{rem:theta}
\begin{enumerate}
    \item A useful upper bound on the $k$-independence number of a graph $G$ is the \textit{Lov\'asz theta number} of $G^k$, denoted by $\vartheta_k$. Recall that $G^k$ denotes the $k$-th power of $G$, which is the graph with the same vertex set at $G$, and where two vertices $u$ and $v$ in $G^k$ are adjacent if and only if the distance between $u$ and $v$ in $G$ is at most $k$. The value of $\vartheta_k$ usually provides a good bound on $\alpha_k$, and it can be estimated using Semidefinite Programming (SDP) as follows~\cite{Lovasz}: For a graph $G$ on $n$ vertices, let $A=(a_{ij})$ range over all $n\times n$ symmetric matrices such that $a_{ii}=1$ for any $i$ and $a_{ij}=1$ for any distinct $i,j$ such that respective vertices of $G$ are non-adjacent. Then $\vartheta = \min_A\lambda_{\max}(A)$, where $\lambda_{\max}(A)$ is the largest eigenvalue of $A$.
    In fact, it was shown in~\cite{S79} that, for graphs derived from symmetric association schemes, the Lov\'asz theta number coincides with the bound obtained through Delsarte's LP method~\cite{DelsarteLP}. Tables~\ref{tab:d=3} and~\ref{tab:d=4} contain examples of sum-rank-metric graphs for which the Lov\'asz theta number $\vartheta_k$, the Ratio-Type bound, and the bounds from Theorems~\ref{thm:induced} and~\ref{thm:non-induced} are calculated. Note that the 
    Lov\'asz theta bound is never worse than the Ratio-Type bound. However, in practice, the Ratio-Type bound
    can be calculated in an exact and quicker way, while the Lov\'asz theta number provides an approximation. Indeed, in order to use the Ratio-Type bound, we only require to know the 
    adjacency spectrum, which we know for our graph of interest by Proposition~\ref{prop:evals}. In contrast, Lov\'asz theta number $\vartheta_k$ requires of the whole graph adjacency matrix in order to be calculated.  
    
    \item Other graph theoretical bounds on the $k$-independence number are known in the literature. For example, \cite{FIRBY199727} introduces a bound on $\alpha_k$ in terms of the average distance in a connected graph. In~\cite{shi2021sharp}, the authors show upper bounds which use the minimum and the maximum degree of the graph, along with infinitely many regular graphs for which these bounds are attained. However, there is no example of a non-distance-regular sum-rank-metric graph up to $1000$ vertices for which these graph theoretical bounds perform better than the Ratio-Type bound of Theorem~\ref{thm:hoffman-like}.
\end{enumerate}
\end{remark}


\section{Explicit Bounds and Comparisons}\label{sec:new}
\label{sec:5}

In this section we turn to concrete applications of our eigenvalue bounds, comparing them with some of the known bounds on sum-rank-metric codes and giving evidence that they improve them. At the end of the section we provide some tables showing that our eigenvalue bounds improve on the best currently known bound.

\begin{theorem}\label{thm:rt-bound_family}
    Let $G= \Gamma(\spac)$ with $\bfn=(n,1,\dots,1)$ and $\bfm=(m,1,\dots,1)$,
    with $m\geq n$ for some integers $m\geq2$ and $n\geq1$. Then $\alpha_2(G)$ is upper bounded by
    {\footnotesize{
\begin{equation}\label{eq:alpha2_bound}
    \frac{q^{m n+t-1}(q-1) \left((q-1)(\varepsilon+1) (\varepsilon-q+1)+\left(q^m-1\right) \left(q^n-1\right)+(q-1)^2 (t-1)\right)}{\left(\varepsilon(q-1)+\left(q^m-1\right)
   \left(q^n-1\right)+(q-1)^2 (t-1)+1\right) \left(\varepsilon(q-1)+\left(q^m-1\right) \left(q^n-1\right)+(q-1)^2 (t-2)\right)}
\end{equation}
}}
where $\varepsilon=(t-1)\mod q$.
\end{theorem}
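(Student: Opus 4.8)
The plan is to apply the Ratio-Type bound of Theorem~\ref{thm:hoffman-like} to $G=\Gamma(\spac)$ with a carefully chosen quadratic polynomial, exploiting that for this family the whole spectrum lies in a single residue class modulo $q$. First I would record the basic data of $G$: it has $n_V=q^{mn+t-1}$ vertices, and by Proposition~\ref{prop:reg} it is $\delta$-regular with largest eigenvalue $\theta_0=\delta=\frac{(q^n-1)(q^m-1)}{q-1}+(t-1)(q-1)$; moreover it is walk-regular by Proposition~\ref{prop:walk-regular}, which is what will let me evaluate the diagonal of $p(A)$ as a trace.

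Next I would pin down the spectrum modulo $q$. By Proposition~\ref{prop:prod}, $G=\Gamma_1\times\Gamma_2\times\cdots\times\Gamma_t$, where $\Gamma_1=\Gamma(\Mat(n,m,\Fq))$ and each $\Gamma_j$ with $j\ge 2$ is $\Gamma(\Mat(1,1,\Fq))\cong K_q$, whose eigenvalues are $q-1$ and $-1$. Since eigenvalues add under Cartesian products (the fact used in the proof of Proposition~\ref{prop:evals}), every eigenvalue of $G$ has the form $\mu+\sum_{j=2}^{t}\nu_j$ with $\mu$ an eigenvalue of $\Gamma_1$ and each $\nu_j\in\{q-1,-1\}$. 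Using the closed form in Lemma~\ref{lem:BFG_evals}, the eigenvalue indexed by $i$ of $\Gamma_1$ satisfies $(q-1)\theta_i=q^{m+n-i}-q^n-q^m+1\equiv 1\pmod q$ (all three powers have exponent at least $1$, as $m\ge 2$, $n\ge 1$ and $i\le n$), so $\theta_i\equiv -1\pmod q$. As $q-1\equiv-1\equiv\nu_j\pmod q$ too, every eigenvalue of $G$ is congruent to $-1-(t-1)\equiv-(\varepsilon+1)\pmod q$, where $\varepsilon=(t-1)\bmod q$. This uniform congruence is the structural fact the whole argument rests on.

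Then I would take $p(x)=(x+\varepsilon+1)\big(x-(q-1-\varepsilon)\big)\in\R_2[x]$, whose roots $-(\varepsilon+1)\le -1$ and $q-1-\varepsilon\ge 0$ differ by exactly $q$. Because every eigenvalue of $G$ is $\equiv-(\varepsilon+1)\pmod q$, none can lie strictly between these two consecutive representatives; hence each eigenvalue is either $\le-(\varepsilon+1)$ or $\ge q-1-\varepsilon$, and in both cases $p$ is nonnegative, so $\lambda(p)=\min_{i\ge 2}p(\lambda_i)\ge 0$. Walk-regularity gives $W(p)=\tfrac1{n_V}\operatorname{tr}\big(p(A)\big)=\delta-(\varepsilon+1)(q-1-\varepsilon)$, using $\operatorname{tr}(A)=0$ and $\operatorname{tr}(A^2)=n_V\delta$. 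Since $\lambda(p)\ge 0$ and $p(\theta_0)\ge W(p)$, one has $\tfrac{W(p)-\lambda(p)}{p(\theta_0)-\lambda(p)}\le\tfrac{W(p)}{p(\theta_0)}$, so Theorem~\ref{thm:hoffman-like} yields $\alpha_2(G)\le n_V\,\frac{\theta_0-(\varepsilon+1)(q-1-\varepsilon)}{(\theta_0+\varepsilon+1)\big(\theta_0-(q-1-\varepsilon)\big)}$. (Here $-(\varepsilon+1)$ and $q-1-\varepsilon$ play the roles of $\theta_i$ and $\theta_{i-1}$ in Theorem~\ref{thm:hoffman-k=2}.)

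Finally I would substitute the explicit value of $\theta_0$ and clear denominators by multiplying numerator and denominator by $(q-1)^2$; using $(q-1)\theta_0=(q^n-1)(q^m-1)+(t-1)(q-1)^2$ collapses the three factors into polynomials in $q,m,n,t,\varepsilon$ of the form displayed in~\eqref{eq:alpha2_bound}. The conceptual obstacle is precisely the choice of $p$: the genuinely largest eigenvalue $\le-1$ depends intricately on the relative sizes of $n$, $m$ and $t$ and seems hopeless to write in closed form, so the key idea is to bypass it entirely and use the mod-$q$ congruence to select a \emph{universal} quadratic whose roots bracket the gap around $[-1,0]$ and which is automatically nonnegative on the spectrum. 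Establishing $\lambda(p)\ge 0$ and computing $W(p)$ are the substantive steps; the concluding simplification is routine but lengthy algebra, and one should check carefully the constant terms in the denominator factors.
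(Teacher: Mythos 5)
Your proof is correct, and it takes a route that differs from the paper's in a substantive and in fact advantageous way. The paper uses the same product decomposition (its Hamming factor $H(t-1,q)$ is your $(t-1)$-fold product of $K_q$) and the same mod-$q$ bookkeeping with $\varepsilon$, but it then invokes the specialized bound of Theorem~\ref{thm:hoffman-k=2}, asserting that $-1-\varepsilon$ and $q-1-\varepsilon$ are the actual eigenvalues of $G$ closest to $-1$. That assertion can fail, because the progressions $\lambda_i+(q-1)(t-1)-qj$, $0\le j\le t-1$, have only $t$ terms and need not straddle $-1$: for instance, with $q=2$, $\bfn=(3,1)$, $\bfm=(3,1)$ (so $t=2$, $\varepsilon=1$) the spectrum of $G$ is $\{50,48,18,16,2,0,-6,-8\}$, and $-1-\varepsilon=-2$ is not an eigenvalue. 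Your argument --- choose $p(x)=(x+\varepsilon+1)\bigl(x-(q-1-\varepsilon)\bigr)$, get $\lambda(p)\ge 0$ from the congruence alone, compute $W(p)$ by walk-regularity, and apply the general Theorem~\ref{thm:hoffman-like} --- never needs the roots of $p$ to be eigenvalues, so it proves the stated bound for all admissible parameters; in effect it repairs this gap in the paper's own proof, at the modest extra cost of checking $W(p)\le p(\theta_0)$ and the monotonicity of the ratio in $\lambda(p)$. One caveat on your last step: carrying out the algebra literally (yours or the paper's) gives the denominator factor $(q-1)(\theta_0+\varepsilon+1)=\varepsilon(q-1)+(q^m-1)(q^n-1)+(q-1)^2(t-1)+(q-1)$, whereas the display~\eqref{eq:alpha2_bound} has $+1$ where this has $+(q-1)$. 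The two agree for $q=2$; for $q>2$ your bound is strictly smaller and hence still implies~\eqref{eq:alpha2_bound} (whose constant term appears to be a typo), so you should phrase the conclusion as ``the derived bound is at most the displayed one'' rather than claiming the expressions coincide exactly.
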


\begin{proof}
We first note that $\Gamma(\spac)$ is the Cartesian product of $\Gamma(\Mat(n,m,\Fq))$ and the Hamming graph $H(t-1,q)$ on $q^{t-1}$ vertices. The eigenvalues of $\Gamma(\Mat(n,m,\Fq))$ are given by Lemma~\ref{lem:BFG_evals}. The eigenvalues of the Hamming graph are well-known and defined by $(q-1)(t-1)-qj$ for $j=0,\dots,t-1$. Hence by Proposition~\ref{prop:evals} the eigenvalues of $\Gamma(\spac)$ are
$$\lambda_i+(q-1)(t-1)-qj \text{ for } i=0,\dots,n \text{ and } j=0,\dots,t-1,$$ where $\lambda_i=\frac{(q^{n-i}-1)(q^m-q^i)-q^i+1}{q-1}$.

Let $\theta_i$ and $\theta_{i-1}$ be eigenvalues as in Theorem~\ref{thm:hoffman-k=2}, meaning $\theta_i$ is the largest eigenvalue such that $\theta_i\leq-1$ while $\theta_{i-1}$ is the smallest eigenvalue such that $\theta_{i-1}>-1$. Consider the spectrum of $\Gamma(\spac)$ described above. When fixing $\lambda_i$, we have $t$ distinct eigenvalues, and the two eigenvalues among them that are closest to $-1$ are $$\lambda_i+(q-1)(t-1)-q\ceil{\frac{\lambda_i+(q-1)(t-1)+1}{q}}$$ and $$ \lambda_i+(q-1)(t-1)+q-q\ceil{\frac{\lambda_i+(q-1)(t-1)+1}{q}}.$$
Note that, for any integers $a$ and $b>0$, the expression $b\ceil{\frac{a}b}$ is equal to $a$ if $a\mod b\equiv 0$ or to $a+b-(a\mod b)$ otherwise. Observe that $\lambda_i\equiv q-1\mod q$, and hence $\lambda_i+(q-1)(t-1)+1\equiv -(t-1)\mod q$. Let $\varepsilon=(t-1)\mod q$.  Then the two eigenvalues above can be rewritten as: $$-1-\varepsilon$$ and $$q-1-\varepsilon.$$
Note that the number of vertices of $\Gamma(\spac)$ is $q^{mn+t-1}$, and it is a $\delta$-regular graph with $\delta=\frac{(q^n-1)(q^m-1)}{q-1}+(t-1)(q-1)$. Then the Ratio-Type bound on $\alpha_2$ from Theorem~\ref{thm:hoffman-k=2} gives the desired result.
\end{proof}


We now apply the bound of Theorem~\ref{thm:rt-bound_family} in two special cases where we can carefully compare our results with the state of the art.

\subsection{The case \texorpdfstring{$\bfn=(1,\dots,1)$}{n=(1,...,1)}, \texorpdfstring{$\bfm=(m,1,\dots,1)$}{m=(m,1,...,1)}, and \texorpdfstring{$d=3$}{d=3}}\label{subsection:n=1}

We consider the ambient space $\spac$ with $\bfn=(1,\dots,1)$ and $\bfm=(m,1,\dots,1)$ for an arbitrary $t\geq2$ and $m\geq2$. In this case, the graph $\Gamma(\spac)$ is the Cartesian product of the complete graph on $q^m$ vertices $K_{q^m}$
and the Hamming graph $H(t-1,q)$ on $q^{(t-1)}$ vertices.

From the Ratio-Type bound in~(\ref{eq:alpha2_bound}) we derive the bound
\begin{equation}\label{eq:alpha2_case.n=1}
    A_q(\bfn,\bfm,3)
    \leq q^{m+t-1}\frac{q^m-1+(t-1)(q-1)+(-\varepsilon-1)(q-\varepsilon-1)}{(q^m+(q-1)(t-1)+\varepsilon)(q^m+(q-1)(t-1)+\varepsilon-q)},
\end{equation}
where $\varepsilon=(t-1)\mod q$.

In the rest of this section, we analyze the 
bounds of Theorems~\ref{thm:induced} and~\ref{thm:non-induced}, comparing them
with the bound in (\ref{eq:alpha2_case.n=1}).
 
To evaluate the Sphere-Packing bound of Theorem~\ref{thm:non-induced} for $d=3$, we require the cardinality of a ball in $\spac$ of radius $1$, which is exactly $\delta+1$, where $\delta$ is the degree of the graph as in the proof of Theorem~\ref{thm:rt-bound_family}. This leads us to
$$A_q(\bfn,\bfm,3)\leq \frac{q^{m+t-1}}{q^m+(t-1)(q-1)}.$$

\begin{lemma}\label{lem:n=1.beat_SP3} Let $\bfn=(1,\dots,1)$ and $\bfm=(m,1,\dots,1)$.
Then the approximation for $A_q(\bfn,\bfm,3)$ given by the Ratio-Type bound~(\ref{eq:alpha2_case.n=1}) is not worse than the approximation using the Sphere-Packing bound of Theorem~\ref{thm:non-induced}.
\end{lemma}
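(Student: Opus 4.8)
The plan is to compare directly the two upper bounds as rational expressions and show that the Ratio-Type value in~(\ref{eq:alpha2_case.n=1}) never exceeds the Sphere-Packing value $q^{m+t-1}/(q^m+(t-1)(q-1))$ derived just above. Both carry the common factor $q^{m+t-1}$, so it suffices to compare the remaining rational factors. First I would introduce the shorthand $D = q^m + (t-1)(q-1)$ and note that $D = \delta+1$, where $\delta$ is the degree computed in the proof of Theorem~\ref{thm:rt-bound_family}; thus $D$ is exactly the denominator of the Sphere-Packing bound (the cardinality of a ball of radius $1$). With this notation the denominator of~(\ref{eq:alpha2_case.n=1}) reads $(D+\varepsilon)(D+\varepsilon-q)$ and its numerator reads $D-1-(\varepsilon+1)(q-\varepsilon-1)$, since $q^m-1+(t-1)(q-1)=D-1$.

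Next I would record that both denominators are strictly positive, so that cross-multiplication preserves the inequality. As $m\geq 2$ we have $D\geq q^2$, whence $D+\varepsilon-q\geq q^2-q=q(q-1)>0$ and $D+\varepsilon>0$, while $D>0$ is clear. The desired inequality is therefore equivalent to the polynomial inequality
$$D\bigl(D-1-(\varepsilon+1)(q-\varepsilon-1)\bigr)\leq (D+\varepsilon)(D+\varepsilon-q).$$

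The core of the argument is a direct expansion of both sides. After expanding and cancelling, all degree-two terms in $D$, together with the mixed terms $2D\varepsilon$ and the terms $\pm qD$, cancel, and the difference collapses to the clean factorization
$$(D+\varepsilon)(D+\varepsilon-q)-D\bigl(D-1-(\varepsilon+1)(q-\varepsilon-1)\bigr)=(D-1)\,\varepsilon\,(q-\varepsilon).$$
Since $D-1=\delta\geq 0$, $\varepsilon=(t-1)\bmod q\geq 0$, and $0\leq\varepsilon\leq q-1$ forces $q-\varepsilon\geq 1>0$, this quantity is nonnegative, which proves the claim. I would close by noting that equality holds precisely when $\varepsilon=0$, that is, when $q$ divides $t-1$.

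The only mild obstacle is the bookkeeping in the expansion and confirming the factorization $(D-1)\varepsilon(q-\varepsilon)$; once the substitution $D=q^m+(t-1)(q-1)$ is in place this is routine algebra, and the positivity of the denominators is immediate from $m\geq 2$.
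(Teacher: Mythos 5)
Your proof is correct and takes essentially the same route as the paper's: both arguments cancel the common factor $q^{m+t-1}$, cross-multiply by the (positive) denominators, and reduce the claim to the nonnegativity of $\varepsilon(q-\varepsilon)\bigl(q^m+(t-1)(q-1)-1\bigr)$, which is exactly your factorization $(D-1)\,\varepsilon\,(q-\varepsilon)$. Your explicit check that the denominators are positive (via $m\geq 2$) and your remark that equality holds precisely when $q$ divides $t-1$ are small refinements the paper leaves implicit.
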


\begin{proof} It suffices to prove that the inequality
\begin{equation}\label{eq:spectral_SP3}
    q^{m+t-1}\frac{q^m-1+(t-1)(q-1)+(-\varepsilon-1)(q-\varepsilon-1)}{(q^m+(q-1)(t-1)+\varepsilon)(q^m+(q-1)(t-1)+\varepsilon-q)}\leq \frac{q^{m+t-1}}{q^m+(t-1)(q-1)}
\end{equation}
always holds. By multiplying both sides of the inequality by $\frac{1}{q^{m+t-1}}$ and both denominators, all of which are all positive values, and subtracting $(q^m+(t-1)(q-1))^2$ from both sides, we obtain an inequality equivalent to~(\ref{eq:spectral_SP3}), namely
\begin{multline*}
    (-1+(-\varepsilon-1)(q-\varepsilon-1))(q^m+(t-1)(q-1)) \leq \\  (2\varepsilon-q)(q^m+(t-1)(q-1))+\varepsilon(\varepsilon-q). 
\end{multline*}
The latter is equivalent to
$\varepsilon(\varepsilon-q)(q^m+(q-1)(t-1)-1) \leq 0$, which is always true since $\varepsilon\geq 0$, $q^m+(q-1)(t-1)-1>0$, and $\varepsilon-q<0$.
\end{proof}

It is easy to see that Singleton bound 
of Theorem~\ref{thm:non-induced}, 
in the case we are considering in this section, reduces to
$$A_q(\bfn,\bfm,3)\leq q^{t-2}.$$

\begin{lemma}\label{lem:n=1.beat_S3} 
Let $\bfn=(1,\dots,1)$, $\bfm=(m,1,\dots,1)$,
and $t\geq q^m+2$.
Then the approximation for $A_q(\bfn,\bfm,3)$ given by the Ratio-Type bound~(\ref{eq:alpha2_case.n=1}) is not worse than the approximation using the Singleton bound of Theorem~\ref{thm:non-induced}.
\end{lemma}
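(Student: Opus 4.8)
The plan is to sidestep the unwieldy rational expression in~(\ref{eq:alpha2_case.n=1}) entirely and instead argue by transitivity through a bound we have already analysed. The key observation is that Lemma~\ref{lem:n=1.beat_SP3} establishes, for exactly this family $\bfn=(1,\dots,1)$, $\bfm=(m,1,\dots,1)$ and with \emph{no} restriction on $t$, that the Ratio-Type bound in~(\ref{eq:alpha2_case.n=1}) is never worse than the Sphere-Packing bound of Theorem~\ref{thm:non-induced}. Consequently, to prove the statement it suffices to show that, under the hypothesis $t\geq q^m+2$, the Sphere-Packing bound is in turn not worse than the Singleton bound. Chaining the two comparisons then yields the claim.

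The remaining step is a one-line algebraic comparison. Recall that in the present case the Sphere-Packing bound reads $q^{m+t-1}/(q^m+(t-1)(q-1))$ and the Singleton bound reduces to $q^{t-2}$. I would thus aim to verify
$$\frac{q^{m+t-1}}{q^m+(t-1)(q-1)}\leq q^{t-2}.$$
Since the denominator is positive, clearing it and cancelling $q^{t-2}$ turns this into $q^{m+1}\leq q^m+(t-1)(q-1)$, i.e. $q^m(q-1)\leq(t-1)(q-1)$, i.e. $q^m\leq t-1$. This is equivalent to $t\geq q^m+1$, which is implied by the hypothesis $t\geq q^m+2$. Combining with Lemma~\ref{lem:n=1.beat_SP3} gives that the Ratio-Type bound is not worse than the Singleton bound.

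There is essentially no hard obstacle here; the only real insight is recognizing that routing through the Sphere-Packing bound collapses the problem to the trivial inequality $q^m\leq t-1$, whereas a head-on comparison of~(\ref{eq:alpha2_case.n=1}) with $q^{t-2}$ would force one to expand a quadratic in the quantity $q^m+(q-1)(t-1)$ while tracking the residue $\varepsilon=(t-1)\bmod q$ — doable, but considerably messier and offering no extra information. I would also record the observation that the threshold is sharp: at $t=q^m+1$ one has $\varepsilon=0$, and then the computation above gives equality, so all three bounds coincide there; this is precisely why the strict regime, and the hypothesis $t\geq q^m+2$, is natural.
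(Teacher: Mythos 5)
Your proof is correct, but it takes a genuinely different route from the paper's. The paper disposes of this lemma in one line, as a corollary of Theorem~\ref{thm:msrd}: there the Ratio-Type bound~(\ref{eq:alpha2_case.n=1}) is compared head-on with the Singleton value $q^{t-2}$, which forces exactly the computation you wanted to avoid — expanding a quadratic in $t$, replacing $\varepsilon$ by a free parameter $\varepsilon_0\in\{0,\dots,q-1\}$, solving the quadratic inequality and maximizing the root over $\varepsilon_0$ — and yields the strict inequality for all $t>q^m+1$. You instead chain through the Sphere-Packing bound: Lemma~\ref{lem:n=1.beat_SP3} (which indeed carries no restriction on $t$) gives that the Ratio-Type value is at most the Sphere-Packing value, and your one-line computation shows the Sphere-Packing value is at most $q^{t-2}$ exactly when $t\geq q^m+1$. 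The algebra checks out, and your sharpness remark is also right: at $t=q^m+1$ one has $\varepsilon=0$, which is precisely the equality case of Lemma~\ref{lem:n=1.beat_SP3}, so all three bounds coincide there. Your route buys several things: it is more elementary (no $\varepsilon$-bookkeeping), it proves marginally more (the non-strict claim already holds for $t\geq q^m+1$), it avoids the paper's forward reference to a theorem proved only later, and — since the Sphere-Packing value is \emph{strictly} below $q^{t-2}$ once $t\geq q^m+2$ — it even recovers the strict inequality needed for the $n=1$ case of Theorem~\ref{thm:msrd}, so it could serve as an alternative proof of that case. What the paper's approach buys is economy at the level of the whole paper: the quadratic computation must be carried out anyway to prove Theorem~\ref{thm:msrd}, including the $n=2$ and $n>2$ cases where it applies uniformly, so deriving the lemma from it avoids duplicated work, whereas your transitivity argument exploits how cleanly the Sphere-Packing and Singleton values happen to compare in this particular family.
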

\begin{proof}
This will follow as a corollary of Theorem~\ref{thm:msrd} below.
\end{proof}

Recall that the Total-Distance bound only applies when $$d>\sum\limits_{i=1}^t n_i -\sum\limits_{i=1}^t q^{-m_i}.$$ In the case analyzed in this section, this reduces to $3>t-q^{-m}-(t-1)/q$, which is in turn equivalent to $(t-1)q^{m-1}+1>(t-3)q^m$.

\begin{lemma}\label{lem:n=1.beat_TD3}
Let $\bfn=(1,\dots,1)$, $\bfm=(m,1,\dots,1)$, and $t\neq3$. Then the approximation for
$A_q(\bfn,\bfm,3)$
given by the Ratio-Type bound~(\ref{eq:alpha2_case.n=1}) is not worse than the approximation using the Total Distance bound of Theorem~\ref{thm:non-induced}, whenever the latter bound is applicable.
\end{lemma}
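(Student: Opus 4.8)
The plan is to write both bounds explicitly for the family $\bfn=(1,\dots,1)$, $\bfm=(m,1,\dots,1)$ and to reduce the claim to a polynomial comparison in $a:=q^m$. Put $D=q^m+(q-1)(t-1)$, so that the degree of $\Gamma(\spac)$ from Proposition~\ref{prop:reg} is $\delta=D-1$, and recall $\varepsilon=(t-1)\bmod q$. Since $(-\varepsilon-1)(q-\varepsilon-1)=-(\varepsilon+1)(q-1-\varepsilon)$, the numerator of (\ref{eq:alpha2_case.n=1}) is $\delta-(\varepsilon+1)(q-1-\varepsilon)$, so the Ratio-Type value is
\[
\mathrm{RT}=q^{m+t-1}\,\frac{\delta-(\varepsilon+1)(q-1-\varepsilon)}{(D+\varepsilon)(D+\varepsilon-q)} .
\]
Since $N=\sum_i n_i=t$ in this family, the Total Distance bound of Theorem~\ref{thm:non-induced} collapses to $\mathrm{TD}=\dfrac{3}{3-t+Q}$ with $Q=q^{-m}+(t-1)/q$; clearing $q^{-m}$ gives
\[
\mathrm{TD}=\frac{3q^m}{P},\qquad P=(3-t)q^m+(t-1)q^{m-1}+1 .
\]
Applicability of the Total Distance bound is precisely $P>0$, i.e. the inequality $(t-1)q^{m-1}+1>(t-3)q^m$ recorded before the statement.

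First I would extract what applicability buys us. Because $m\ge 2$, the condition $P>0$ reads $t<\frac{3q-1+q^{1-m}}{q-1}$ and forces $t$ to be small: after removing the excluded value $t=3$ (and using $t\ge 2$), only $t=2$ survives when $q\ge 4$, only $t\in\{2,4\}$ when $q=3$, and only $t\in\{2,4,5\}$ when $q=2$. Thus $t\neq 3$ together with applicability leaves finitely many pairs $(q,t)$ with $m\ge 2$ free, and in each of them $\varepsilon$ is a fixed small residue ($\varepsilon=1$ for every $t=2$; $\varepsilon=1$ for $(q,t)=(2,4)$ and $\varepsilon=0$ for $(q,t)=(3,4)$; $\varepsilon=0$ for $(q,t)=(2,5)$). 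This turns the lemma into a finite collection of one-variable inequalities in $m$.

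The core reduction is to clear denominators in $\mathrm{RT}\le\mathrm{TD}$. As $P>0$ and the factors $D+\varepsilon$ and $D+\varepsilon-q$ are positive, the claim is equivalent to
\[
q^{t-1}\bigl[\delta-(\varepsilon+1)(q-1-\varepsilon)\bigr]\,P\ \leq\ 3\,(D+\varepsilon)(D+\varepsilon-q) .
\]
Both sides are quadratic in $a=q^m$, so I would expand and compare the coefficients of $a^2$, of $a$, and the constants. The decisive term is the $a^2$-coefficient: on the left it equals $q^{t-2}\bigl[(3-t)q+(t-1)\bigr]=q^{t-2}\bigl[3q-1-t(q-1)\bigr]$, while on the right it is $3$. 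For each admissible $(q,t)$ one would first check $q^{t-2}[3q-1-t(q-1)]\leq 3$ and then verify that the linear and constant terms do not reverse the inequality for any $m\ge 2$.

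I expect the main obstacle to be exactly this leading-order comparison. The two bounds coincide in a genuinely tight boundary case, namely $(q,t)=(2,4)$ at $m=2$ (where both equal $4$), so near that case the inequality offers no slack and its truth hinges delicately on the size of $q^{t-2}[3q-1-t(q-1)]$ relative to $3$, as well as on the sign of $P$, which changes character exactly at the applicability threshold. The careful part is therefore to pair the coefficient bound with the applicability constraint so as to control the growth in $m$; where $q^{t-2}[3q-1-t(q-1)]$ is at or above $3$ the leading term no longer decides the comparison, and I would expect to need a finer, case-by-case estimate in $m$ (and possibly a comparison of the integer parts $\lfloor\mathrm{RT}\rfloor$ and $\lfloor\mathrm{TD}\rfloor$ rather than the real values) to close those remaining cases.
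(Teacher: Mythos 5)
Your reduction (clear denominators, use applicability of the Total Distance bound to confine $(q,t)$ to finitely many pairs, then compare coefficients in $a=q^m$) is the same strategy as the paper's proof, and your applicability analysis is in fact \emph{sharper} than the paper's own. The paper's proof claims that, beyond the two equality cases $(t,q)=(4,3)$ and $(t,q)=(5,2)$, strict applicability ``only holds for $t=3$'', and consequently it verifies the required inequality only for those two pairs. As you correctly observe, strict applicability also holds for $t=2$ (every $q$, every $m$) and for $(q,t)=(2,4)$; the paper silently skips exactly these cases.

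However, your plan cannot be completed, and the obstacle you flag at the end is not a matter of finer estimates: in precisely the cases where your leading-coefficient test $q^{t-2}\bigl[3q-1-t(q-1)\bigr]\le 3$ fails, the claimed inequality is false. For $t=2$ (where the coefficient is $q+1$) one computes from (\ref{eq:alpha2_case.n=1}) that $\mathrm{RT}=q(q^m-q+2)/(q^m+q)$ while $\mathrm{TD}=3q^{m+1}/\bigl((q+1)q^m+q\bigr)$; already for $(q,m)=(5,2)$ this gives $\mathrm{RT}=11/3$ (integer bound $3$) against $\mathrm{TD}=75/31$ (integer bound $2$). For $(q,t)=(2,4)$ (coefficient $4$) one gets $\mathrm{RT}=8\cdot 2^m/(2^m+4)$ and $\mathrm{TD}=6\cdot 2^m/(2^m+2)$, so $\mathrm{RT}>\mathrm{TD}$ for every $m\ge 3$; at $m=3$ this is $16/3$ (integer bound $5$) against $24/5$ (integer bound $4$). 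Hence no refinement---including comparing floors---can close these cases: the lemma as printed is false for them, and the paper's proof is erroneous exactly where your proposal stalls. The cases your test does pass ($t=2,q=2$; $(q,t)=(3,4)$; $(q,t)=(2,5)$) go through easily; the latter two reduce to $9/(3^m+6)\le 1$ and $16/(2^m+4)\le 3$, which are the only computations the paper actually performs. So your write-up should be recast not as a proof but as a (correct) case analysis that exposes a counterexample to the statement.
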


\begin{proof}
Observe that the inequality $(t-1)q^{m-1}+1>(t-3)q^m$ holds if and only if either $(t-1)q^{m-1}=(t-3)q^m$ or $(t-1)q^{m-1}>(t-3)q^m$. It is easy to see that, assuming $t\geq2$ and $q\geq2$, the latter inequality only holds for $t=3$, while the only integer solutions of the equation are $(t,q)=(4,3)$ and $(t,q)=(5,2)$.

To prove the lemma, we need to show that the inequality
\begin{equation}\label{eq:alpha2_TD3}
    q^{m+t-1}\frac{q^m-1+(t-1)(q-1)+(-\varepsilon-1)(q-\varepsilon-1)}{(q^m+(q-1)(t-1)+\varepsilon)(q^m+(q-1)(t-1)+\varepsilon-q)}\leq \frac{3}{3-t+\frac1{q^m}+\frac{t-1}q}
\end{equation}
holds for all cases when the Total Distance bound is applicable, excluding the case $t=3$.
If $t=4$ and $q=3$ then $\varepsilon=(4-1)\mod3=0$, and inequality~(\ref{eq:alpha2_TD3}) reduces to
\begin{align*}
  3^{m+3}\frac{3^m-1+6-2}{(3^m+6)(3^m+6-3)} &\leq \frac{3\cdot 3^m}{3\cdot3^m-4\cdot3^m+1+3\cdot3^{m-1}} \\
  3^2\frac{1}{3^m+6} &\leq 1,
\end{align*}
which is true for all positive $m$.
If $t=5$ and $q=2$, then $\varepsilon=(5-1)\mod2=0$. The inequality~(\ref{eq:alpha2_TD3}) becomes
\begin{align*}
    2^{m+4}\frac{2^m-1+4-1}{(2^m+4)(2^m+4-2)} &\leq \frac{3\cdot2^m}{3\cdot2^m-5\cdot2^m+1+4\cdot2^{m-1}}\\
    2^4\frac{1}{2^m+4} &\leq 3\\
    16-12&\leq 3\cdot2^m,
\end{align*}
which is true for all $m\geq1$.
\end{proof}

Note that if $t\geq q^m$, as in the assumptions of Lemma~\ref{lem:n=1.beat_S3}, then the condition $t\neq3$ of Lemma~\ref{lem:n=1.beat_TD3} follows automatically from $m>1$.

Using arguments similar to those introduced in Lemmas~\ref{lem:n=1.beat_SP3} and \ref{lem:n=1.beat_TD3}, one obtains that
for any $q$, $m$, and $t$ for which Lemma~\ref{lem:n=1.beat_S3} holds, our Ratio-Type bound is not worse than Induced Singleton and Induced Hamming bounds from Theorem~\ref{thm:induced} as well. It is also not worse than Induced Plotkin bound, whenever the latter is applicable. By combining all these observations we get the following results.

\begin{theorem}\label{thm:n=1.beat_everyone} 
Let $\bfn=(1,\dots,1)$, $\bfm=(m,1,\dots,1)$, and $t\geq q^m+2$. Then the approximation for
$A_q(\bfn,\bfm,3)$
given by the Ratio-Type bound~(\ref{eq:alpha2_case.n=1}) is not worse than the approximation using the Sphere-Packing, Singleton, Total-Distance, Induced Singleton, Induced Hamming, and Induced Plotkin bound for any $(t,q,m)$, whenever these bounds are applicable.
\end{theorem}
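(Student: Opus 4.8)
The plan is to prove the statement by assembling the comparison lemmas already established in this subsection and then filling in the three \emph{induced} bounds, which have not yet been treated. I keep the standing assumptions $m\geq 2$, $q$ a prime power, and $t\geq q^m+2$, and record the consequence $q^m\geq 4$, hence $t\geq 6$ and in particular $t\neq 3$. First I would dispose of the non-induced bounds: the Sphere-Packing comparison is Lemma~\ref{lem:n=1.beat_SP3} (valid for every $t$), the Singleton comparison is Lemma~\ref{lem:n=1.beat_S3} (whose hypothesis $t\geq q^m+2$ is exactly our assumption), and the Total-Distance comparison is Lemma~\ref{lem:n=1.beat_TD3}, whose side condition $t\neq 3$ holds automatically since $t\geq 6$.

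For the Induced Singleton bound I would argue by domination rather than by a fresh computation. In the present regime the non-induced Singleton bound reads $A_q(\bfn,\bfm,3)\leq q^{t-2}$, whereas the Induced Singleton bound of Theorem~\ref{thm:induced} equals $q^{m(N-d+1)}=q^{m(t-2)}$. Since $m\geq 2$ and $t-2\geq 0$, we have $m(t-2)\geq t-2$ and therefore $q^{m(t-2)}\geq q^{t-2}$; that is, the Induced Singleton bound is never smaller than the Singleton bound. Chaining this with Lemma~\ref{lem:n=1.beat_S3} shows the Ratio-Type bound~(\ref{eq:alpha2_case.n=1}) is not worse than the Induced Singleton bound.

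The Induced Hamming bound I would likewise dominate by the Sphere-Packing bound and then invoke Lemma~\ref{lem:n=1.beat_SP3}. For $d=3$ one has $\lfloor(d-1)/2\rfloor=1$ and $N=t$, so the Induced Hamming value is $q^{mt}/(1+t(q^m-1))$, while the Sphere-Packing value is $q^{m+t-1}/(q^m+(t-1)(q-1))$. Following the convention of the earlier lemmas I compare these (unfloored) expressions, so it suffices to show the former is at least the latter. Cross-multiplying and dividing by $q^{m+t-1}$, and using $mt-m-t+1=(m-1)(t-1)$, the desired inequality becomes
\[
q^{(m-1)(t-1)}\bigl(q^m+(t-1)(q-1)\bigr)\ \geq\ 1+t(q^m-1).
\]
The left-hand side is at least $q^{(m-1)(t-1)}q^m$ and the right-hand side is strictly less than $t q^m$, so it is enough that $q^{(m-1)(t-1)}\geq t$; this holds because $q^{(m-1)(t-1)}\geq 2^{\,t-1}\geq t$ for all $t\geq 1$. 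Hence the Ratio-Type bound is not worse than the Induced Hamming bound.

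It remains to address the Induced Plotkin bound, which is applicable only when $d>(q^m-1)N/q^m$, i.e.\ $t<3q^m/(q^m-1)$; since $q^m\geq 4$ the right-hand side is at most $4$, forcing $t\leq 3$ and contradicting $t\geq 6$. Thus the Induced Plotkin bound never applies in this regime and the corresponding claim is vacuous; the same happens for the Total-Distance bound, whose applicability forces $t\leq 5$. Collecting the six comparisons finishes the proof. The main obstacle is essentially bookkeeping: the only genuinely new estimate is the displayed Induced Hamming inequality, and the one point requiring care is tracking applicability, since in the range $t\geq q^m+2$ two of the listed bounds simply never apply. The real weight of the theorem therefore rests on the Sphere-Packing and Singleton comparisons, the latter deferred to Theorem~\ref{thm:msrd}.
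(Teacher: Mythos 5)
Your proposal is correct, and its skeleton (invoking Lemmas~\ref{lem:n=1.beat_SP3}, \ref{lem:n=1.beat_S3}, and \ref{lem:n=1.beat_TD3} for the three non-induced bounds, with $t\geq q^m+2\geq 6$ killing the side condition $t\neq 3$) is exactly the paper's. Where you genuinely diverge is in the treatment of the induced bounds: the paper merely asserts that these follow ``using arguments similar to'' the earlier lemmas, i.e.\ by repeating the direct algebraic comparisons against the Ratio-Type expression~(\ref{eq:alpha2_case.n=1}), whereas you dispose of them by domination and vacuity. Concretely, you observe that for $d=3$, $N=t$, $\bfn=(1,\dots,1)$, $\bfm=(m,1,\dots,1)$ the Induced Singleton value $q^{m(t-2)}$ dominates the Singleton value $q^{t-2}$ (since $m\geq 2$), so Lemma~\ref{lem:n=1.beat_S3} already settles it; that the Induced Hamming value dominates the Sphere-Packing value via the clean estimate
\begin{equation*}
q^{(m-1)(t-1)}\bigl(q^m+(t-1)(q-1)\bigr)\ \geq\ q^{(m-1)(t-1)}q^m\ \geq\ t\,q^m\ >\ 1+t(q^m-1),
\end{equation*}
so Lemma~\ref{lem:n=1.beat_SP3} settles it; and that Induced Plotkin (and indeed Total-Distance) is never applicable once $t\geq 6$, so those claims are vacuous. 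All of these steps check out: the applicability condition $d>(q^m-1)N/q^m$ does force $t\leq 3$ when $q^m\geq 4$, and the chain of floors is preserved by monotonicity. Your route buys two things over the paper's implied one: it avoids redoing any Ratio-Type computation for the induced bounds (reusing the two hard lemmas instead), and it makes explicit the conceptually useful fact that in this regime every induced bound is weaker than its non-induced counterpart, so the whole theorem really rests on the Sphere-Packing comparison and on Theorem~\ref{thm:msrd}. The paper's formulation, by contrast, leaves those comparisons as an unproved assertion, so your write-up is actually the more complete of the two.
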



\subsection{The case \texorpdfstring{$\bfn=(2,1,\dots,1)$}{n=(2,1,...,1)}, \texorpdfstring{$\bfm=(m,1,\dots,1)$}{m=(m,1,...,1)}, and \texorpdfstring{$d=3$}{d=3}}\label{subsection:n=2}
In this subsection we consider $\spac$ with $\bfn=(2,1,\dots,1)$ and $\bfm=(m,1,\dots,1)$ for some $t\geq2$ and $m\geq 2$. According to Proposition~\ref{prop:prod}, the graph is the Cartesian product of the bilinear form graph $\Gamma(\Mat(2,m,\Fq))$ and the Hamming-metric graph $H(t-1,q)$ with $q^{t-1}$ vertices. 

From the bound in~(\ref{eq:alpha2_bound}) we can deduce the bound on the size of a code with minimum distance $d=3$:

{\small
\begin{equation}\label{eq:alpha2_case.n=2}
A_q(\bfn,\bfm,3)\leq q^{2m+t-1}\frac{q^m + q^{m+1} - t + q (t-3 - \varepsilon) + (1 + \varepsilon)^2}{(1 + q^m (q+1) + q (t-3) - t + \varepsilon) (1 + q^m (q+1) + q (t-2) - t + \varepsilon)}.
\end{equation}
}

\begin{lemma}\label{lem:n=2.beat_SP3} Let $\bfn=(2,1,\dots,1)$ and $\bfm=(m,1,\dots,1)$.
Then the approximation for $A_q(\bfn,\bfm,3)$ given by the Ratio-Type bound~(\ref{eq:alpha2_case.n=2}) is not worse than the approximation using Sphere-Packing bound of Theorem~\ref{thm:non-induced}.
\end{lemma}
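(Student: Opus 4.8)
The plan is to mirror the argument of Lemma~\ref{lem:n=1.beat_SP3}, reducing the claimed inequality to a manifestly nonnegative quantity via a single change of variables. First I would record the two quantities to be compared. By Proposition~\ref{prop:reg}, with $\bfn=(2,1,\dots,1)$ and $\bfm=(m,1,\dots,1)$, the degree of $\Gamma(\spac)$ is $\delta=(q+1)(q^m-1)+(t-1)(q-1)$, and the graph has $q^{2m+t-1}$ vertices. Since the ball of radius $1$ has cardinality $\delta+1$, the Sphere-Packing bound of Theorem~\ref{thm:non-induced} for $d=3$ reads $A_q(\bfn,\bfm,3)\le q^{2m+t-1}/(\delta+1)$. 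Thus it suffices to show that the right-hand side of~(\ref{eq:alpha2_case.n=2}) is at most $q^{2m+t-1}/(\delta+1)$.

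The key step is the substitution $D:=\delta+1$. I would verify by direct expansion that the two factors of the denominator of~(\ref{eq:alpha2_case.n=2}) are exactly $D+\varepsilon-q$ and $D+\varepsilon$, and that the numerator (stripped of the prefactor $q^{2m+t-1}$) equals $\mathcal{N}:=D-q+\varepsilon(\varepsilon-q+2)$. For $m\ge 2$ the quantity $D$ is large, so both denominator factors are positive; after cancelling the common factor $q^{2m+t-1}>0$ and clearing these positive denominators, the inequality becomes $\mathcal{N}\cdot D\le (D+\varepsilon-q)(D+\varepsilon)$. Expanding both sides, the $D^2$ terms and the mixed terms $2D\varepsilon$ cancel, and the difference collapses to
$$(D+\varepsilon-q)(D+\varepsilon)-\mathcal{N}D=\varepsilon(\varepsilon-q)(1-D)=\delta\,\varepsilon\,(q-\varepsilon),$$
where I used $1-D=-\delta$. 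Since $\delta>0$, $\varepsilon\ge0$, and $\varepsilon=(t-1)\bmod q<q$, this quantity is nonnegative, which establishes the inequality and hence the lemma.

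I expect the only real obstacle to be the bookkeeping in this change of variables: one must check that the apparently unwieldy rational expression in~(\ref{eq:alpha2_case.n=2}) reparametrizes cleanly into the form above once $D=\delta+1$ is substituted, the subtlety being that the degree $\delta$ now carries the extra contribution $(q+1)(q^m-1)$ coming from the rank-two block rather than the $q^m-1$ of the $n=1$ case. Once this reparametrization is in place the inequality is immediate and requires no case analysis, exactly as in the $\bfn=(1,\dots,1)$ situation; indeed the final signed product $\delta\,\varepsilon\,(q-\varepsilon)$ has precisely the same shape as the factor $\varepsilon(\varepsilon-q)(q^m+(q-1)(t-1)-1)$ appearing in Lemma~\ref{lem:n=1.beat_SP3}, with the present $\delta$ playing the role of the $n=1$ degree.
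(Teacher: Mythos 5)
Your proof is correct, and I checked each of your algebraic claims: with $D=\delta+1=q^m(q+1)+q(t-2)-t+1$, the two denominator factors of~(\ref{eq:alpha2_case.n=2}) are indeed $D+\varepsilon-q$ and $D+\varepsilon$, the numerator stripped of $q^{2m+t-1}$ is indeed $\mathcal{N}=D-q+\varepsilon(\varepsilon-q+2)$, and
\begin{equation*}
(D+\varepsilon-q)(D+\varepsilon)-\mathcal{N}D=(1-D)\,\varepsilon(\varepsilon-q)=\delta\,\varepsilon\,(q-\varepsilon)\;\geq\;0 ,
\end{equation*}
since $0\leq\varepsilon<q$. Conceptually this is the same route as the paper's proof of Lemma~\ref{lem:n=2.beat_SP3} (cross-multiply and reduce to a manifestly true inequality), but two differences are worth recording. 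First, you compare the Ratio-Type bound against $q^{2m+t-1}/(\delta+1)$, i.e.\ against the true ball size $V_1(\spac)=\delta+1$, as Theorem~\ref{thm:non-induced} requires; the paper's own proof of this lemma writes the Sphere-Packing bound with denominator $(q+1)(q^m-1)+(t-1)(q-1)$, which equals $\delta$ rather than $\delta+1$ (contrast with the $n=1$ case, Lemma~\ref{lem:n=1.beat_SP3}, where $\delta+1$ is used correctly). Since $q^{2m+t-1}/(\delta+1)<q^{2m+t-1}/\delta$, your inequality is the sharper one and implies the paper's, so your argument proves the lemma exactly as stated, while the paper's literal computation establishes a formally weaker comparison. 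Second, your change of variables yields a closed form for the gap between the two bounds, namely $\delta\varepsilon(q-\varepsilon)$ after clearing denominators, which the paper's reduction to an unstructured polynomial inequality in $t,q,m,\varepsilon$ does not; in particular, your identity makes visible that the two bounds coincide precisely when $q\mid t-1$ (i.e.\ $\varepsilon=0$), an observation absent from the paper's proof.
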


\begin{proof}
    The Sphere-Packing bound reads
$$A_q(\bfn,\bfm,3)\leq \frac{q^{2m+t-1}}{(q+1)(q^m-1)+(t-1)(q-1)}.$$
Therefore the inequality we need to prove is:
{\footnotesize{
\begin{equation*}
    \frac{q^{2m+t-1}(q^m + q^{m+1} - t + q (t-3 - \varepsilon) + (1 + \varepsilon)^2)}{(1 + q^m (q+1) + q (t-3) - t + \varepsilon) (1 + q^m (q+1) + q (t-2) - t + \varepsilon)} \leq \frac{q^{2m+t-1}}{(q+1)(q^m-1)+(t-1)(q-1)}.
\end{equation*}
}}
This is equivalent to
{\footnotesize{
$$\frac{q^m + q^{m+1} - t + q (t-3 - \varepsilon) + (1 + \varepsilon)^2}{(1 + q^m (q+1) + q (t-3) - t + \varepsilon) (1 + q^m (q+1) + q (t-2) - t + \varepsilon)} \leq \frac1{(q+1)(q^m-1)+(t-1)(q-1)},$$}}
which is in turn equivalent to
$$t \leq \varepsilon(q^m(q+1)+q(q-1)(t-2)-3)(q-\varepsilon)+q^m(q+1)+q(t-3)+2\varepsilon+1.$$
The latter inequality is clearly true for all $t\geq2$ and $q\geq2$, since $t\leq q^m(q+1)+q(t-3)$ under these conditions.
\end{proof}

\begin{lemma}\label{lem:n=2.beat_S3}
Suppose $\bfn=(2,1,\dots,1)$, $\bfm=(m,1,\dots,1)$, and $t> (q^{2 m}-q^{m+1}-q^m+2 q-1)/(q-1)$.
Then the approximation for
$A_q(\bfn,\bfm,3)$
given by the Ratio-Type bound in~(\ref{eq:alpha2_case.n=2}) is not worse than the approximation given by the Singleton bound of Theorem~\ref{thm:non-induced}.
\end{lemma}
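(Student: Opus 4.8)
The plan is to make both sides explicit and reduce the comparison to an elementary polynomial inequality whose sign is controlled by the bounded range of $\varepsilon$.

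First I would determine what the Singleton bound of Theorem~\ref{thm:non-induced} becomes in this case. Since $d=3$ we have $d-1=2$, and with $\bfn=(2,1,\dots,1)$ the unique pair $(j,\delta)$ satisfying $d-1=\sum_{i=1}^{j-1}n_i+\delta$ and $0\le\delta\le n_j-1$ is $j=2$, $\delta=0$. Hence $\sum_{i=j}^t m_in_i-m_j\delta=\sum_{i=2}^t m_in_i=t-1$, so the Singleton bound reads $A_q(\bfn,\bfm,3)\le q^{t-1}$.

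Next I would set up the comparison. Abbreviate by $P=1+q^m(q+1)+q(t-3)-t+\varepsilon$ the first factor in the denominator of~(\ref{eq:alpha2_case.n=2}); the second factor is then exactly $P+q$. Dividing the target inequality ``Ratio-Type $\le$ Singleton'' by the positive quantity $q^{t-1}$ and clearing the positive denominator $P(P+q)$, the claim becomes
$$q^{2m}\bigl(q^m+q^{m+1}-t+q(t-3-\varepsilon)+(1+\varepsilon)^2\bigr)\le P(P+q).$$
The key algebraic step is to observe that the bracketed numerator equals $P+\varepsilon(\varepsilon-q+1)$: all terms in $q^m$, in $t$, and in $q(t-3)$ cancel against $P$, leaving only $\varepsilon(\varepsilon-q+1)$. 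The inequality is therefore equivalent to
$$q^{2m}\,\varepsilon(\varepsilon-q+1)\le P\,(P+q-q^{2m}).$$

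Finally I would argue by signs. Because $\varepsilon=(t-1)\mod q$ lies in $\{0,1,\dots,q-1\}$, the factor $\varepsilon(\varepsilon-q+1)$ is nonpositive, so the left-hand side is $\le 0$. For the right-hand side, $P>0$, so it suffices to show $P+q-q^{2m}>0$. Rewriting $q(t-3)-t=(q-1)t-3q$ and inserting the hypothesis $t>(q^{2m}-q^{m+1}-q^m+2q-1)/(q-1)$ yields $P>q^{2m}-q+\varepsilon$, hence $P+q-q^{2m}>\varepsilon\ge 0$; thus the right-hand side is strictly positive while the left-hand side is nonpositive, and the inequality holds. The only delicate point I anticipate is the bookkeeping in the simplification $q^m+q^{m+1}-t+q(t-3-\varepsilon)+(1+\varepsilon)^2=P+\varepsilon(\varepsilon-q+1)$ and checking that the threshold $P>q^{2m}-q$ matches the stated lower bound on $t$ exactly; once that is verified, the $\varepsilon\ge 0$ slack in the hypothesis absorbs the term $-\varepsilon$, and no case distinction on the value of $\varepsilon$ is needed.
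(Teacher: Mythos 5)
Your proof is correct, and it takes a genuinely different route from the paper's. The paper disposes of this lemma by deferring to Theorem~\ref{thm:msrd}: there the comparison between the Ratio-Type and Singleton bounds is recast as a quadratic inequality in $t$ with leading coefficient $-(q-1)^2$, the larger root is computed via the discriminant, and that root is then maximized over $\varepsilon_0\in\{0,\dots,q-1\}$ (the worst case being $\varepsilon_0=0$) to produce the stated threshold; moreover, that computation is written out only for $n=1$, with the $n=2$ case declared to follow ``in a similar way''. You instead verify the $n=2$ case directly: after correctly identifying the Singleton bound as $q^{t-1}$ (your bookkeeping $j=2$, $\delta=0$ is right) and writing the two denominator factors of~(\ref{eq:alpha2_case.n=2}) as $P$ and $P+q$, the identity ``numerator $=P+\varepsilon(\varepsilon-q+1)$'' collapses the whole comparison to $q^{2m}\,\varepsilon(\varepsilon-q+1)\le P\,(P+q-q^{2m})$, which then follows from pure sign considerations: the left side is nonpositive because $0\le\varepsilon\le q-1$, while the hypothesis on $t$ gives exactly $P>q^{2m}-q+\varepsilon$, making both $P$ and $P+q-q^{2m}$ strictly positive (I checked the algebra; the threshold matches on the nose). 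What your approach buys: no discriminant, no case analysis or optimization over $\varepsilon$, a strict inequality, and a self-contained proof of precisely the case the paper leaves implicit. What the paper's approach buys: by solving the quadratic exactly and taking the worst $\varepsilon_0$, it explains where the threshold $(q^{2m}-q^{m+1}-q^m+2q-1)/(q-1)$ comes from, whereas your argument only verifies that this given threshold suffices.
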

\begin{proof}
This will again follow from  Theorem~\ref{thm:msrd}.
\end{proof}

Similar to Section~\ref{subsection:n=1}, we can derive conditions under which the Ratio-Type bound~(\ref{eq:alpha2_case.n=2}) cannot perform worse than the Induced Singleton, Induced Hamming, Induced Plotkin, and Total-Distance bounds for sum-rank-metric codes described in Section~\ref{sec:code-bounds}.

\begin{theorem}\label{thm:n=2.beat_everyone} Let $d=3$, $\bfn=(2,1,\dots,1)$, $\bfm=(m,1,\dots,1)$, and $\smash{t> \frac{q^{2 m}-q^{m+1}-q^m+2 q-1}{q-1}}$.
Then the approximation given by 
the Ratio-Type bound in~(\ref{eq:alpha2_case.n=2}) is not worse than the approximation given by the Sphere-Packing, Singleton, Total-Distance, Induced Singleton, Induced Hamming, and Induced Plotkin bounds,
whenever these bounds 
are applicable.
\end{theorem}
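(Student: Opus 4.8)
The plan is to assemble the theorem from the pieces already in place, treating the six bounds in three groups. The Sphere-Packing comparison is exactly Lemma~\ref{lem:n=2.beat_SP3} (which holds with no restriction on $t$), and the Singleton comparison is exactly Lemma~\ref{lem:n=2.beat_S3} (which holds under the stated hypothesis on $t$, ultimately via Theorem~\ref{thm:msrd}). So the work left over is to handle the Induced Singleton, Induced Hamming, Total-Distance, and Induced Plotkin bounds. The guiding idea is that in the large-$t$ regime forced by the hypothesis the sum-rank Singleton bound is the tightest of the ``Singleton-type'' quantities, so it suffices to show that it is dominated by each induced bound and then invoke Lemma~\ref{lem:n=2.beat_S3}; the two remaining bounds will turn out to be inapplicable altogether.

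First I would make the sum-rank Singleton bound explicit. For $d=3$ and $\bfn=(2,1,\dots,1)$ the integers of Theorem~\ref{thm:non-induced} are $j=2$ and $\delta=0$, so the Singleton bound reads $q^{\sum_{i=2}^t m_i n_i}=q^{t-1}$. Against the Induced Singleton bound, which here equals $q^{m(N-d+1)}=q^{m(t-1)}$, the comparison $q^{t-1}\leq q^{m(t-1)}$ is immediate because $m\geq2$. Against the Induced Hamming bound I would note that $N=t+1$, so the bound is $\lfloor q^{m(t+1)}/(1+(t+1)(q^m-1))\rfloor$; since $q^{t-1}$ is an integer it suffices to verify $q^{t-1}\bigl(1+(t+1)(q^m-1)\bigr)\leq q^{m(t+1)}$, and after dividing by $q^{t-1}$ this reduces to $1+(t+1)(q^m-1)\leq q^{(m-1)t+m+1}$, which follows from $t+1\leq q^{t+1}$ together with $(m-1)t\geq t$ for $m\geq2$. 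Combining each of these with Lemma~\ref{lem:n=2.beat_S3} yields the claim for both induced bounds.

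Next I would dispose of the Total-Distance and Induced Plotkin bounds by showing they never apply in the stated range, so that the corresponding claims hold vacuously. For the Total-Distance bound the applicability condition $d>N-Q$ with $N=t+1$ and $Q=q^{-m}+(t-1)q^{-1}$ rearranges to $t<(2q-1+q^{1-m})/(q-1)$; for the Induced Plotkin bound the condition $d>(q^m-1)N/q^m$ rearranges to $t+1<3q^m/(q^m-1)$, which since $q^m\geq4$ forces $t\leq2$. Both thresholds sit far below the hypothesis threshold $(q^{2m}-q^{m+1}-q^m+2q-1)/(q-1)$, so under the assumption on $t$ neither of these two bounds is in force.

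The main obstacle is not any individual inequality — each is either an exponential-versus-linear estimate or a routine rearrangement — but rather the bookkeeping needed to certify that the two applicability thresholds genuinely lie below the Singleton-type threshold for \emph{all} admissible $q\geq2$ and $m\geq2$, so that the vacuous cases are truly vacuous and no boundary parameter slips through. This uniform threshold comparison is the step where I would be most careful.
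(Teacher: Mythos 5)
Your proposal is correct, and it fills in a proof that the paper itself never spells out: the paper states Theorem~\ref{thm:n=2.beat_everyone} with no proof, relying on Lemmas~\ref{lem:n=2.beat_SP3} and~\ref{lem:n=2.beat_S3} together with the remark that the remaining comparisons follow by ``arguments similar to Section~\ref{subsection:n=1}'', where the template (Lemmas~\ref{lem:n=1.beat_SP3}--\ref{lem:n=1.beat_TD3}) is a sequence of direct algebraic comparisons of the Ratio-Type value against each competing bound, including explicit case analysis for the Total-Distance bound in its few applicable instances. Your route differs in a useful way: rather than redoing Ratio-Type-versus-bound algebra for the Induced Singleton and Induced Hamming bounds, you chain through the Singleton value $q^{t-1}$ (correctly computed from $j=2$, $\delta=0$) via Lemma~\ref{lem:n=2.beat_S3}, reducing everything to the elementary estimates $q^{t-1}\leq q^{m(t-1)}$ and $1+(t+1)(q^m-1)\leq q^{(m-1)t+m+1}$; and you dispose of Total-Distance and Induced Plotkin by showing their applicability conditions are incompatible with the hypothesis on $t$, which is indeed the situation here (unlike in Section~\ref{subsection:n=1}, where Total-Distance has genuinely applicable cases that must be checked by hand). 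The threshold comparisons you flag as delicate do go through uniformly: for Total-Distance one needs $q^{1-m}\leq q^m(q^m-q-1)$, which holds since $q^m\geq q+2$ for $q\geq2$, $m\geq2$; for Induced Plotkin, applicability forces $t\leq2$ while the hypothesis threshold $\bigl(q^m(q^m-q-1)+2q-1\bigr)/(q-1)$ exceeds $2$, so $t\geq3$. What your approach buys is brevity and robustness (only exponential-versus-linear inequalities, no new manipulation of the Ratio-Type expression); what the paper's direct-comparison style buys is information outside the large-$t$ regime, since comparisons routed through Lemma~\ref{lem:n=2.beat_S3} are only available under the hypothesis on $t$, whereas, e.g., the Sphere-Packing comparison holds unconditionally.
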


We conclude this subsection with two remarks on other techniques that can be applied in the context investigated in this paper.

\subsection{Some Computational Results}\label{subsec:computationalboundcomparisson}

We include some tables to illustrate the performance of our eigenvalue bounds, in comparison with the best currently known bounds.
As the tables show, our bound improve on the state of the art for several parameters.

Table~\ref{tab:d=3} and Table~\ref{tab:d=4} list the graphs $\Gamma(\spac)$ on $|V|$ vertices, $|V|\leq1000$, which are not distance-regular and for which the Ratio-Type bound on $\alpha_2$ or $\alpha_3$ does not perform worse than the bounds of
Theorems~\ref{thm:induced} and~\ref{thm:non-induced}.

The columns labeled ``$\alpha_2$'' and ``$\alpha_3$'' contain the exact value of the $k$-independence number, for $k=2,3$.
We write ``time'' if the computation takes a long time on a standard laptop.
Only cases for which $\alpha_k\geq2$ are included.

The columns $\vartheta_k$, $k=2,3$, contain the Lov\'asz theta function of the graph that has the same vertex set as $\Gamma(\spac)$ and where two vertices are adjacent if and only if the distance between them in $\Gamma(\spac)$ is at most $k$. This value also provides an upper bound on $\alpha_k$; see Remark~\ref{rem:theta}.

The columns ``RT$_{d-1}$'' (for $d=3,4$) give the values of the Ratio-Type bound, given by Theorems~\ref{thm:hoffman-k=2} and~\ref{thm:hoffman-k=3}.
The columns ``iS$_d$'', ``iH$_d$'', ``iP$_d$'', ``iE$_d$'', ``S$_d$'', ``SP$_d$'', ``PSP$_d$'', ``TD$_d$'' (for $d=3,4$) give the values of the Induced Singleton, Induced Hamming, Induced Plotkin, Induced Elias, Singleton, Sphere-Packing, Projective Sphere-Packing, and Total-Distance bounds, respectively. These can be found in Section~\ref{sec:srmc}. 
When a bound is not applicable,
we return $0$ as value.
By definition, the values of ``SP$_3$'' and ``PSP$_3$'' are the same, so only ``SP$_3$'' is included in the table. The entries of the column ``RT$_{d-1}$'' in bold correspond to the instances in which the Ratio-Type bound performs better than the applicable code-theoretical bounds.

Since the code cardinality is always an integer, the columns only contain integer values, which are sometimes obtained by taking the floor of the real value given by the bound.

\begin{table}[!htbp]
{\tiny
    \centering
\[
\begin{array}{|ccll|c|c|cc|cccc|ccc|}
\hline
t & q & \bfn & \bfm & |V| & \text{RT}_2 & \alpha_2 & \vartheta_2 & \text{iS}_3 & \text{iH}_3 & \text{iP}_3 & \text{iE}_3 &  \text{S}_3 & \text{SP}_3 &  \text{TD}_3 \\
\hline
 2 & 2 &             (2, 1)            &             (2, 1)            &  32 &   2    &   2 &  2  &  4  &  6  &  4 & 6  &  2  &  2   &  2\\
 2 & 2 &             (2, 1)            &             (2, 2)            &  64 &   4    &   4  & 4  &  4  &  6  &  4  & 6 & 4  &  4   &  4  \\
 2 & 2 &             (2, 1)            &             (3, 3)            & 512 &   8    &   8  & 8 &  8  &  23 &  8 & 17 &  8  &  17  &  8 \\  
 2 & 2 &             (2, 2)            &             (2, 2)            & 256 & \bf{11}&  9   & 10  &  16 &  19 &  0 & 34  &  16 &  13  &  0\\  
 2 & 3 &             (2, 1)            &             (2, 2)            & 729 &   9    &   9  & 9  &  9  &  29 &  9  & 20 &  9  &  17 &  9  \\
 3 & 2 &           (1, 1, 1)           &           (2, 1, 1)           &  16 &   2    &   2  & 2  &  4  &  6  &  4  & 6 &  2  &  2 &  2  \\
 3 & 2 &           (2, 1, 1)           &           (2, 2, 2)           & 256 &   16   &   16  & 16 &  16 &  19 &  0  & 34 &  16 &  16 & 0 \\ 
 3 & 2 &           (2, 2, 1)           &           (2, 2, 1)           & 512 &  25  & 18  & 20 &  64 &  64 &  0 & 151 &  32 &  25 &  0 \\ 
 4 & 2 &          (1, 1, 1, 1)         &          (2, 1, 1, 1)         &  32 &   4    &   4   & 4 &  16 &  19 &  0  & 32 &  4  &  4  &  4  \\
 4 & 2 &          (2, 1, 1, 1)        &          (2, 2, 1, 1)         & 256 &   16   &   16 & 16  &  64 &  64 &  0  & 151 &  16 &  17 &  0 \\ 
 4 & 2 &          (2, 1, 1, 1)         &          (2, 2, 2, 1)         & 512 & \bf{28}& \text{time} & 24 & 64 &  64 &  0 & 151  &  32 &  30 & 0 \\  
 5 & 2 &        (1, 1, 1, 1, 1)        &        (2, 1, 1, 1, 1)      &  64 &   8    &   8  & 8  &  64 &  64 &  0  & 64 &  8  &  8  & 12 \\
 5 & 2 &        (2, 1, 1, 1, 1)        &        (2, 2, 1, 1, 1)        & 512 &   32   &   32 & 32 & 256 & 215 &  0  & 512 & 32 &  32 & 0  \\ 
 6 & 2 &     (1, 1, 1, 1, 1, 1) &  (2, 1, 1, 1, 1, 1)  & 128 & \bf{12}  &   9  & 12  & 128 & 128 &  0  & 128 &  16 &  14 &  0  \\
 6 & 2 &   (2, 1, 1, 1, 1, 1)    &    (2, 1, 1, 1, 1, 1)   & 512 &  32  &   \text{time}  & 32  & 512 & 512 &  0 & 512 &  32 &  34 & 0  \\
 7 & 2 &  (1, 1, 1, 1, 1, 1, 1)  &  (2, 1, 1, 1, 1, 1, 1) & 256 &  25  &  \text{time}  & 25 & 256 & 256 &  0  & 256 &  32 &  25 & 0 \\ 
 8 & 2 &    (1, 1, 1, 1, 1, 1, 1, 1)   &    (2, 1, 1, 1, 1, 1, 1, 1)   & 512 &  \bf{42} &  \text{time}  & 42  & 512 & 512 &  0  & 512 &  64 &  46 &  0  \\
\hline
\end{array}
\]
}
    \caption{The non-distance-regular sum-rank-metric graphs with $\leq1000$ vertices for which our eigenvalue bound for $A_q(\bfn,\bfm,3)$  is not worse than the approximation by a known bound. Improvements with respect to the best bound currently known are in bold.
    }
    \label{tab:d=3}
\end{table}

\begin{table}[!htbp]
\centering
{\tiny
\[
\begin{array}{|ccll|c|c|cc|cccc|cccc|}
\hline
 t & q & \bfn & \bfm& |V| & \text{RT}_3  & \alpha_3  & \vartheta_3 & \text{iS}_4 & \text{iH}_4 & \text{iP}_4 & \text{iE}_4 & \text{S}_4 & \text{SP}_4 & \text{PSP}_4 & \text{TD}_4 \\
\hline
 4 & 2 &          (1, 1, 1, 1)         &          (2, 1, 1, 1)         &  32 &   2    &   2  & 2   &  4  &  19 &  4 & 5 & 2  &  4  &  2   &  2 \\
 4 & 2 &          (2, 1, 1, 1)         &          (2, 1, 1, 1)         & 128 &   4    &   4  & 4  &  16 &  64 &  16 & 27 & 4  &  9  &  8   &  4 \\
 6 & 2 &   (2, 1, 1, 1, 1, 1)    &    (2, 1, 1, 1, 1, 1)   & 512  &  16  &   9   & 16  & 256 & 512 &  0 & 407 & 16 &  34 &  32  &  0 \\
\hline
\end{array}
\]
}
    \caption{The non-distance-regular sum-rank-metric graphs with $\leq1000$ vertices for which our eigenvalue bound for $A_q(\bfn,\bfm,4)$ is not worse than the approximation by a known bound. 
    Improvements with respect to the best bound currently known are in bold.
    }
    \label{tab:d=4}
\end{table}

\newpage
\section{Applications to MSRD Codes}\label{section:applications}
\label{sec:6}
We conclude this paper with a short section addressing the problem of the block length of a code meeting the 
Singleton bound in Theorem~\ref{thm:non-induced}. Such a code is called \textbf{MSRD} (\textbf{Maximum Sum-Rank-Distance Code}). MSRD codes generalize the celebrated MDS codes in the Hamming metric, which are codes $\mC \subseteq \F_q^n$ whose dimension $k$ and minimum distance $d$ satisfy $k =n-d+1$. 
It is a long-standing open problem to prove that
the length of a linear MDS code cannot exceed $q+1$, with very few exceptions.
This is a celebrated conjecture by Segre; see~\cite{segre1955curve}.
The block length of linear MSRD codes was investigated in~\cite{byrne2021fundamental}, with existence criteria obtained via weight distributions; see~\cite[Section~6]{byrne2021fundamental}. Later in~\cite{SZ2022} a stronger upper bound on the block length was introduced for certain parameters. In this section we apply eigenvalue methods and obtain results that are different from those of~\cite{byrne2021fundamental} in two ways: the parameters to which our eigenvalue bounds apply are different, and we are able to apply our results to codes that are not necessarily linear. In particular, linearity is not one of the assumptions of our results.

The next result shows that
certain MSRD codes with minimum distance $d=3$ have block length~$t$ relatively small. The parameters investigated in~\cite[Theorem VI.12]{byrne2021fundamental} are different from those studied here.

\begin{theorem}\label{thm:msrd}
    Let $\bfn=(n,1,\dots,1)$ and $\bfm=(m,1,\dots,1)$ for some $t$, and suppose there exists an MSRD code $\mC \subseteq \spac$ of minimum distance $d=3$. Then
    $$t\leq\begin{cases}
        1+q^m & \mbox{if } n=1, \\
        \frac{q^{2 m}-q^{m+1}-q^m+2 q-1}{q-1} & \mbox{if } n=2, \\
        \frac{q^{2 m+1}-q^{2 m}-q^{m+n}+q^m+q^n+q^2-3 q+1}{(q-1)^2} & \mbox{if } n>2.
    \end{cases}$$
\end{theorem}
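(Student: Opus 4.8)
The plan is to combine the code–graph dictionary of Corollary~\ref{coro:equivalence} with the explicit Ratio-Type bound of Theorem~\ref{thm:rt-bound_family}, turning the existence of an MSRD code into a single inequality on the degree $\delta$ of $\Gamma(\spac)$. First I would record what ``MSRD'' means in each case: for $d=3$ the integers $j,\delta$ of the Singleton bound in Theorem~\ref{thm:non-induced} are $j=3$ (when $n=1$), $j=2$ (when $n=2$) and $j=1$ (when $n>2$), so an MSRD code has cardinality exactly $q^{t-2}$, $q^{t-1}$ and $q^{m(n-2)+t-1}$, respectively. By Corollary~\ref{coro:equivalence} this cardinality equals $A_q(\bfn,\bfm,3)=\alpha_2(\Gamma(\spac))$, which Theorem~\ref{thm:rt-bound_family} bounds from above. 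Hence the mere existence of an MSRD code forces the chain $\text{(Singleton value)}\le\alpha_2(\Gamma(\spac))\le\text{(Ratio-Type value)}$; the whole proof is the analysis of the resulting inequality $\text{(Singleton value)}\le\text{(Ratio-Type value)}$.

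Next I would feed in the spectral data already used in Theorem~\ref{thm:rt-bound_family} (coming from Proposition~\ref{prop:evals} and Theorem~\ref{thm:hoffman-k=2}): the eigenvalues flanking $-1$ are $\theta_0=\delta$, $\theta_i=-1-\varepsilon$ and $\theta_{i-1}=q-1-\varepsilon$, where $\varepsilon=(t-1)\bmod q$. Writing the Ratio-Type bound as $|V|\,(\theta_0+\theta_i\theta_{i-1})/[(\theta_0-\theta_i)(\theta_0-\theta_{i-1})]$ and observing that $|V|/\text{(Singleton value)}$ equals $q^{m+1}$ when $n=1$ and $q^{2m}$ when $n\ge 2$, the common factor $q^{t}$ cancels and the inequality collapses to the uniform form
\[
(\theta_0-\theta_i)(\theta_0-\theta_{i-1})\le q^{e}\,(\theta_0+\theta_i\theta_{i-1}),
\qquad
e=\begin{cases} m+1, & n=1,\\ 2m, & n\ge 2.\end{cases}
\]
The three cases of the statement differ only through the value of $e$ and through the expression of $\delta$ in terms of $t$.

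The heart of the argument is the substitution $W:=\theta_0-\theta_i=\delta+1+\varepsilon$. A short computation gives $\theta_0+\theta_i\theta_{i-1}=W+c$ and $(\theta_0-\theta_i)(\theta_0-\theta_{i-1})=W(W-q)$ with $c=(\varepsilon+1)(\varepsilon-q)$, so the displayed inequality becomes the quadratic condition $g(W):=W^2-(q+q^{e})W-q^{e}c\le 0$. Evaluating $g$ at $W=q^{e}+\varepsilon$ I expect the clean identity $g(q^{e}+\varepsilon)=\varepsilon(q-\varepsilon)(q^{e}-1)\ge 0$; since $q^{e}+\varepsilon$ lies to the right of the vertex $(q+q^{e})/2$ of this upward parabola, $g(W)\le 0$ forces $W\le q^{e}+\varepsilon$, that is, $\delta+1\le q^{e}$. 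This is the step that dissolves the awkward dependence on $\varepsilon=(t-1)\bmod q$: the resulting bound $\delta+1\le q^{e}$ is free of $\varepsilon$, and equality (the tightest case) occurs exactly when $\varepsilon=0$, i.e. $t\equiv 1\pmod q$, which is consistent with the extremal $t$ in each case.

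Finally I would unwind $\delta$. Substituting $\delta=\tfrac{(q^n-1)(q^m-1)}{q-1}+(t-1)(q-1)$ into $\delta+1\le q^{e}$ and solving for $t$ yields the three stated bounds; for $n=2$ the factorization $q^2-1=(q-1)(q+1)$ lets one reduce the denominator from $(q-1)^2$ to $q-1$, which is why the $n=2$ and $n>2$ formulas are displayed differently even though they are the same expression. I expect the only genuine obstacle to be the bookkeeping in the reduction: getting the cancellation of $q^{t}$ exactly right so that the correct exponent $e$ appears, and verifying the identity $g(q^{e}+\varepsilon)=\varepsilon(q-\varepsilon)(q^{e}-1)$, which is precisely what makes the bound on $t$ hold uniformly over the residue $\varepsilon$ rather than only on the nose at $\varepsilon=0$.
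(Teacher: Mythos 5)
Your proposal is correct, and it shares the paper's overall skeleton: fix the MSRD cardinality via the Singleton bound ($q^{t-2}$, $q^{t-1}$, $q^{m(n-2)+t-1}$ in the three cases), invoke Corollary~\ref{coro:equivalence}, and play that cardinality against the Ratio-Type bound built from the spectral data $\theta_0=\delta$, $\theta_i=-1-\varepsilon$, $\theta_{i-1}=q-1-\varepsilon$ of Theorem~\ref{thm:rt-bound_family}. Where you genuinely diverge is in the analysis of the resulting inequality. The paper (which writes out only $n=1$ and declares the other cases ``similar'') expands it as a quadratic in $t$, solves it by the quadratic formula --- producing a square-root expression in $q$, $m$ and the residue $\varepsilon_0$ --- and then argues that the worst case over $\varepsilon_0\in\{0,\dots,q-1\}$ occurs at $\varepsilon_0=0$, yielding $t\le q^m+1$. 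You instead pass to $W=\theta_0-\theta_i=\delta+1+\varepsilon$, reduce to $g(W)=W^2-(q+q^e)W-q^ec\le 0$ with $c=(\varepsilon+1)(\varepsilon-q)$, and evaluate at the test point $q^e+\varepsilon$: the identity $g(q^e+\varepsilon)=\varepsilon(q-\varepsilon)(q^e-1)\ge 0$ does hold (I checked it, as well as the vertex condition $q^e+\varepsilon\ge(q+q^e)/2$, which follows from $q^e>q$), so convexity forces $W\le q^e+\varepsilon$ and hence $\delta+1\le q^e$ with $\varepsilon$ cancelling automatically. Your route buys three things the paper's proof does not make explicit: all three cases are treated uniformly through the single exponent $e$ (via $|V|/|\mC|=q^{m+1}$ or $q^{2m}$, both of which you computed correctly); no square roots or optimization over the residue are needed; and the intermediate statement $\delta+1\le q^e$ isolates the structural reason for the bounds --- it is exactly the graph's regularity that is capped --- after which the three displayed formulas are mere arithmetic (your observation that the $n=2$ and $n>2$ formulas are the same expression in reduced form is also correct). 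One small point: you assemble the bound directly from Theorem~\ref{thm:hoffman-k=2} and the eigenvalues rather than from the displayed formula~(\ref{eq:alpha2_bound}); this is the right move, and it is consistent with the specialization~(\ref{eq:alpha2_case.n=1}) that the paper itself uses in its proof, so no gap arises.
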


\begin{proof}
    We first note that, by definition and  Theorem~\ref{thm:non-induced}, we have
    $$|\mC|=\begin{cases}
    q^{t-2} & \mbox{if } n=1,\\
    q^{t-1} & \mbox{if } n=2,\\
    q^{m(n-2)+t-1} & \mbox{if } n>2.
    \end{cases}$$
Here we provide the proof for the case $n=1$. The cases $n=2$ and $n>2$ can be shown in a similar way. Consider the inequality
$$q^{m+t-1}\frac{q^m-1+(q-1)(t-1)+(-\varepsilon-1)(q-\varepsilon-1)}{(q^m+(q-1)(t-1)+\varepsilon)(q^m+(q-1)(t-1)+\varepsilon-q)} < q^{t-2}.$$
The left hand is the value of the Ratio-Type bound~(\ref{eq:alpha2_case.n=1}) for $n=1$, which is an upper bound on the size of $\mC$, assuming $\varepsilon=(t-1)\mod q$. Since $|\mC|=q^{t-2}$, if for some $t$ the inequality is true, then an MSRD code cannot exist for this $t$.
We divide both sides of the inequality by $q^{t-2}$ and rewrite it as follows:
{\small{
\begin{multline*}
-(q-1)^2t^2+(q-1) (q^{m+1} - 2 q^m + 3 q-2\varepsilon - 2)t+\\
+ (q-1) (q^m+1) (q^m - 2 q + 1) + \varepsilon^2 (q^{m+1}-1) - \varepsilon (q^{m+2}-2q^{m+1} + 2 q^m-3q + 2)<0.
\end{multline*}
}}
We replace $\varepsilon$ by a variable $\varepsilon_0$ that takes integer values in range from $0$ to $q-1$ and introduce the function
\begin{multline*}
    F(t,q,m,\varepsilon_0) = -(q-1)^2t^2+(q-1) (q^{m+1} - 2 q^m + 3 q-2\varepsilon_0 - 2)t+ \\ + (q-1) (q^m+1) (q^m - 2 q + 1)+ \varepsilon_0^2 (q^{m+1}-1) - \varepsilon_0 (q^{m+2}-2q^{m+1} + 2 q^m-3q + 2)
\end{multline*}
Note that $F(t,q,m,\varepsilon_0)$ is quadratic in $t$ with $F(t,q,m,\varepsilon_0)\to-\infty$ as $t$ approaches infinity. Then $F(t,q,m,\varepsilon_0)<0$ is true for
$$t>\frac{q \sqrt{4 \varepsilon_0 (\varepsilon_0+1) q^{m-1}-2 (2 \varepsilon_0+1) q^m+q^{2 m}+1}-2 \varepsilon_0+q^{m+1}-2 q^m+3 q-2}{2 (q-1)}.$$
Finally, we remove the dependency on $\varepsilon_0$ by assigning it a value in the range from $0$ to $q-1$ that maximizes the right hand side. For this, note that the expression under the square root can be seen as a quadratic function of $\varepsilon_0$ with its minimum at $\varepsilon_0=\frac{q-1}2$. Hence this expression is maximized by assigning either $0$ or $q-1$ to  $\varepsilon_0$. Since $\varepsilon_0=0$ also maximizes the term $-2\varepsilon_0$, this is the assignment we select. By substituting $\varepsilon_0=0$ we get
$$t>q^m+1.$$
Hence an MSRD code can only exist if $t\leq q^m+1$.
\end{proof}

Clearly, MSRD codes cannot exist when the Singleton bound of Theorem~\ref{thm:non-induced}
is not tight.
Table~\ref{tab:no_msrd} contains a list of parameters for sum-rank-metric graphs $\Gamma(\spac)$ on $|V|$ vertices, $|V|\leq 10^5$, which cannot contain an MSRD code because
of the constraints imposed by our Ratio-Type bounds from Theorems~\ref{thm:hoffman-k=2} and \ref{thm:hoffman-k=3}. We exclude graphs
with $\bfn=(1,\dots,1)$, i.e., situations that can be reduced to the Hamming metric with a mixed alphabet. Other graphs that are excluded from the table correspond to $\bfm=(m,\dots,m)$ for some integer $m$, 
as we could not find instances where our Ratio-Type bounds give a stronger existence criterion than that presented in~\cite{byrne2021fundamental}.

Table~\ref{tab:no_msrd} includes the values of the Ratio-Type bound ``RT$_{d-1}$'' as well as other bounds of Theorems~\ref{thm:induced} and~\ref{thm:non-induced}: Induced Singleton (``iS$_d$''), Induced Hamming (``iH$_d$''), Induced Elias (``iE$_d$''), Singleton (``S$_d$'', which would also be the size of an MSRD code in case it exists), Sphere-Packing (``SP$_d$''), and Projective Sphere-Packing bounds (``PSP$_d$''). Note that in each case presented in Table~\ref{tab:no_msrd}, both the Induced Plotkin bound of Theorem~\ref{thm:induced} and the Total Distance bound of Theorem~\ref{thm:non-induced} cannot be applied, since the conditions mentioned in the respective theorems are not satisfied. Finally, Table~\ref{tab:no_msrd} contains the parameters for which the value of the Ratio-Type bound is the \textit{only} bound that
outperforms the Singleton bound. In other words,
the table only contains parameters for which the  existence of an MSRD code cannot be excluded
by applying any of the known bounds.

\begin{table}[!htbp]
\tiny
    \centering
    \[
    \begin{array}{|ccll|ccc|ccc|ccc|}
\hline
 t  & q  & \bfn  & \bfm  & d  & |V| & \text{RT}_{d-1}  & \text{iS}_d  & \text{iH}_d  & \text{iE}_d  & \text{S}_d  & \text{SP}_d  & \text{PSP}_d  \\
\hline
 3  & 2  & (3, 2, 1)  & (3, 2, 2)  & 3  & 32768  & 494  & 4096  & 6096  & 15362  & 512  & 528  & 528  \\
 4  & 2  & (3, 1, 1, 1)  & (3, 3, 2, 2)  & 3  & 65536  & 989  & 4096  & 6096  & 15362  & 1024  & 1040  & 1040  \\
 4  & 2  & (2, 2, 2, 1)  & (2, 2, 2, 1)  & 4  & 8192  & 98  & 256  & 744  & 407  & 128  & 282  & 204  \\
 5  & 2  & (2, 2, 1, 1, 1)  & (2, 2, 2, 2, 1)  & 4  & 8192  & 107  & 256  & 744  & 407  & 128  & 315  & 292 \\
 5  & 2  & (2, 2, 2, 1, 1)  & (2, 2, 2, 1, 1)  & 4  & 16384  & 193  & 1024  & 2621  & 1419  & 256  & 546  & 409 \\
 5  & 2  & (2, 2, 2, 1, 1)  & (2, 2, 2, 2, 1)  & 4  & 32768  & 338  & 1024  & 2621  & 1419  & 512  & 1024  & 819 \\
 6  & 2  & (2, 1, 1, 1, 1, 1)  & (2, 2, 2, 2, 2, 1)  & 4  & 8192  & 119  & 256  & 744  & 407  & 128  & 356  & 512  \\
 6  & 2  & (2, 2, 1, 1, 1, 1)  & (2, 2, 2, 1, 1, 1)  & 4  & 8192  & 123  & 1024  & 2621  & 1419  & 128  & 327  & 292  \\
 6  & 2  & (2, 2, 1, 1, 1, 1)  & (2, 2, 2, 2, 1, 1)  & 4  & 16384  & 212  & 1024  & 2621  & 1419  & 256  & 606  & 585  \\
 6  & 2  & (2, 2, 1, 1, 1, 1)  & (2, 2, 2, 2, 2, 1)  & 4  & 32768  & 371  & 1024  & 2621  & 1419  & 512  & 1129  & 1170  \\
 6  & 2  & (2, 2, 2, 1, 1, 1)  & (2, 2, 2, 1, 1, 1)  & 4  & 32768  & 378  & 4096  & 9362  & 5026  & 512  & 1057  & 819  \\
 6  & 2  & (2, 2, 2, 1, 1, 1)  & (2, 2, 2, 2, 1, 1)  & 4  & 65536  & 673  & 4096  & 9362  & 5026  & 1024  & 1985  & 1638  \\
 7  & 2  & (2, 1, \dots, 1)  & (2, 1, \dots, 1)  & 4  & 1024  & 30  & 1024  & 1024  & 1024  & 32  & 64  & 64  \\
 7  & 2  & (2, 1, \dots, 1)  & (2, \dots, 2, 1, 1)  & 4  & 16384  & 235  & 1024  & 2621  & 1419  & 256  & 682  & 1024  \\
 7  & 2  & (2, 1, \dots, 1)  & (2,\dots, 2, 1)  & 4  & 32768  & 397  & 1024  & 2621  & 1419  & 512  & 1260  & 2048  \\
 7  & 2  & (2, 2, 1, \dots, 1)  & (2, 2, 2, 1, \dots, 1)  & 4  & 16384  & 246  & 4096  & 9362  & 5026  & 256  & 630  & 585  \\
 7  & 2  & (2, 2, 1, \dots, 1)  & (2, \dots, 2, 1, 1, 1)  & 4  & 32768  & 422  & 4096  & 9362  & 5026  & 512  & 1170  & 1170 \\
 7  & 2  & (2, 2, 1, \dots, 1)  & (2, \dots, 2, 1, 1)  & 4  & 65536  & 733  & 4096  & 9362  & 5026  & 1024  & 2184  & 2340  \\
 7  & 2  & (2, 2, 2, 1, \dots, 1)  & (2, 2, 2, 1, \dots, 1)  & 4  & 65536  & 755  & 16384  & 33825  & 18037  & 1024  & 2048  & 1638  \\
 8  & 2  & (2, 1, \dots, 1)  & (2, 1, \dots, 1)  & 4  & 2048  & 57  & 2048  & 2048  & 2048  & 64  & 120  & 128  \\
 8  & 2  & (2, 1, \dots, 1)  & (2,\dots, 2, 1, 1, 1)  & 4  & 32768  & 459  & 4096  & 9362  & 5026  & 512  & 1310  & 2048  \\
 8  & 2  & (2, 1, \dots, 1)  & (2, \dots, 2, 1, 1)  & 4  & 65536  & 799  & 4096  & 9362  & 5026  & 1024  & 2427  & 4096  \\
 8  & 2  & (2, 2, 1, \dots, 1)  & (2, 2, 2, 1, \dots, 1)  & 4  & 32768  & 467  & 16384  & 32768  & 18037  & 512  & 1213  & 1170  \\
 8  & 2  & (2, 2, 1, \dots, 1)  & (2, 2, 2, 2, 1, \dots, 1)  & 4  & 65536  & 819  & 16384  & 33825  & 18037  & 1024  & 2259  & 2340  \\
 9  & 2  & (2, 1, \dots, 1)  & (2, 1, \dots, 1)  & 4  & 4096  & 107  & 4096  & 4096  & 4096  & 128  & 227  & 256  \\
 9  & 2  & (2, 1, \dots, 1)  & (2, \dots, 2, 1, 1, 1, 1)  & 4  & 65536  & 924  & 16384  & 33825  & 18037  & 1024  & 2520  & 4096  \\
 9  & 2  & (2, 2, 1, \dots, 1)  & (2, 2, 2, 1, \dots, 1)  & 4  & 65536  & 936  & 65536  & 65536  & 65412  & 1024  & 2340  & 2340  \\
 10  & 2  & (2, 1, \dots, 1)  & (2, 1, \dots, 1)  & 4  & 8192  & 204  & 8192  & 8192  & 8192  & 256  & 431  & 512  \\
 10  & 2  & (2, 1, \dots, 1)  & (2, 2, 2, 2, 1, \dots, 1)  & 4  & 65536  & 1013  & 65536  & 65536  & 65412  & 1024  & 2621  & 4096  \\
 10  & 2  & (2, 2, 1, \dots, 1)  & (2, 2, 1, \dots, 1)  & 4  & 65536  & 1008  & 65536  & 65536  & 65536  & 1024  & 2427  & 2340  \\
 11  & 2  & (2, 1, \dots, 1)  & (2, 1, \dots, 1)  & 4  & 16384  & 384  & 16384  & 16384  & 16384  & 512  & 819  & 1024  \\
 12  & 2  & (2, 1, \dots, 1)  & (2, 1, \dots, 1)  & 4  & 32768  & 738  & 32768  & 32768  & 32768  & 1024  & 1560  & 2048  \\
 13  & 2  & (2, 1, \dots, 1)  & (2, 1, \dots, 1)  & 4  & 65536  & 1390  & 65536  & 65536  & 65536  & 2048  & 2978  & 4096  \\
\hline
    \end{array}
    \]
    \caption{Sum-rank-metric graphs on up to $10^5$ vertices that cannot contain an MSRD code, as shown by computing our Ratio-Type bounds. Existence cannot be excluded
    by applying the known bounds
    from Theorems~\ref{thm:induced} and~\ref{thm:non-induced}.}
    \label{tab:no_msrd}
\end{table}

\medskip

\subsection*{Acknowledgements}
We thank Alexander Gavrilyuk for his careful reading of this manuscript and suggestions.
Aida Abiad is partially supported by the Dutch Research Council via grants VI.Vidi.213.085, OCENW.KLEIN.475, and by the Research Foundation -- Flanders via grant 1285921N.
Antonina P. Khramova is supported by the Dutch Research Council via grant OCENW.KLEIN.475. Alberto Ravagnani is supported by the Dutch Research Council via grants VI.Vidi.203.045, OCENW.KLEIN.539,
and by the Royal Academy of Arts and Sciences of the Netherlands.

\bigskip

\bibliographystyle{abbrv}
\bibliography{references}

\end{document}